\newtheorem{theorem}{Theorem}[section]
\newtheorem{definition}[theorem]{Definition}
\newtheorem{lemma}[theorem]{Lemma}
\newtheorem{proposition}[theorem]{Proposition}
\newtheorem{corollary}[theorem]{Corollary}
\newtheorem{example}[theorem]{Example}
\newtheorem{remark}[theorem]{Remark}
\newcommand{\R}{\mathbb{R}} 
\newcommand{\X}{\mathbf{X}}
\newcommand{\Y}{\mathbf{Y}}
\newcommand{\B}{\mathbf{B}}
\newcommand{\LL}{\mathcal{L}}
\title{Rough linear PDE's with discontinuous coefficients - existence of solutions via regularization by fractional Brownian motion}
\author{Torstein Nilssen \thanks{Department of Mathematics, University of Oslo, Moltke Moes vei 35, P.O. Box 1053 Blindern, 0316 Oslo, Norway.
E-mail: torsteka@math.uio.no. 
Funded by  Norwegian Research Council (Project 230448/F20).} }
\begin{document}

\date{}

\maketitle

\begin{abstract}
We consider two related linear PDE's perturbed by a fractional Brownian motion. We allow the drift to be discontinuous, in which case the corresponding deterministic equation is ill-posed. However, the noise will be shown to have a regularizing effect on the equations in the sense that we can prove existence of solutions for almost all paths of the fractional Brownian motion.

\bigskip

MSC Classification Numbers: 60H15, 60H10, 60G22, 60H05, 60J55

Key words: Rough paths, Stochastic PDEs, regularization by noise, local times, fractional Brownian motion.

\end{abstract}


\section{Introduction}

In this paper we study examples of the so called regularization by noise phenomenon for a class of linear equations perturbed by fractional Brownian motion. In short, this is the name given to the phenomenon that occurs when ill-posed deterministic equations becomes well-posed by adding stochastic terms.

More specifically, assume $b \in L^1(\R^d; L^{\infty}([0,T];\R^d)) \cap L^{\infty}( [0,T] \times \R^d ;\R^d )$ is a given function and let $B^H$ be a $d$-dimensional fractional Brownian motion (fBm). In this paper we will study two different but related linear stochastic PDE's.

The stochastic transport equation is the equation
\begin{align} \label{firstTransportEq}
\partial_t u(t,x) + b(t,x) \cdot \nabla u(t,x) + c(t,x) u(t,x) +  \nabla u(t,x) \cdot \dot{B}_t^H  = 0, \hspace{0.6cm} u(0,x)  = u_0(x)
\end{align}
where $u_0 \in C^1_b(\R)$, and we allow $c$ to be a distribution. In particular, we shall assume that $c$ is the distributional derivative of a bounded function. 

In the case that $c = div(b)$ this is called the continuity equation which we also may define as the measure valued equation
\begin{align} \label{firstContinuityEq}
\partial_t \mu_t + \textrm{div}(b \mu_t ) + \textrm{div} (\mu_t  \dot{B}_t^H) & = 0, \hspace{0.6cm} \mu|_{t=0}  = \mu_0
\end{align}
where $\mu_0 $ is a given measure. We see that $u(t,x)$ is equal to the Radon-Nykodim derivative of $\mu_t$ w.r.t. Lebesgue measure.

Both equations are related to the stochastic ordinary equation

\begin{equation} \label{flowSDE}
\phi_t(x) = x + \int_0^t b(r,\phi_r(x)) dr + B_t^H ,
\end{equation}
in the sense that the push-forward $\mu_t := (\phi_t)_{\sharp} \mu_0$ solves the continuity equation \eqref{firstContinuityEq} and the composition $u(t,x) := u_0(\phi_t^{-1}(x)) \exp\{ - \int_0^t c(r,\phi_r( \phi_t^{-1}(x))dr  \} $ solves the transport equation \eqref{firstTransportEq}.

This means that if we can show the regularization effect of the fBm on (\ref{flowSDE}) we can solve the corresponding stochastic PDE's as indicated above.

Both equations involves terms on the form $Y_t \dot{B}_t^H$, but we know that the fBm is $P$-a.s. not differentiable so one should integrate the equations in time to produce terms on the form $\int_0^t Y_s dB_s^H$. But even at this stage there is ambiguity. Indeed, since for $H \neq \frac{1}{2}$ the fBm is not a semi-martingale there is no It\^{o}-theory to make sense of this integral. Moreover, to enjoy the regularization effect of fBm on (\ref{flowSDE}) we need to have $H < \frac{1}{2}$. Since the trajectories of $B^H$ are $P$-a.s. H\"{o}lder continuous with exponent strictly smaller than $H$, and the solutions themselves cannot be expected to have higher regularity, also the integration theory by Young is out of reach for these equations.

As the title of the paper suggest, we shall interpret the integrals in the Rough Path setting, meaning we will use the iterated integrals of $B^H$ and the theory of controlled paths to give meaning to these integrals.

We will discuss the equations separately. For notational simplicity we write $B$ for the fBm.

\subsection{The stochastic continuity equation}

Integrating the continuity equation in time, and assuming we have the above mentioned integration theory, we get
\begin{equation} \label{CEWeakIntegralForm}
\mu_t + \int_0^t \textrm{div}(b \mu_s) ds + \int_0^t \textrm{div}( \mu_s dB_s) = \mu_0
\end{equation}
regarded as a measure valued equation, namely for every $\eta \in C^{\infty}_c(\R^d)$
$$
\mu_t ( \eta ) = \mu_0(\eta) + \int_0^t \mu_s ( b(s,\cdot) \cdot  \nabla \eta  )ds  +  \int_0^t  \mu_s ( \nabla \eta \cdot  dB_s )
$$
where $\mu_t(\eta) := \int_{\R^d} \eta(x) d\mu_t(x)$, $\mu_t ( \nabla \eta) = (\mu_t ( \partial_{x_1} \eta), \dots, \mu_t(\partial_{x_d} \eta))$ and $\cdot $ is the dot-product on $\R^d$.

We will show that the solution is on the form $\mu_t = (\phi_t)_{\sharp}\mu_0$. To see this, heuristically, take $\eta \in C^{\infty}_c(\R^d)$ and suppose we have some kind of It\^{o}-Stratonovich-formula for the fractional Brownian motion in the rough path setting. We should have
$$
\eta(\phi_t(x)) = \eta(x) + \int_0^t  \nabla \eta(\phi_r(x)) \cdot  b(r,\phi_r(x))  dr + \int_0^t  \nabla \eta(\phi_r(x)) \cdot  dB^H_r  .
$$
Integrating w.r.t. $\mu_0$ produces the desired formula provided we can use integration by parts.

The authors in \cite{BNP} show existence of a unique solution to (\ref{flowSDE}) and the results will be included in Section \ref{fBmSDEChapter}.

\subsection{The stochastic transport equation}

Integrating the linear transport equation in time gives
\begin{equation} \label{TEWeakIntegralForm}
u(t,x) + \int_0^t  b(s,x) \cdot  \nabla u(s,x) ds + \int_0^t c(s,x) u(s,x)ds + \int_0^t  \nabla u(s,x) \cdot  dB_s   = u_0(x) .
\end{equation}
It is well known that the corresponding deterministic equation might develop discontinuities when $b$ is irregular. Moreover, a weak formulation of the deterministic equation is not straightforward. Integrating against a test function $\eta \in C^{\infty}_c(\R)$, we see that the term $\int_{\R} b(t,x) \cdot \nabla u(t,x) \eta(x) dx$ does not allow for integration by parts unless there is some regularity on $b$. We will choose the noise in such a way that the solution is weakly differentiable, thus circumventing integration by parts. Notice however, that we will still use a (spatially) weak formulation of the equation.

The linear transport equation has been studied extensively. When the noise term is removed, Di Perna and Lions \cite{DiPernaLions}, showed that when $b \in L^1([0,T] ; W^{1,1}_{loc}(\R^d))$ with linear growth and $divb \in L^1([0,T] \times \R^d)$, a weak solution exists. Notice that the regularity restrictions on $b$ is needed in order to make a definition of a solution as indicated above.

The stochastic version driven by Brownian motion with Stratonovich formulation, i.e. $\int_0^t \nabla u(s,x) \circ dB_s$, has also received some attention. 
We mention the results in \cite{FedrizziFlandoli} and \cite{MNP}, developed simultaneously and independently using two somewhat different techniques.

An approach of using rough paths for regularization by noise was used in \cite{Catellier}, building on \cite{CatellierGubinelli}. The techniques of \cite{Catellier} and \cite{CatellierGubinelli} are similar in spirit to the present paper in the sense that they both use calculation on the occupation measures. The main advantage of \cite{Catellier} and \cite{CatellierGubinelli} is that they offer a more defined separation between the probabilistic considerations and the analysis of the involved ODE and PDE's, thus making the approach suitable for different types of driving noise. In the present paper one needs to carefully keep track of $P$-null sets because many of the estimates are only shown to be true under expectation. On the other hand it gives some flexibility since some of the expressions are semi explicit via the local time.

The paper \cite{Catellier} consider drifts for which $divb \in L^{\infty}([0,T] \times \R^d)$, and allow for linear growth. When $d=1$ this is restricts to (locally) Lipschitz drift, but when $d > 1$ this condition is much weaker than Lipschitz. Another difference from the current paper is that \cite{Catellier} considers $H \in (\frac{1}{3}, \frac{1}{2})$. For the technique in the current paper to work, we \emph{need} to have $H < \frac{1}{3}$ which makes the rough path theory a bit more involved.

The main advantage of the technique of the present paper is that the solution can easily be seen to be smoother in space, so that there is no need for integration by parts on the drift term, which is the reason for restricting to bounded divergence on $b$ in \cite{Catellier}.

In addition, we include a part where $d=1$ where the proof is much simpler. The proof is based on a local-time technique that was introduced in \cite{Nilssen} to study the Stochastic Heat Equation.


\subsection{Notation}

For Banach spaces $V,W$ we denote $\mathcal{L}(V;W)$ the set of all continuous linear mappings from $V$ to $W$. For simplicity we denote $\mathcal{L}(V) := \mathcal{L}(V;\R)$. If the spaces $V$ and $W$ are finite dimensional, and we can identify $\mathcal{L}(V \otimes W) $ with $\mathcal{L}(V;\LL(W))$.
In particular, for a sufficiently smooth function $f:V \rightarrow W$ the $k$'th derivative is considered as a map $\nabla ^kf : V \rightarrow \LL(V^{\otimes k};W)$.


For $T > 0$ define the simplex $\Delta^{(n)}(s,t) := \{ (r_1,\dots,r_n) \in [0,T]^n : s < r_1 <\dots < r_n < t\}$. For $\gamma > 0 $ denote by $C_2^{\gamma}([0,T];V)$ the space of all functions $f: \Delta^{(2)}(0,T) \rightarrow V$ such that $ \|f\|_{\gamma} := \sup_{s < t} \frac{|f(s,t)|}{|t-s|^{\gamma}} < \infty$. 
Given a function $X : [0,T] \rightarrow V$ its increment is denoted $X_{st} := X_t - X_s$.

For an integer $p$ the $p$-step truncated tensor algebra 
$$
T^{(p)}(\R^d) := \bigoplus_{n=0}^p (\R^d)^{\otimes n}
$$
is equipped with the product $(\mathbf{a} \otimes \mathbf{b})^{(n)} = \sum_{k=0}^n a^{(n-k)} \otimes b^{(k)}.$

We recall the following Taylor formula for a function $f : V \rightarrow W$ that is $m+1$ times differentiable
\begin{equation} \label{taylor3}
f(x) - f(y) = \sum_{k=1}^m \frac{\nabla^kf(y)}{k!} (x-y)^{\otimes k}  +  R^{f}_m(x,y)
\end{equation}
where $|R^f_m(x,y)| \lesssim |x-y|^{m+1}$. More specifically, we shall use the explicit formula
\begin{equation} \label{explicitTaylorRemainder}
R^f_m(x,y) = \frac{1}{m!} \int_0^1 \nabla^{m+1}f(y + u(x-y))(1-u)^m du (x-y)^{\otimes (m+1)}.
\end{equation}

\bigskip

We shall frequently use the space $L^1(\R^d; L^{\infty}([0,T]; \R^d))$ with norm denoted by
$$
\|f \|_{\infty, 1} := \int_{\R^d} \|f( \cdot ,x) \|_{L^{\infty}([0,T];\R^d)} dx. 
$$
For simplicity the norm in the space $L^{\infty}([0,T] \times \R^d; \R^d))$ will be denoted $\| \cdot \|_{\infty}$.



\section{Elements of Controlled Rough Paths} \label{ElementsOfControlledRoughPaths}

The theory of rough paths was first introduced by Terry Lyons in the late 90's, see \cite{Lyons98}. The insight of this work is that even though solutions to ODE's driven by rough signals are typically not continuous as a function of the signals themselves, adding extra information, namely the iterated integrals of the driving signals, one obtains a topology for which there is continuity of the solutions. The theory was further developed by Gubinelli, \cite{Gubinelli04} and \cite{Gubinelli10}, who introduced the notion of controlled paths which defines spaces that are well suited for constructing solutions of the rough ODE's.
In the present paper we shall use controlled paths as one of our main tools. See \cite{FrizHairer} for an introduction.

Throughout this section we fix some $\gamma \in (0, \frac{1}{2})$ and let $p$ be the integer part of $\frac{1}{\gamma}$. A $\gamma$-rough path is a mapping
\begin{align*}
\X: \Delta^{(2)}(0,T) & \rightarrow T^{(p)}(\R^d)  \\ 
(s,t) & \mapsto  (1, X_{st}^{(1)}, \dots , X_{st}^{(p)}) 
\end{align*}
that satisfies an algebraic (Chen's) relation
\begin{equation} \label{Chen}
\X_{st} = \X_{su} \otimes \X_{ut} ,
\end{equation}
and an analytic relation
\begin{equation} \label{HolderRelation}
|X_{st}^{(n)}| \lesssim |t-s|^{n\gamma} \hspace{1cm} n=1, \dots, p .
\end{equation}

We denote by $\mathscr{C}^{\gamma}$ the set of all rough paths equipped with the metric
$$
\varrho_{\gamma}(\X,\tilde{\X}) := \sum_{n=1}^p \sup_{t \neq s} \frac{|X_{st}^{(n)} -\tilde{X}^{(n)}_{st}|}{|t-s|^{n\gamma}} .
$$

Given a function $X \in C^1([0,T];\R^d)$ we can consider its canonical lift to a rough path
\begin{equation} \label{smoothLift}
\X_{st} := (1, X_{st}, \int_s^t X_{sr} \otimes \dot{X}_r dr , \dots, \int_{\Delta^{(p)}(s,t)} \dot{X}_{r_1} \otimes  \dots \otimes \dot{X}_{r_p} dr_1 \dots dr_p ).
\end{equation}

We denote by $\mathscr{C}^{\gamma}_g$ the closure of the canonical lift of $ C^1([0,T];\R^d)$ in the rough path topology
\footnote{sometimes written $\mathscr{C}_g^{0, \gamma}$ in the literature, whereas $\mathscr{C}^{\gamma}_g$ is reserved for paths satisfying (\ref{symmetryCondition}). While $\mathscr{C}_g^{0, \gamma}$ is strictly included in $\mathscr{C}_g^{ \gamma}$ one can use ``geodesic approximations'' and interpolation to show $\mathscr{C}_g^{ \gamma '} \subset \mathscr{C}_g^{0, \gamma} \subset \mathscr{C}_g^{\gamma} $ for $\gamma ' < \gamma$, so that one can still approximate elements satisfying (\ref{symmetryCondition}) at the expense of choosing a smaller $\gamma$.} .
An element $\X \in \mathscr{C}^{\gamma}_g$ will be referred to as a geometric rough path and it satisfies the identity
\begin{equation} \label{symmetryCondition}
\textrm{sym}(X_{st}^{(n)}) = \frac{1}{n!} \left( X_{st}^{(1)} \right)^{\otimes n}.
\end{equation}

Given a rough path $\X \in \mathscr{C}^{\gamma}$, we shall say that a mapping
\begin{align*}
\Y : [0,T]  & \longrightarrow \bigoplus_{n=1}^p \mathcal{L}( (\R^d)^{\otimes n}) \\
t & \longmapsto (Y^{(1)}_t, \dots Y^{(p)}_t) 
\end{align*}
is a controlled (by $\X$) path if the functions
$$
Y^{(k) \sharp}_{st} := Y^{(k)}_t - \sum_{n=k}^{p} Y_s^{(n)} X^{(n-k)}_{st} \hspace{0.4cm} k=1, \dots, p
$$
are such that $Y^{(k) \sharp} \in C^{(p+1-k)\gamma}_2([0,T]; \LL((\R^d)^{\otimes k})$, i.e.
\begin{equation} \label{controlledHolder}
|Y^{(k) \sharp}_{st}| \lesssim |t-s|^{(p+1-k) \gamma} .
\end{equation}

We denote by $\mathscr{D}^{p\gamma}_{\X}$ the set of all paths controlled by $\X$, and we equip this linear space with the semi-norm
$$
\|\Y\|_{\X} = \sum_{k=1}^{p} \|Y^{(k) \sharp}\|_{(p+1-k)\gamma} .
$$
Conditioned on $(Y^{(1)}_0, \dots, Y^{(p)}_0)$ we get a norm which controls the $\|\cdot\|_{\infty}$-norm of $\Y$ in the following way. We have  $Y_{t}^{(k) } = Y_{0t}^{(k) \sharp} + \sum_{n=k}^{p} Y_0^{(n)} X_{0t}^{(n-k)}$ so that 
\begin{align*}
\|Y^{(k)}\|_{\infty} & \leq T^{(p+1-k)\gamma} \|Y^{(k) \sharp} \|_{(p+1-k) \gamma} + \sum_{n=k}^{p} |Y_0^{(n)}| \|X\|^{(n-k)\gamma}T^{(n-k)\gamma} \\
 & \lesssim  \| \Y\|_{\X} + \varrho_{\gamma}(0,\X) |\Y_0| .
\end{align*}

If we consider two paths $\Y$ and $\tilde{\Y}$, controlled by  $\X$ and $\tilde{\X}$ respectively, we introduce the ``distance''
$$
\| \Y; \tilde{\Y} \|_{\X, \tilde{\X}} := \sum_{k=1}^{p} \| Y^{(k) \sharp} - \tilde{Y}^{(k) \sharp}\|_{(p+1-k) \gamma} .
$$
Similar as above have the following estimate
\begin{align} \label{supnormVSgammanorm}
\max_{n = 1, \dots , p} \| Y^{(n)} - \tilde{Y}^{(n)} \|_{\infty} & \leq \| \Y ; \tilde{\Y}\|_{\X, \tilde{\X}} + \varrho_{\gamma}(\X,0) |\Y_0 - \tilde{\Y}_0|  + \varrho_{\gamma}(\X, \tilde{\X}) |\tilde{\Y}_0| \notag .
\end{align}

We define the total space
$$ 
\mathscr{C}^{\gamma} \ltimes \mathscr{D}^{p \gamma} := \bigsqcup_{\X \in \mathscr{C}^{\gamma}} \{ \X \} \times \mathscr{D}^{p \gamma}_{\X}
$$
equipped with its natural topology, i.e. the weakest topology such that
\begin{align*}
\mathscr{C}^{\gamma} \ltimes \mathscr{D}^{p \gamma} & \longrightarrow \mathscr{C}^{\gamma} \times \bigoplus_{k=1}^{p} C_2^{(p+1-k) \gamma} ([0,T]; \LL((\R^d)^{\otimes k}))\\
(\X,\Y) & \longmapsto \left(\X, \oplus_{k=1}^{p} Y^{(k) \sharp} \right)
\end{align*}
is continuous.

If $f$ is a scalar valued function with higher H\"{o}lder regularity, i.e. $|f_{st}| \lesssim |t-s|^{\beta}$ for some $\beta \geq p\gamma$ and we take a controlled path $\Y \in \mathscr{D}_{\X}^{p\gamma}$ we can define a new controlled path $f\Y$.

\begin{lemma} \label{scalarMultiplication}
The mapping 
\begin{align*}
C^{\beta} \times \mathscr{D}_{\X}^{p\gamma} & \rightarrow \mathscr{D}_{\X}^{p\gamma} \\
(f,\Y) & \mapsto (fY^{(1)}, \dots, fY^{(p)})
\end{align*}
is bilinear and continuous when $\beta \geq p\gamma$.

\end{lemma}

\begin{proof}
To see that the mapping is well defined is sufficies to notice that 
$$
(fY)^{(k) \sharp}_{st} = f_{st} Y_t^{(k)} + f_s Y^{(k) \sharp}_{st}
$$
satisfies the required time-regularity when $ \beta \geq p \gamma$. To see continuity of this map we can similarly write
\begin{align*}
|(fY)^{(k) \sharp}_{st} - (\tilde{f} \tilde{Y})^{(k) \sharp}_{st}| &  \leq |t-s|^{\beta} \|f - \tilde{f}\|_{\beta}  \|Y^{(k)}\|_{\infty} \\
 & + |t-s|^{(p+1 - k)\gamma} \|f - \tilde{f} \|_{\infty} \|Y^{(k) \sharp} - \tilde{Y}^{(k) \sharp} \|_{(p+1-k)\gamma} .
\end{align*}
\end{proof}

\subsection{Integration of Controlled Rough Paths}

Following \cite{FrizHairer} we denote by $C_2^{\alpha, \beta}([0,T])$ the space of functions $\Xi : \Delta^{(2)}(0,T) \rightarrow \R$ such that 
$$
\| \Xi \|_{\alpha} := \sup_{s <t } \frac{|\Xi_{st}|}{|t-s|^{\alpha}} < \infty \hspace{.2cm} \textrm{and} \hspace{.2cm}  \|\delta \Xi\|_{\beta} := \sup_{s < u < t} \frac{| \delta \Xi_{sut}|}{|t-s|^{\beta}} < \infty
$$
where $\delta \Xi_{sut} := \Xi_{st} - \Xi_{su} - \Xi_{ut}$. We equip the space with the semi-norm $\|\Xi\|_{\alpha, \beta} := \|\Xi\|_{\alpha} + \|\delta \Xi\|_{\beta}$. The following result is sometimes referred to as the ``sewing lemma'':
\begin{lemma} \label{sewingLemma}
Assume $0< \alpha \leq 1 < \beta$. Then there exists a unique continuous linear map 
$$
\mathcal{I} : C^{\alpha, \beta}_2([0,T]) \rightarrow C^{\alpha}([0,T])
$$
such that $(\mathcal{I}\Xi)_0 = 0$ and 
$$
|(\mathcal{I}\Xi)_{st} - \Xi_{st}| \lesssim |t-s|^{\beta} .
$$

More specifically,
\begin{equation} \label{Idefinition}
\mathcal{I}(\Xi)_{st} = \lim_{|\mathcal{P}| \rightarrow 0} \sum_{[u,v] \in \mathcal{P}} \Xi_{uv}
\end{equation}
where $\mathcal{P}$ denotes a partition of $[s,t]$ and $|\mathcal{P}|$ its mesh. The limit can be taken along any sequence of partitions and is independent of this choice.
\end{lemma}

For a proof, see \cite{FrizHairer}. It is clear from (\ref{Idefinition})  that $C^{\theta}_2([0,T]) \subset ker(\mathcal{I})$ for $\theta > 1$.

We are ready to define the integral of a controlled rough path. For $\X \in \mathscr{C}^{\gamma}$ and $\Y \in \mathscr{D}_{\X}^{p \gamma}$ let 
$$
\Xi_{st} := \sum_{n=1}^p Y^{(n)}_s X^{(n)}_{st}  .
$$
Chen's relation (\ref{Chen}) gives $X_{st}^{(n)} = \sum_{k=0}^{n} X_{su}^{(n-k)} \otimes  X_{ut}^{(k)}$, so that 
\begin{align*}
\delta \Xi_{sut} & = \sum_{n=1}^p Y_s^{(n)} ( X_{st}^{(n)} - X_{su}^{(n)}) - \sum_{n=1}^p Y_u^{(n)}  X_{ut}^{(n)}  = \sum_{n=1}^p Y_s^{(n)} \sum_{k=1}^n X_{su}^{(n-k)} \otimes  X_{ut}^{(k)} - \sum_{n=1}^p Y_u^{(n)}  X_{ut}^{(n)} \\
& = \sum_{k=1}^p \sum_{n=k}^p Y_s^{(n)}  X_{su}^{(n-k)} \otimes X_{ut}^{(k)} - \sum_{k=1}^p Y_u^{(k)}  X_{ut}^{(k)}  = \sum_{k=1}^p \left( \sum_{n=k}^p Y_s^{(n)}  X_{su}^{(n-k)}   - Y_u^{(k)}  \right) X_{ut}^{(k)} \\
& = - \sum_{k=1}^p Y_{su}^{(k) \sharp} X_{ut}^{(k)} .
\end{align*}
From (\ref{HolderRelation}) and (\ref{controlledHolder}) each term can be bounded by $C|t-s|^{(p+1) \gamma}$ for an appropriate constant $C$. Consequently $| \delta \Xi_{sut} | \lesssim |t-s|^{(p+1) \gamma}$. Since $(p+1) \gamma > 1$  we arrive at the following definition:

\begin{definition}
Let $\X \in \mathscr{C}^{\gamma}$ and let $\Y \in  \mathscr{D}^{p\gamma}_{\X}$. We define the rough path integral of $\Y$ w.r.t. $\X$ as 
\begin{equation} \label{roughIntegralDef}
\int_s^t \Y_r d\X_r := (\mathcal{I}\Xi)_{st}
\end{equation}
with $\mathcal{I}$ and $\Xi$ as above.

\end{definition}

\begin{remark} \label{smoothIntegral}
For a smooth path $X$ with its geometric lift (\ref{smoothLift}) the rough path integral and the usual calculus coincide, i.e.
$$
\int_s^t Y_r d \X_r = \int_s^t Y_r \dot{X}_r dr,
$$
for all $Y \in C^{\gamma}([0,T]; \LL(\R^d))$. Indeed, we may define $Y^{(n)} = 0$ for $n=2, \dots p$. Even though in general (\ref{controlledHolder}) is not satisfied for $k=1$, if we define 
$$
\Xi_{st} := Y_s X_{st}
$$
we get $\delta \Xi_{sut} = -Y_{su}X_{ut}$ so that $\Xi \in C^{ 1,1 + \gamma}_2([0,T])$. 

\end{remark}

The rest of this section is devoted to obtaining a ``local Lipschitz''-type estimate when we regard the above as a mapping
$$
\mathscr{C}^{\gamma} \ltimes \mathscr{D}^{p\gamma} \rightarrow C^{\gamma, (p+1)\gamma}_{2}([0,T]).
$$
Indeed, let $\X, \tilde{\X} \in \mathscr{C}^{\gamma}$ and let $\Y$ and $\tilde{\Y}$ be controlled by $\X$ and $\tilde{\X}$ respectively. Define $\Xi$ as before and 
$$
\tilde{\Xi}_{st} := \sum_{n=1}^p \tilde{Y}_s^{(n)}\tilde{X}_{st}^{(n)}.
$$
\begin{lemma} \label{continuousIntegration}
Assume $\varrho_{\gamma}(0,\X), \|\Y\|_{\X},|\Y_0| \leq M$ for some constant $M$, and similarly for $\tilde{\X}$ and $\tilde{\Y}$. Then there exists a constant $C_M$ such that 
$$
\| \Xi - \tilde{\Xi} \|_{\gamma, (p+1)\gamma} \leq C_M( |\Y_0 - \tilde{\Y}_0 | + \|\Y ; \tilde{\Y} \|_{\X; \tilde{\X}} + \varrho_{\gamma}(\X, \tilde{\X})) .
$$
\end{lemma}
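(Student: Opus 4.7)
The plan is to split $\Xi-\tilde\Xi$ in the obvious telescoping way, estimate the two summands separately for the $\gamma$-norm and for the $(p+1)\gamma$-norm of the coboundary, and invoke the uniform bounds coming from the hypothesis plus the estimate (\ref{supnormVSgammanorm}).

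First I would write
\begin{align*}
\Xi_{st}-\tilde\Xi_{st}=\sum_{n=1}^p\bigl(Y_s^{(n-1)}-\tilde Y_s^{(n-1)}\bigr)X_{st}^{(n)}+\sum_{n=1}^p \tilde Y_s^{(n-1)}\bigl(X_{st}^{(n)}-\tilde X_{st}^{(n)}\bigr).
\end{align*}
For the $\gamma$-norm one uses $|X_{st}^{(n)}|\le \varrho_{\gamma}(0,\X)|t-s|^{n\gamma}$ (and the analogous bound for $\tilde X$) together with the $T$-factors $|t-s|^{(n-1)\gamma}\le T^{(n-1)\gamma}$ to pull out one factor $|t-s|^{\gamma}$. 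The prefactors are controlled in sup norm: $\|Y^{(n-1)}-\tilde Y^{(n-1)}\|_\infty$ is bounded by (\ref{supnormVSgammanorm}), and $\|\tilde Y^{(n-1)}\|_\infty$ by its unmarked analogue. Under the assumption that $\varrho_\gamma,\|\Y\|_\X,\|\Y_0\|,\ldots\le M$, both yield constants of the required form, leading to
\begin{align*}
\|\Xi-\tilde\Xi\|_\gamma \le C_M\bigl(\|\Y_0-\tilde\Y_0\|_{T^{(p-1)}(\R)}+\|\Y;\tilde\Y\|_{\X,\tilde\X}+\varrho_\gamma(\X,\tilde\X)\bigr).
\end{align*}

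For the second half I would reuse the identity derived just above the definition of the rough integral, namely
\[
\delta\Xi_{sut}=-\sum_{k=1}^p Y_{su}^{(k-1)\sharp}X_{ut}^{(k)},
\]
so that, writing $\delta(\Xi-\tilde\Xi)_{sut}$ as two telescoping pieces,
\begin{align*}
\delta(\Xi-\tilde\Xi)_{sut}=-\sum_{k=1}^p\Bigl[\bigl(Y_{su}^{(k-1)\sharp}-\tilde Y_{su}^{(k-1)\sharp}\bigr)X_{ut}^{(k)}+\tilde Y_{su}^{(k-1)\sharp}\bigl(X_{ut}^{(k)}-\tilde X_{ut}^{(k)}\bigr)\Bigr].
\end{align*}
Each summand is now bounded directly by the definitions: the $\sharp$-terms contribute $|u-s|^{(p-k+1)\gamma}$ and the $X$-terms contribute $|t-u|^{k\gamma}$, producing in total $|t-s|^{(p+1)\gamma}$. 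The first bracket is controlled by $\|\Y;\tilde\Y\|_{\X,\tilde\X}\cdot\varrho_\gamma(0,\X)\le M\|\Y;\tilde\Y\|_{\X,\tilde\X}$, and the second by $\|\tilde\Y\|_{\tilde\X}\cdot\varrho_\gamma(\X,\tilde\X)\le M\varrho_\gamma(\X,\tilde\X)$.

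Adding the two bounds gives the claim. There is no real obstacle; the only point requiring a little care is that the $n=1$ summand in the decomposition of $\Xi-\tilde\Xi$ carries only a single factor $|t-s|^{\gamma}$, so the prefactors there must be bounded in $\|\cdot\|_\infty$ (not in some H\"older norm), which is exactly what (\ref{supnormVSgammanorm}) delivers together with the uniform bound $M$. Everything else is multilinear bookkeeping of the type $\|AB-\tilde A\tilde B\|\le \|A-\tilde A\|\|B\|+\|\tilde A\|\|B-\tilde B\|$.
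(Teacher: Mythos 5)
Your proof is correct and follows essentially the same route as the paper: the same telescoping decomposition of $\Xi-\tilde\Xi$ (with the roles of the marked and unmarked factors merely swapped), the sup-norm control of the $Y$-prefactors via (\ref{supnormVSgammanorm}) for the $\gamma$-norm, and the identity $\delta\Xi_{sut}=-\sum_k Y_{su}^{(k-1)\sharp}X_{ut}^{(k)}$ telescoped and bounded by the $\sharp$-H\"older and rough-path norms for the $(p+1)\gamma$-part. Nothing further is needed.
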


\begin{proof}
We begin by decomposing
\begin{align*}
\Xi_{st} - \tilde{\Xi}_{st} & =  \sum_{n=1}^p Y_s^{(n)} X_{st}^{(n)} - \sum_{n=1}^p \tilde{Y}_s^{(n)}\tilde{X}_{st}^{(n)} \\
& = \sum_{n=1}^p Y_s^{(n)} (X_{st}^{(n)} - \tilde{X}_{st}^{(n)}) + \sum_{n=1}^p (Y_s^{(n)} - \tilde{Y}_s^{(n)})\tilde{X}_{st}^{(n)} 
\end{align*}
so that 
\begin{align*}
|\Xi_{st} - \tilde{\Xi}_{st}| & \leq   \sum_{n=1}^p \|Y^{(n)}\|_{\infty} \|X^{(n)} - \tilde{X}^{(n)}\|_{n\gamma}|t-s|^{n\gamma} \\
&  + \sum_{n=1}^p \|Y^{(n)} - \tilde{Y}^{(n)}\|_{\infty} \| \tilde{X}^{(n)}\|_{n \gamma} |t-s|^{n\gamma} \\
& \leq   |t-s|^{\gamma} \max_{n=1, \dots, p} \|Y^{(n)}\|_{\infty}  \varrho_{\gamma}(\X, \tilde{\X})   \\
&  + |t-s|^{\gamma} \varrho_{\gamma}(0, \tilde{\X}) \max_{n=1, \dots, p} \|Y^{(n)} - \tilde{Y}^{(n)}\|_{\infty}   .
\end{align*}

Using (\ref{supnormVSgammanorm}) we can find a constant $\tilde{C}_M$ such that  
\begin{align*}
\|\Xi - \tilde{\Xi} \|_{\gamma} &  \leq \tilde{C}_M ( \| \Y ; \tilde{\Y}\|_{\X, \tilde{\X}} + |\Y_0 - \tilde{\Y}_0| \\
& + \varrho_{\gamma}(\X, \tilde{\X}) ) .
\end{align*}
Similarly,
\begin{align*}
\delta \Xi_{sut} - \delta \tilde{\Xi}_{sut} & = -\sum_{n=1}^p Y_{su}^{(n) \sharp } X_{ut}^{(n)} + \sum_{n=1}^p \tilde{Y}_{su}^{(n) \sharp}\tilde{X}_{ut}^{(n)} \\
& = -\sum_{n=1}^p Y_{su}^{(n) \sharp} (X_{ut}^{(n)} - \tilde{X}_{ut}^{(n)}) + \sum_{n=1}^p (Y_{su}^{(n) \sharp} - \tilde{Y}_{su}^{(n)\sharp })\tilde{X}_{ut}^{(n)} 
\end{align*}
so that 
\begin{align*}
\| \delta (\Xi - \tilde{\Xi}) \|_{(p+1)\gamma}  & \leq  \sum_{n=1}^p \|Y^{(n) \sharp }\|_{(p+1-n)\gamma} \| X^{(n)} - \tilde{X}^{(n)}\|_{n\gamma} \\
& + \sum_{n=1}^p \| Y^{(n) \sharp} - \tilde{Y}^{(n)\sharp }\|_{(p+1-n)\gamma}  \|\tilde{X}^{(n)} \|_{n\gamma} \\
& \leq  M ( \varrho_{\gamma}(\X, \tilde{\X})  + \|\Y; \tilde{\Y}\|_{\X, \tilde{\X}} )  . 
\end{align*}
\end{proof}

\subsection{Controlling solutions of ODE's} \label{controllingODEs}
In this section we will show how to control solutions to ODE's perturbed by a rough path $\X \in \mathscr{C}^{\gamma}$. Fix a function $b \in C^1_b([0,T] \times \R^d;\R^d)$ and denote by $\phi_{\cdot}(x)$ the solution of the perturbed ODE 
\begin{equation} \label{roughODE}
\phi_t(x) = x + \int_0^t b(r,\phi_r(x)) dr + X_t .
\end{equation}


When there is no chance of confusion we shall denote the solution to (\ref{roughODE}) by $\phi_t$ for notational convenience. Notice that we shall later on be interested in $\phi_t$ as a function of $x$, but for this section we leave it fixed.

We have
\begin{align*}
\phi_{st}  = \int_s^t b(r,\phi_r)dr + X_{st}  =: R_{st}^{\phi} + X_{st} 
\end{align*}
where $|R_{st}^{\phi}| \lesssim |t-s|$ by the boundedness of $b$. Let $f \in C^{p}_b(\R^d;\R^d)$, so that we can view $\nabla^k f : \R^d \rightarrow \LL((\R^d)^{\otimes (k+1)})$. We shall lift the composition $f (\phi)$ to a controlled path in $\mathscr{D}^{p\gamma}_{\X}$.

\begin{lemma} \label{Lemma:FunctionComposition}
  Assume $\X$ is a geometric rough path. Then the mapping $s \mapsto (f(\phi_s), \dots, \nabla^{p-1} f(\phi_s))$ belongs to $\mathscr{D}^{p \gamma}_{\X}$, i.e. if we introduce the ad-hoc notation
$$
f(\phi)^{(k) \sharp}_{st}  := \nabla^{k-1} f(\phi_t) - \sum_{n=k}^{p} \nabla^{n} f(\phi_s) X_{st}^{(n-k)} \hspace{.5cm} k=1, \dots, p
$$
we have $f(\phi)^{(k) \sharp} \in C^{(p+1-k)\gamma}_2([0,T]; \LL((\R^d)^{\otimes (k+1)}))$.

\end{lemma}

\begin{proof}
  Begin by writing
  $$
  \phi_{st}^{\otimes n} = (R_{st}^{\phi} + X_{st})^{\otimes n} = \sum_{q=0}^n \binom{n}{q} \textrm{sym}( (R_{st}^{\phi})^{\otimes (n-q)} \otimes X_{st}^{\otimes q}) .
  $$ 
For a sufficiently smooth function $g : \R^d \rightarrow \LL( V)$ where $V$ is a finite-dimensional Banach space, we have from Taylor's formula 
\begin{align} \label{controlledRemainder}
\notag g(\phi_t) - g(\phi_s) & = \sum_{n=1}^m \frac{\nabla^n g (\phi_s)}{n!} (\phi_{st})^{\otimes n} + R^g_m(\phi_s, \phi_t) \\
& = \sum_{n=1}^m \nabla^n g(\phi_s) X_{st}^{(n)} + R^g_m(\phi_s, \phi_t)  \\
\notag & \hspace{.2cm} +  \sum_{n=1}^m \sum_{q=1}^n\binom{n}{q}  \frac{\nabla^ng(\phi_s)}{n!}    ( (R_{st}^{\phi})^{\otimes (n-q)} \otimes X_{st}^{\otimes q})  .
\end{align}
In the above we have used that $\X$ satisfies (\ref{symmetryCondition}) so that $\nabla^n g(\phi_s)\frac{X_{st}^{\otimes n}}{n!}=   \nabla^n g(\phi_s) X_{st}^{(n)}$ since $\nabla^ng$ only acts on symmetric tensors. Furthermore, the second term $ \lesssim |\phi_{st}|^{m+1} \lesssim |t-s|^{(m+1)\gamma}$, and the third term $ \lesssim |t-s|$. With $g = \nabla^{k} f$ and $m = p-k -1$ it follows that $f(\phi)^{(k) \sharp} \in C^{(p-k)\gamma}_2([0,T]; \LL((\R)^d)^{\otimes (k+1)})$, thus proving the lemma.
\end{proof}

\begin{corollary} \label{roughIntegralDefCor}
For $f \in C^{p}_b(\R^d;\R^d)$ we may define $\int f(\phi_r) d\X_r$ as the rough path integral of $f(\phi)$ w.r.t. $\X$ as in (\ref{roughIntegralDef}).
\end{corollary}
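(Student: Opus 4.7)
The statement is essentially an immediate consequence of the preceding lemma, so the plan is short: I will just observe that the lemma provides precisely the object that the definition of the rough integral requires.

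Concretely, the preceding lemma shows that the lift
$\Y_s := (\eta(\phi_s), \eta'(\phi_s), \dots, \eta^{(p-1)}(\phi_s))$
belongs to $\mathscr{D}^{p\gamma}_{\X}$, i.e.\ the remainders $\eta(\phi)^{(k)\sharp}$ satisfy the Hölder bound \eqref{controlledHolder}. This is the only hypothesis appearing in the definition of the rough integral in \eqref{roughIntegralDef}. Hence one simply sets
\[
\Xi_{st} := \sum_{n=1}^p \eta^{(n-1)}(\phi_s)\, X^{(n)}_{st},
\]
verifies, as was done in the paragraph preceding the definition of the rough integral, that $\delta \Xi_{sut} = -\sum_{k=1}^p Y^{(k-1)\sharp}_{su} X^{(k)}_{ut}$ and hence $|\delta \Xi_{sut}|\lesssim |t-s|^{(p+1)\gamma}$ with $(p+1)\gamma>1$, so that Lemma \ref{sewingLemma} applies with $\alpha=\gamma$ and $\beta=(p+1)\gamma$. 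Then by the definition \eqref{roughIntegralDef},
\[
\int_s^t \eta(\phi_r)\, d\X_r := (\mathcal{I}\Xi)_{st}
\]
is well defined.

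There is no real obstacle here; the only thing to be careful about is consistency of notation, namely that the ``$Y^{(n-1)}$'' appearing in \eqref{roughIntegralDef} corresponds to the Gubinelli derivatives $\eta^{(n-1)}(\phi_s)$ coming from the Taylor expansion in the proof of the preceding lemma, and that the bound on $\Xi$ does not require any additional regularity of $\eta$ beyond $\eta \in C^{\infty}_b(\R)$ (so that all derivatives up to order $p$ are bounded and the remainder $R^{\eta^{(k)}}_{p-k-1}$ in \eqref{controlledRemainder} is under control). Once this identification is made the corollary is merely a renaming, and there is nothing further to prove.
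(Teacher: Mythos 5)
Your proposal is correct and follows exactly the route the paper intends: the preceding lemma gives that $\eta\circ\phi$ (with Gubinelli derivatives $\eta^{(k)}(\phi)$) lies in $\mathscr{D}^{p\gamma}_{\X}$, and the corollary is then an immediate application of the definition (\ref{roughIntegralDef}) via the sewing lemma, which is why the paper states it without further proof. Nothing is missing.
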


\subsection{Stability w.r.t. the driving path} \label{stabilityODEs}

The purpose of this section is to prove local Lipschitz continuity of the mapping 
\begin{align*}
\mathscr{C}^{\gamma} & \rightarrow \mathscr{C}^{\gamma} \ltimes \mathscr{D}^{p \gamma} \\
\X & \mapsto (\X, f (\phi)) 
\end{align*}
where $\phi$ is the solution to (\ref{roughODE}), $ f \in C^{p}_b(\R^d;\R^d)$ and $f (\phi)$ denotes the lift as described in the previous section. We begin with some trivial bounds, namely let $\tilde{\X} \in \mathscr{C}^{\gamma}$ and denote by $\tilde{\phi}$ the solution to (\ref{roughODE}) when we replace $\X$ by $\tilde{\X}$, i.e.
\begin{align*}
\tilde{\phi}_{st} & = \int_s^t b(r,\tilde{\phi}_r)dr + \tilde{X}_{st}  =: R^{\tilde{\phi}}_{st} + \tilde{X}_{st} .
\end{align*}

One can check that (see \cite{Catellier}, Lemma A.7) 
\begin{equation} \label{flowStability}
\| \phi - \tilde{\phi} \|_{\gamma} \leq C(T, \nabla b) \| X - \tilde{X} \|_{\gamma} .
\end{equation}
Clearly this implies $\| \phi - \tilde{\phi} \|_{\gamma} \lesssim \varrho_{\gamma}(\X, \tilde{\X})$ and also $\| R^{\phi} - R^{\tilde{\phi}}\|_{\gamma} \lesssim \varrho_{\gamma}(\X, \tilde{\X})$.

It follows that $ \| \phi^{\otimes n} - \tilde{\phi}^{\otimes n} \|_{n\gamma} \lesssim \varrho_{\gamma}(\X, \tilde{\X})$ by induction: assume this holds for $n-1$. Then
\begin{align*}
|\phi_{st}^{\otimes n} - \tilde{\phi}_{st}^{\otimes n}| & \leq |\phi_{st}^{ \otimes (n-1)}| |\phi_{st} - \tilde{\phi}_{st}| + |\phi_{st}^{\otimes (n-1)} - \tilde{\phi}_{st}^{\otimes (n-1)}| |\tilde{\phi}_{st}| \\
 & \leq 2 |t-s|^{n\gamma} \varrho_{\gamma}(\X, \tilde{\X})
\end{align*}
by the induction hypothesis combined with (\ref{flowStability}).


The main result of this section is the following.

\begin{lemma} \label{localLipschitz}
Assume $\varrho_{\gamma}(\X,0),\varrho_{\gamma}(\tilde{\X},0)  \leq M$ and $f \in C^{p}_b(\R^d;\R^d)$. Then there exists a constant $C_M$ such that
$$
\| f (\phi) ; f ( \tilde{\phi} )\|_{\X, \tilde{\X}} \leq C_M \varrho_{\gamma}(\X, \tilde{\X}) .
$$
\end{lemma}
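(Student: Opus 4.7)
The plan is to bound each of the $p$ contributions to $\|\eta\circ\phi;\eta\circ\tilde\phi\|_{\X,\tilde\X}$ directly from the explicit expression (\ref{controlledRemainder}) for the controlled remainders. Fix $k\in\{0,\dots,p-1\}$ and split
$$(\eta\circ\phi)^{(k)\sharp}_{st} - (\eta\circ\tilde\phi)^{(k)\sharp}_{st} = \Delta P^{(k)}_{st} + \Delta R^{(k)}_{st},$$
where $\Delta P^{(k)}$ is the difference of the polynomial-type double sums $\sum_{n,q}\tfrac{1}{q!}\eta^{(k+n)}(\phi_s)X^{(n-q)}_{st}(R^\phi_{st})^q$ and their tilded analogues, and $\Delta R^{(k)}$ is the difference of the Taylor remainders $R^{\eta^{(k)}}_{p-k-1}(\phi_s,\phi_t)$. (The initial values $\Y_0$ and $\tilde\Y_0$ agree, so only the remainders enter the seminorm.)

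For $\Delta P^{(k)}$, I apply the identity $abc-\tilde a\tilde b\tilde c=(a-\tilde a)bc+\tilde a(b-\tilde b)c+\tilde a\tilde b(c-\tilde c)$ to each summand. The three resulting pieces are controlled by: $|\eta^{(k+n)}(\phi_s)-\eta^{(k+n)}(\tilde\phi_s)|\lesssim|\phi_s-\tilde\phi_s|\lesssim\varrho_\gamma(\X,\tilde\X)$, using $\eta\in C^\infty_b$ and (\ref{flowStability}); $|X^{(n-q)}_{st}-\tilde X^{(n-q)}_{st}|\leq|t-s|^{(n-q)\gamma}\varrho_\gamma(\X,\tilde\X)$ by the very definition of $\varrho_\gamma$; and $|(R^\phi_{st})^q-(R^{\tilde\phi}_{st})^q|\lesssim|t-s|^q\varrho_\gamma(\X,\tilde\X)$, which follows from $|R^\phi_{st}|,|R^{\tilde\phi}_{st}|\leq\|b\|_\infty|t-s|$ together with $|R^\phi_{st}-R^{\tilde\phi}_{st}|\leq\|b'\|_\infty\|\phi-\tilde\phi\|_\infty|t-s|\lesssim\varrho_\gamma(\X,\tilde\X)|t-s|$. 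Each summand therefore carries a factor $|t-s|^{(n-q)\gamma+q}$, and since $\gamma<1/2$, $q\geq 1$, and $n\leq p-k-1$ one verifies $(n-q)\gamma+q\geq n\gamma+(1-\gamma)\geq(p-k)\gamma$. This gives $|\Delta P^{(k)}_{st}|\lesssim|t-s|^{(p-k)\gamma}\varrho_\gamma(\X,\tilde\X)$.

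For $\Delta R^{(k)}$, I use the integral form of the Taylor remainder to write
$$R^{\eta^{(k)}}_{p-k-1}(a,b)=G(a,b)(b-a)^{p-k},\quad G(a,b):=\int_0^1\tfrac{(1-\theta)^{p-k-1}}{(p-k-1)!}\,\eta^{(p)}\bigl(a+\theta(b-a)\bigr)\,d\theta,$$
so that $G$ is bounded and Lipschitz in both arguments. Decomposing
$$\Delta R^{(k)}_{st} = G(\phi_s,\phi_t)\bigl[\phi_{st}^{p-k}-\tilde\phi_{st}^{p-k}\bigr] + \bigl[G(\phi_s,\phi_t)-G(\tilde\phi_s,\tilde\phi_t)\bigr]\tilde\phi_{st}^{p-k},$$
the first bracket is $\lesssim|t-s|^{(p-k)\gamma}\varrho_\gamma(\X,\tilde\X)$ by the inductive bound $\|\phi^n-\tilde\phi^n\|_{n\gamma}\lesssim\varrho_\gamma(\X,\tilde\X)$ from Section \ref{stabilityODEs}, while the second is of the same order by the Lipschitz property of $G$, estimate (\ref{flowStability}), and $|\tilde\phi_{st}|\lesssim|t-s|^\gamma$. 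Summing the two bounds over $k=0,\dots,p-1$ yields the claim, with $C_M$ depending on $M,T,\|b\|_\infty,\|b'\|_\infty$ and on the sup-norms of the first $p$ derivatives of $\eta$. There is no conceptually hard step; the only delicate point is the bookkeeping of H\"{o}lder exponents in $\Delta P^{(k)}$, where each mixed product of $X^{(n-q)}$ and $(R^\phi)^q$ must carry exponent at least $(p-k)\gamma$, and this is precisely where the standing assumption $\gamma<1/2$ is invoked.
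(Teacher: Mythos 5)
Your proposal is correct and follows essentially the same route as the paper: it works from the explicit remainder formula (\ref{controlledRemainder}), bounds the polynomial double sum by repeated product-difference estimates using (\ref{flowStability}) and the boundedness of $b$, and splits the Taylor remainder difference into exactly the two terms the paper uses (your $G$ is just the paper's integral factor written out). The only cosmetic difference is that you track the exponent $(n-q)\gamma+q$ explicitly (where the operative fact is $p\gamma\le 1$ from the definition of $p$, rather than $\gamma<1/2$ per se), whereas the paper bounds those terms crudely by $|t-s|\,\varrho_{\gamma}(\X,\tilde{\X})$; both yield the required $(p-k)\gamma$ rate.
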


\begin{proof}
We shall use the formula (\ref{controlledRemainder}) to show that $\| f(\phi)^{(k) \sharp}  - f(\tilde{\phi})^{(k) \sharp} \|_{(p-k) \gamma} \leq C_M \varrho_{\gamma}(\X, \tilde{\X})$, which will prove the claim. To this end for a function $g$ smooth enough, we have that the remainder term of the Taylor expansion satisfies
\begin{align*}
 R^g_m(\phi_s, \phi_t) - R^g_m(\tilde{\phi}_s,\tilde{\phi}_t ) & = \int_0^1 \frac{(1-r)^{m+1}}{ m!} \nabla^{m+1}g(\phi_s + r \phi_{st}) dr \left( \phi_{st}^{\otimes (m+1)} - \tilde{\phi}_{st}^{ \otimes (m+1)} \right)  \\
& +  \int_0^1 \frac{(1-r)^{m+1}}{ m!} \left( \nabla^{m+1} g (\phi_s + r \phi_{st})  - \nabla^{m+1}g (\tilde{\phi}_s + r \tilde{\phi}_{st}) \right)dr \left( \tilde{\phi}_{st}^{ \otimes (m+1)} \right) . 
\end{align*}
For the first term above we have $\lesssim |t-s|^{(m+1) \gamma} \|\nabla^{m+1}g\|_{\infty} \varrho_{\gamma}(\X, \tilde{\X})$. For the second term we use, uniformly in $r \in [0,1]$
\begin{align*}
| \nabla^{m+1}g(\phi_s + r \phi_{st})  - \nabla^{m+1}g(\tilde{\phi}_s + r \tilde{\phi}_{st})| & \leq \| \nabla^{m+2}g \|_{\infty} ( |\phi_s - \tilde{\phi}_s| + r |\phi_{st} - \tilde{\phi}_{st}|) \\
 & \lesssim \| \nabla^{m+2}g\|_{\infty} \varrho_{\gamma}(\X, \tilde{\X}) .
\end{align*}
Together with the bound $|\tilde{\phi}_{st}^{\otimes (m+1)}| \lesssim |t-s|^{(m+1) \gamma}$ we see that 
$$
\| R^g_m(\phi_{\cdot}, \phi_{\cdot}) - R^g_m(\tilde{\phi}_{\cdot},\tilde{\phi}_{\cdot} ) \|_{(m+1) \gamma} \lesssim \varrho_{\gamma}(\X, \tilde{\X}) .
$$

Fix integers $q \geq 1$ and $n \geq 0$. Using the estimate $|a \otimes b - a' \otimes b'| \leq |a-a'||b| + |a'||b-b'|$ repeatedly, it is easy to check that
$$
|\nabla g(\phi_s)(X_{st}^{\otimes n} \otimes (R_{st}^{\phi})^{\otimes q}) - \nabla g(\tilde{\phi}_s)(\tilde{X}_{st}^{\otimes n}\otimes  (R_{st}^{\tilde{\phi}})^{\otimes q}|  \lesssim |t-s| \varrho_{\gamma}(\X, \tilde{\X}) .
$$

This combined with (\ref{controlledRemainder}) gives 
$$
\| f (\phi)^{(k) \sharp} -  f (\tilde{\phi})^{(k) \sharp} \|_{(p-k)\gamma} \lesssim \varrho_{\gamma}(\X, \tilde{\X}) 
$$
which ends the proof of the lemma.
\end{proof}

Combining the above Lemma, Lemma \ref{continuousIntegration} and Remark \ref{smoothIntegral} we get

\begin{corollary} \label{smoothStability}
Let $\X \in \mathscr{C}^{\gamma}_g$. Then there exists a family of smooth paths $X^{\epsilon}$ such that 
$$
\int_0^{\cdot} f(\phi_r^{\epsilon}) \dot{X}_r^{\epsilon} dr \rightarrow \int_0^{\cdot} f(\phi_r) d\X_r \hspace{.5cm} \textrm{ in  } C^{\gamma} ([0,T]),
$$
as $\epsilon \rightarrow 0 $.

\end{corollary}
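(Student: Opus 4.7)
The plan is to obtain $X^{\epsilon}$ by mollification and chain together the stability results of this section with the sewing lemma and Remark \ref{smoothIntegral}. First I would fix some $\gamma' \in (0,\gamma)$ close enough to $\gamma$ that $\lfloor 1/\gamma' \rfloor = p$ still, so that the rough path framework operates at the same truncation level for $X$ and for its mollifications. Standard estimates on H\"older mollifications then give $X^{\epsilon} \to X$ in $C^{\gamma'}([0,T])$ together with a uniform bound $\sup_{\epsilon} \|X^{\epsilon}\|_{\gamma} < \infty$. For the geometric lifts this yields $\varrho_{\gamma'}(\X^{\epsilon},\X) \to 0$: each iterated component $\frac{1}{n!}\bigl((X^{\epsilon}_{st})^n - (X_{st})^n\bigr)$ is controlled via the algebraic identity $a^n - b^n = (a-b)\sum_{j=0}^{n-1} a^j b^{n-1-j}$, which combined with the uniform $\gamma'$-H\"older bound and $\|X^{\epsilon} - X\|_{\gamma'} \to 0$ yields $\|(X^{\epsilon})^n - X^n\|_{n\gamma'} \to 0$.

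Next, Lemma \ref{localLipschitz}, applied with $\gamma'$ in place of $\gamma$, and using that $\varrho_{\gamma'}(\X^{\epsilon},0)$ and $\varrho_{\gamma'}(\X,0)$ are uniformly bounded, gives
$$
\| \eta \circ \phi^{\epsilon} ; \eta \circ \phi \|_{\X^{\epsilon}, \X} \lesssim \varrho_{\gamma'}(\X^{\epsilon}, \X) \xrightarrow[\epsilon \to 0]{} 0.
$$
Moreover, the initial data of the two controlled paths agree, since $\phi^{\epsilon}_0 = \phi_0 = x$ and the higher components are $\eta^{(k)}(x)$ independent of $\epsilon$; and the controlled norms $\|\eta \circ \phi^{\epsilon}\|_{\X^{\epsilon}}$ are uniformly bounded, as can be read off from the representation \eqref{controlledRemainder} together with $\|R^{\phi^{\epsilon}}\|_{\gamma'} \lesssim \|b\|_{\infty}$. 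We are therefore in a position to apply Lemma \ref{continuousIntegration} (again with $\gamma'$ replacing $\gamma$), and with the associated quantities
$$
\Xi^{\epsilon}_{st} := \sum_{n=1}^{p} \eta^{(n-1)}(\phi^{\epsilon}_s) (X^{\epsilon})^{(n)}_{st}, \qquad \Xi_{st} := \sum_{n=1}^{p} \eta^{(n-1)}(\phi_s) X^{(n)}_{st},
$$
we conclude $\|\Xi^{\epsilon} - \Xi\|_{\gamma',\,(p+1)\gamma'} \to 0$.

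Finally, the sewing lemma (Lemma \ref{sewingLemma}) provides a bounded linear map $\mathcal{I} : C_2^{\gamma',(p+1)\gamma'}([0,T]) \to C^{\gamma'}([0,T])$, so $\mathcal{I}\Xi^{\epsilon} \to \mathcal{I}\Xi$ in $C^{\gamma'}([0,T])$. By Definition (\ref{roughIntegralDef}) the right-hand side is $\int_0^{\cdot} \eta(\phi_r)\, d\X_r$, while Remark \ref{smoothIntegral} identifies the rough path integral against the geometric lift $\X^{\epsilon}$ of the smooth path $X^{\epsilon}$ with the classical integral $\int_0^{\cdot} \eta(\phi_r^{\epsilon}) \dot{X}^{\epsilon}_r\, dr$, which gives the stated convergence.

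The only delicate point is the preparatory step: mollification of a $C^{\gamma}$ path does not in general converge in $C^{\gamma}$, only in $C^{\gamma'}$ for $\gamma' < \gamma$, and the conclusion of the corollary is to be read in this slightly weaker H\"older topology (which is harmless since $\gamma'$ can be taken arbitrarily close to $\gamma$ subject to $\lfloor 1/\gamma' \rfloor = p$). Once this is arranged, the three continuity statements of this section plug together directly with no further work.
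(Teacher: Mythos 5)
Your proof is correct and takes essentially the same route as the paper, whose proof consists precisely of combining Lemma \ref{localLipschitz}, Lemma \ref{continuousIntegration}, the sewing lemma and Remark \ref{smoothIntegral}; you merely make explicit the mollification step and the uniform bounds that the paper leaves implicit. Your caveat about the loss from $C^{\gamma}$ to $C^{\gamma'}$ is a fair point the paper glosses over (a general $\gamma$-H\"{o}lder path is not a $C^{\gamma}$-limit of smooth paths), and in the intended application it is harmless since $X = B^H$ lies in $C^{\gamma + \delta}$ for some $\delta > 0$ whenever $\gamma < H$, so the mollified paths do converge in $C^{\gamma}$ and the statement holds as written.
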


\subsection{Stability w.r.t. the drift}

Let us fix $\X \in \mathscr{C}^{\gamma}$ and we consider the ODE (\ref{roughODE}). Assume we have a sequence of functions $b_{\epsilon}$ such that there exists a solution of for every $\epsilon > 0$ to
$$
\phi^{\epsilon}_t = x + \int_0^t b_{\epsilon}(r,\phi^{\epsilon}_r)dr + X_t .
$$
We will show stability in the sense of controlled rough paths when we \emph{assume} that $\phi^{\epsilon}$ converges in an appropriate topology. This convergence will be shown to hold in Proposition \ref{HolderConvergence} for our particular case.

\begin{lemma} \label{roughIntegralConvergence}
Assume $\phi^{\epsilon}$ converges in $C^{\gamma}$ to the solution of (\ref{roughODE}). Then for any $f \in C^{p}_b(\R^d;\R^d)$ we have that the lift of $f (\phi^{\epsilon})$ converges in $\mathscr{D}^{p\gamma}_{\X}$ to $f(\phi)$, and as $\epsilon \rightarrow 0$
$$
\int_0^{\cdot} f(\phi^{\epsilon}_r) d\X_r \rightarrow \int_0^{\cdot} f(\phi_r)d\X_r
$$
where the above convergence is in $C^{\gamma}$.

\end{lemma}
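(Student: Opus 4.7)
The plan is to first upgrade the assumed convergence $\phi^{\epsilon} \to \phi$ in $C^{\gamma}$ to convergence of the controlled-path lifts, i.e.\ $\|\eta\circ\phi^{\epsilon};\eta\circ\phi\|_{\X,\X}\to 0$, and then apply the continuity statement of Lemma \ref{continuousIntegration} together with the sewing lemma to transfer this to convergence of the rough integrals in $C^{\gamma}$.

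For the first step I would exploit the explicit formula (\ref{controlledRemainder}), which expresses $\eta(\phi)^{(k)\sharp}_{st}$ purely in terms of $\eta^{(j)}(\phi_s)$, $X^{(n-q)}_{st}$, $R^{\phi}_{st}:=\phi_{st}-X_{st}$ and a Taylor remainder $R^{\eta^{(k)}}_{p-k-1}(\phi_s,\phi_t)$. The same formula holds with $\phi$ replaced by $\phi^{\epsilon}$. Since $\phi^{\epsilon}\to\phi$ in $C^{\gamma}$ we have $R^{\phi^{\epsilon}}\to R^{\phi}$ in $C^{\gamma}_2$ (and uniformly in $\epsilon$, $|R^{\phi^{\epsilon}}_{st}|\lesssim|t-s|$ from boundedness of the $b_{\epsilon}$), and $\|\phi^{\epsilon}\|_{C^{\gamma}}$ is uniformly bounded. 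Writing
$$
\eta(\phi^{\epsilon})^{(k)\sharp}_{st}-\eta(\phi)^{(k)\sharp}_{st}
$$
as a telescopic sum of the terms appearing in (\ref{controlledRemainder}) and using the elementary identity $ab-a'b'=(a-a')b+a'(b-b')$ repeatedly, each summand of the form $\eta^{(k+n)}(\phi_s)X^{(n-q)}_{st}(R^{\phi}_{st})^q$ produces two kinds of contributions:  one controlled by $\|\phi^{\epsilon}-\phi\|_{\infty}$ (via the bounded derivatives of $\eta$), and one controlled by the $C^{\gamma}_2$-norm of $R^{\phi^{\epsilon}}-R^{\phi}=\phi^{\epsilon}-\phi$ times lower-order bounded factors. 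For the Taylor remainder I would use the integral representation of $R^f_m$ (as in the proof of Lemma \ref{localLipschitz}, but with $\tilde{\X}=\X$, so the $\varrho_{\gamma}(\X,\tilde{\X})$-contribution vanishes) to get $\|R^{\eta^{(k)}}_{p-k-1}(\phi_\cdot,\phi_\cdot)-R^{\eta^{(k)}}_{p-k-1}(\phi^{\epsilon}_\cdot,\phi^{\epsilon}_\cdot)\|_{(p-k)\gamma}\lesssim \|\phi^{\epsilon}-\phi\|_{\gamma}+\|\phi^{\epsilon}-\phi\|_{\infty}$. Summing all contributions and scaling by $|t-s|^{(p-k)\gamma}$ yields
$$
\|\eta\circ\phi^{\epsilon};\eta\circ\phi\|_{\X,\X}\lesssim \|\phi^{\epsilon}-\phi\|_{\gamma}\longrightarrow 0.
$$

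For the second step I apply Lemma \ref{continuousIntegration} with $\tilde{\X}=\X$ (so $\varrho_{\gamma}(\X,\tilde{\X})=0$) and with $\Y=\eta\circ\phi^{\epsilon}$, $\tilde{\Y}=\eta\circ\phi$; the uniform bound $M$ is available because $\|\eta\circ\phi^{\epsilon}\|_{\X}$ is controlled in terms of $\|\eta\|_{C^{\infty}_b}$, $\varrho_{\gamma}(0,\X)$ and $\|\phi^{\epsilon}\|_{\gamma}$, the latter being uniform by convergence. Denoting by $\Xi^{\epsilon}$ and $\Xi$ the corresponding germs, this gives $\|\Xi^{\epsilon}-\Xi\|_{\gamma,(p+1)\gamma}\to 0$. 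Applying the sewing map $\mathcal{I}$, which by Lemma \ref{sewingLemma} is a continuous linear map into $C^{\gamma}([0,T])$, yields convergence of $\int_0^{\cdot}\eta(\phi^{\epsilon}_r)d\X_r$ to $\int_0^{\cdot}\eta(\phi_r)d\X_r$ in $C^{\gamma}$.

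The main technical obstacle is the first step: one must verify that the differences of the many summands in (\ref{controlledRemainder}), together with the Taylor remainder, all give the correct Hölder exponent $(p-k)\gamma$ when bounded by $\|\phi^{\epsilon}-\phi\|_{\gamma}$. This is essentially a simplified version of the estimates in the proof of Lemma \ref{localLipschitz} (since here $\tilde{\X}=\X$), so the combinatorics are routine once the decomposition is set up; the payoff is that only $C^{\gamma}$-convergence of the flows is required, with no hypotheses on derivatives of the approximating drifts $b_{\epsilon}$.
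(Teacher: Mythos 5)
Your proposal is correct and follows essentially the same route as the paper: the paper's proof also reduces the first claim to showing $\|\eta(\phi)^{(k)\sharp}-\eta(\phi^{\epsilon})^{(k)\sharp}\|_{(p-k)\gamma}\to 0$ via the formula (\ref{controlledRemainder}) and the argument of Lemma \ref{localLipschitz} with $\tilde{\X}=\X$, and then obtains convergence of the integrals from the continuity of the sewing construction (Lemma \ref{continuousIntegration} together with Lemma \ref{sewingLemma}). You have simply spelled out the "minor modifications" (replacing $\varrho_{\gamma}(\X,\tilde{\X})$-terms by $\|\phi^{\epsilon}-\phi\|_{\gamma}$-terms) that the paper leaves implicit.
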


\begin{proof}
Note that the second claim follows from the first in connection with Remark \ref{sewingLemma}.

To see the first claim, one has to show 
$$
\lim_{\epsilon \rightarrow 0} \| f( \phi)^{(k) \sharp} - f( \phi^{\epsilon})^{(k) \sharp} \|_{(p-k) \gamma} = 0
$$
for all $k = 0,1, \dots, p-1$. The proof follows the same lines as the proof of Lemma \ref{localLipschitz} with minor modifications, noting that $\X = \tilde{\X}$. 
\end{proof}

\subsection{An It\^{o}-Stratonovich formula}

For the sake of being self-contained, we include a change-of-variable formula for our particular case. Let $\eta \in C^{\infty}_c(\R^d)$ and assume $\phi_{\cdot}$ solves (\ref{roughODE}). If $X$ is a smooth path usual calculus yields,
$$
\frac{d}{dt} \eta(\phi_t) =  \nabla\eta(\phi_t)  \cdot b(t,\phi_t)   +   \nabla \eta(\phi_t) \cdot  \dot{X}_t  .
$$

We can generalize this to geometric rough paths.

\begin{lemma} \label{ItoFormula}
Suppose $\eta \in C^{\infty}_c(\R^d)$ and $\X$ is a rough path above $X$. Then we have
$$
\eta(\phi_t) = \eta(x) + \int_0^t  \eta(\phi_r) \cdot b(r,\phi_r)  dr  +  \int_0^t  \nabla \eta(\phi_r) d\X_r  .
$$
where the last term is the rough path integral.

\end{lemma}

\begin{proof}
  Let $ 0 \leq u \leq v \leq t$ and use Taylor's formula to write, as in (\ref{controlledRemainder})
 \begin{align*}
   \eta(\phi)_{uv} & = \sum_{n=1}^p \frac{\nabla^n\eta (\phi_u)}{n!} (\phi_{uv})^{\otimes n} + R^{\eta}_p(\phi_u,\phi_v)  =  \nabla\eta(\phi_u) R^{\phi}_{uv} + \sum_{n=1}^p \nabla^n \eta(\phi_u) X_{uv}^{(n)} + \Xi_{uv}
 \end{align*}

 where
$$
\Xi_{uv} := R^{\eta}_p(\phi_u,\phi_v) + \sum_{n=2}^p \sum_{q=1}^{n-1} \binom{n}{q} \frac{\nabla^n\eta(\phi_u)}{n!} ( (R_{uv}^{\phi})^{\otimes (n-q)} \otimes X_{uv}^{\otimes q})
$$
and notice that $\Xi \in C^{1+ \gamma}_2([0,T]) \subset ker(\mathcal{I})$. We have
\begin{align*}
  \lim_{|\mathcal{P}| \rightarrow 0} \sum_{[u,v] \in \mathcal{P}}  \nabla\eta(\phi_{u}) \cdot  \int_u^v b(r,\phi_r)dr  & = \lim_{|\mathcal{P}| \rightarrow 0} \int_0^t  \sum_{[u,v] \in \mathcal{P}}  \nabla\eta(\phi_{u})1_{[u,v]}(r) \cdot   b(r,\phi_r)  dr \\
  & = \int_0^t  \nabla\eta(\phi_r) \cdot  b(r,\phi_r)  dr
\end{align*}
where we used continuity of $\nabla\eta$ and dominated convergence in the last step to take in the limit. Note that the above reasoning does not use any regularity requirements on $b$.

Finally, we have
\begin{align*}
  \eta(\phi_t)  - \eta(x) & = \mathcal{I} (\eta(\phi_{\cdot}))_{0,t} = \lim_{|\mathcal{P}| \rightarrow 0} \sum_{[u,v] \in \mathcal{P}} \eta(\phi)_{uv} \\
  & = \lim_{|\mathcal{P}| \rightarrow 0} \sum_{[u,v] \in \mathcal{P}} \left( \nabla\eta(\phi_u) R^{\phi}_{uv} + \sum_{n=1}^p \nabla^n \eta(\phi_u) X_{uv}^{(n)} + \Xi_{uv} \right) \\
  & = \mathcal{I}( \nabla\eta(\phi_{\cdot}) R^{\phi}_{\cdot \cdot}) +  \mathcal{I}(\sum_{n=1}^p \nabla^n \eta(\phi_{\cdot}) X_{\cdot \cdot}^{(n)}) + \mathcal{I}(\Xi) \\
  & = \int_0^t  \nabla\eta(\phi_r)  \cdot b(r,\phi_r)  dr  +  \int_0^t  \nabla \eta(\phi_r) d\X_t 
  \end{align*}
by definition of the rough path integral.
\end{proof}

\subsection{Integrated ODE's}

To emphasize that the solution of (\ref{roughODE}) depends on the initial value $x$, we denote its solution by $\phi_{\cdot}(x)$, i.e.
$$
\phi_t(x) = x + \int_0^t b(r, \phi_r(x)) dr + X_t .
$$
Let $\nu$ be a finite signed measure on $\R^d$, and $f = (f^{(1)}, \dots, f^{(d)}) \in C^p_b(\R^d;\R^d)$.  In later chapters we shall be interested in expressions on the form
$$
\nu( f(\phi_{\cdot})) := \left(\int_{\R} f^{(1)} (\phi_{\cdot}(x)) d \nu(x), \dots, f^{(d)} (\phi_{\cdot}(x)) d \nu(x) \right) \in \LL(\R^d) 
$$
as a controlled path in order to define $ \int_0^t \nu( f(\phi_{r})) d \X_r$ in the rough path sense. Similar results as the previous chapters holds, summarized below. 

\begin{proposition} \label{summaryProposition}
Retain the hypotheses and notations respectively from Corollary \ref{roughIntegralDefCor}, Corollary \ref{smoothStability} and Lemma \ref{roughIntegralConvergence}. The following holds.

\end{proposition}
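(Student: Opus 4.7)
The proof will proceed by reducing each claim to its pointwise-in-$x$ analogue (already established in the preceding subsections) plus an application of Fubini together with uniform-in-$x$ bounds, so that one can bring the integral against $d\mu(x)$ inside the relevant norms. First I would define, for $k=0,\dots,p-1$, the candidate controlled path
\[
Y_s^{(k)} := \int_{\R} \eta^{(k)}(\phi_s(x))\, d\mu(x),
\]
and by linearity in $x$ observe that the associated remainder satisfies
\[
Y^{(k)\sharp}_{st} = \int_{\R} \bigl(\eta\circ\phi(\cdot,x)\bigr)^{(k)\sharp}_{st}\, d\mu(x).
\]
The key point is that the bound on $(\eta\circ\phi(\cdot,x))^{(k)\sharp}$ established through (\ref{controlledRemainder}) depends only on $\|b\|_{\infty}$, on finitely many sup-norms of derivatives of $\eta$, and on $\varrho_\gamma(0,\X)$ — none of which involve $x$. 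Consequently $|Y^{(k)\sharp}_{st}| \lesssim |\mu|(\R)\,|t-s|^{(p-k)\gamma}$, so $Y \in \mathscr{D}^{p\gamma}_{\X}$, and Corollary \ref{roughIntegralDefCor} yields a well-defined rough integral $\int_0^t Y_r\, d\X_r$.

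Next, for the smooth stability statement (the analogue of Corollary \ref{smoothStability}), I would apply Lemma \ref{continuousIntegration} to the controlled paths $Y$ and $Y^\epsilon$ associated with $\X$ and a smoothly-lifted $\X^\epsilon$. By the proof of Lemma \ref{localLipschitz} and the flow bound (\ref{flowStability}), the seminorm $\|\eta\circ\phi(\cdot,x);\eta\circ\phi^\epsilon(\cdot,x)\|_{\X,\X^\epsilon}$ is bounded uniformly in $x$ by a constant multiple of $\varrho_\gamma(\X,\X^\epsilon)$. Integrating against $d\mu$ then gives $\|Y;Y^\epsilon\|_{\X,\X^\epsilon} \lesssim |\mu|(\R)\,\varrho_\gamma(\X,\X^\epsilon)$, and Lemma \ref{continuousIntegration} combined with the sewing lemma delivers convergence of the rough integrals in $C^\gamma$. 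The identification with the classical integral $\int_0^\cdot\!\int_\R \eta(\phi^\epsilon_r(x))\,d\mu(x)\,\dot X^\epsilon_r\,dr$ for smooth $X^\epsilon$ is Remark \ref{smoothIntegral} applied pointwise in $x$ followed by Fubini.

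For the drift-stability claim (analogue of Lemma \ref{roughIntegralConvergence}), I would again lift the pointwise estimate: the proof of Lemma \ref{roughIntegralConvergence} provides, for each fixed $x$, $\|\eta\circ\phi(\cdot,x)^{(k)\sharp}-\eta\circ\phi^\epsilon(\cdot,x)^{(k)\sharp}\|_{(p-k)\gamma}\to 0$ under $\phi^\epsilon(\cdot,x)\to\phi(\cdot,x)$ in $C^\gamma$, and inspection of that argument shows the estimate depends only on $\|b\|_{\infty}$, $\varrho_\gamma(0,\X)$, and $\|\phi^\epsilon-\phi\|_{C^\gamma}$, hence is dominated uniformly in $x$. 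Dominated convergence with respect to $d\mu$ then transfers the convergence to $Y^{(k)\sharp}$, and Lemma \ref{continuousIntegration} (applied with $\tilde{\X}=\X$) propagates it to the rough integrals in $C^\gamma$.

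The main technical delicacy I expect is the uniformity in $x$ of all the Hölder-type bounds: one has to verify that the constants appearing in the proofs of the earlier lemmas (in particular the constants $C_M$ of Lemma \ref{continuousIntegration} and Lemma \ref{localLipschitz}, and the implicit constants in (\ref{controlledRemainder}) and (\ref{flowStability})) really do depend only on $\|b\|_{\infty}$, $\varrho_\gamma(0,\X)$ and sup-norms of derivatives of $\eta$, and not on the initial point $x$. Once this is checked, the passage from pointwise-in-$x$ to $\mu$-integrated statements is a routine application of Fubini and dominated convergence, and the rest of the proposition reduces mechanically to the corresponding statements already established.
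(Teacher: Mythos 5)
Your proposal follows essentially the same route as the paper: the paper likewise integrates the remainder formula (\ref{controlledRemainder}) against $\mu$, exploits that the bounds depend only on $\|b\|_{\infty}$, the rough path norm and sup-norms of derivatives of $\eta$ (hence are uniform in $x$), and then disposes of the stability assertions by linearity, boundedness of $b$ and dominated convergence, exactly as you outline. Your write-up is in fact more explicit than the paper's about the uniformity in $x$ and the use of Lemmas \ref{continuousIntegration} and \ref{localLipschitz}, but it is the same argument.
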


\begin{enumerate}

\item
The rough path integral $ \int_0^t \nu( f(\phi_{r})) d \X_r$ is well defined.

\item
Let $\X \in \mathscr{C}^{\gamma}_g$. Then there exists a family of smooth paths $X^{\epsilon}$ such that 
$$
\int_0^{\cdot} \nu( f(\phi^{\epsilon}_{r})) \dot{X}_r^{\epsilon} dr \rightarrow \int_0^{\cdot} \nu( f(\phi_{r})) d\X_r \hspace{.5cm} \textrm{ in  } C^{\gamma} ([0,T]),
$$
as $\epsilon \rightarrow 0 $, where $\phi^{\epsilon}$ denotes the solution of \eqref{roughODE} with $X$ replaced by $X^{\epsilon}$.

\item
If $   \nu( f(\phi^{\epsilon}_{\cdot})) \rightarrow \nu( f(\phi_{\cdot}))  $ in $C^{\gamma}$ we have
$$
\int_0^{\cdot}  \nu( f(\phi^{\epsilon}_{r}))  d\X_r \rightarrow \int_0^{\cdot}  \nu( f(\phi_{r}))  d\X_r \hspace{.5cm} \textrm{ in  } C^{\gamma} ([0,T]),
$$
as $\epsilon \rightarrow 0 $.

\end{enumerate}

\begin{proof}

Begin with the first assertion. Integrating (\ref{controlledRemainder}) w.r.t. $\nu$ gives
\begin{align*}
\int_{\R^d} f( \phi(x))_{st}^{(k) \sharp} d\nu(x)   & = \sum_{n=1}^{p-k-1} \sum_{q=1}^n \int_{\R^d} \frac{\nabla f^{(k+n)}(\phi_s(x))}{q!}  (R^{\phi(x)}_{st})^{\otimes q} \otimes X_{st}^{\otimes (n-q)}d\nu(x) \\
&  + \int_{\R^d}  R^{\nabla^kf}_{p-k-1}(\phi_s(x), \phi_t(x))  d\nu(x).
\end{align*}
Since $\nu$ is finite and $b$ is bounded we get for each $k,n$ and $q$
$$
|\int_{\R^d} \frac{\nabla^{k+n}f(\phi_s(x))}{q!}  (R^{\phi(x)}_{st})^{\otimes q} d\nu(x)| \lesssim |t-s|.
$$
Furthermore
$$
|\int_{\R^d}  R^{\nabla^{k}f}_{p-k-1}(\phi_s(x), \phi_t(x))  d\nu(x)| \lesssim \int_{\R^d} |\phi_{st}(x)|^{p-k} d\nu(x) \lesssim |t-s|^{(p-k)\gamma},
$$
so that $\int_{\R^d} f( \phi_{\cdot}(x)) d\nu(x) $ is a controlled path and 
$$
\left(\int_{\R^d} f( \phi(x)) d\nu(x)\right)_{st}^{(k) \sharp}  = \int_{\R^d} f( \phi(x))_{st}^{(k) \sharp} d\nu(x). 
$$

Using linearity, boundedness of $b$ and dominated convergence the reader is invited to complete the remaining steps of the proof.
\end{proof}

\section{Fractional Brownian motion and Girsanov's theorem}

Let $B = \{B_t, t\in [0,T]\}$ be a 1-dimensional \emph{fractional Brownian motion} (fBm) with Hurst parameter $H\in ( 0,\frac{1}{2})$, i.e. a centered Gaussian process with covariance
$$
R_H(t,s):= E[B_t  B_s]= \frac{1}{2}\left(t^{2H} + s^{2H} - |t-s|^{2H} \right) .
$$
Observe that $B$ has stationary increments and H\"{o}lder continuous trajectories of index $H-\varepsilon$ for all $\varepsilon>0$. 

Denote by $\mathcal{E}$ the set of step functions on $[0,T]$ and denote by $\mathcal{H}$ the Hilbert space defined as the closure of $\mathcal{E}$ with respect to the inner product 
$$
\langle 1_{[0,t]} , 1_{[0,s]}\rangle_{\mathcal{H}} = R_H(t,s).
$$
The mapping $1_{[0,t]} \mapsto B_t$ can be extended to an isometry between $\mathcal{H}$ and a Gaussian subspace of $L^2(\Omega)$. 

For a function $f \in L^2([a,b])$, we define the \emph{left fractional Riemann-Liouville integral} by
$$
I^{\alpha}_{0+} f(x) = \frac{1}{\Gamma(\alpha)} \int_0^x (x-y)^{\alpha - 1} f(y) dy
$$
for $\alpha > 0$. Denote by $I^{\alpha}_{0+}(L^2([a,b]))$ the image of $L^2([a,b])$ under $I^{\alpha}_{0+}$ and by $D^{\alpha}_{a+}$ its inverse.

We define $K_H(t,s)$ as
$$
K_H(t,s) = c_H \Gamma \left( H+\frac{1}{2}\right) s^{\frac{1}{2}-H} \left( D_{t^-}^{\frac{1}{2}-H} u^{H-\frac{1}{2}}\right)(s),
$$
for some constant $c_H$ and write $K_H$ for the operator from $L^2([0,T])$ onto $I_{0+}^{H+\frac{1}{2}}(L^2)$ associated with the kernel $K_H(t,s)$.
It follows that 
$$ R_H(t,s) = \int_0^{t\wedge s} K_H(t,u)K_H(s,u)du.$$

Moreover, if $W = \{ W_t : t \in [0,T]\}$ is a standard Brownian motion $B$ can be represented as
\begin{equation} \label{fBmRepresentation}
B_t = \int_0^t K_H(t,s) dW_s.
\end{equation}

A d-dimensional fractional Brownian motion is a d-dimensional process where the components are independent 1-dimensional fractional Brownian motions.

\begin{theorem}[Girsanov's theorem for fBm]\label{girsanov}
Let $u=\{u_t, t\in [0,T]\}$ be an $\R^d$-valued, $\{\mathcal{F}_t\}_{t\in[0,T]}$-adapted process with integrable trajectories  and set
$\widetilde{B}_t = B_t + \int_0^t u_s ds, \quad t\in [0,T].$
Assume that
\begin{itemize}
\item[(i)] $\int_0^{\cdot} u_s ds \in (I_{0+}^{H+\frac{1}{2}} (L^2 ([0,T]))^d$, $P$-a.s.

\item[(ii)] $E[\xi_T]=1$ where
$$\xi_T := \exp\left\{-\int_0^T  K_H^{-1}\left( \int_0^{\cdot} u_r dr\right)(s) \cdot dW_s  - \frac{1}{2} \int_0^T \left|K_H^{-1} \left( \int_0^{\cdot} u_r dr \right)(s) \right|^2ds \right\}.$$
\end{itemize}
Then the shifted process $\widetilde{B}$ is an $\{\mathcal{F}_t\}_{t\in[0,T]}$-fractional Brownian motion with Hurst parameter $H$ under the new probability $\widetilde{P}$ defined by $\frac{d\widetilde{P}}{dP}=\xi_T$.

Moreover, for every $p > 1$ we have $E[ |\xi_T|^p] \leq C_p(\|b\|_{\infty})$, where $C_p(\cdot)$ is an increasing function.
\end{theorem}

For a proof we refer to \cite{NualartOuknine}. In particular, the moment-estimate is found in the proof of Theorem 3, \cite{NualartOuknine}.

In the absence of the independent increments one has for $H= \frac{1}{2}$, we shall need the following fact (see \cite[Theorem 3.1]{Xiao}).

\begin{lemma}
The fractional Brownian motion is strong local non-deterministic, i.e. there exists a constant $c$ such that 
\begin{equation} \label{LocalNonDeterminism}
Var( B_t : (B_s)_{s : |t - s| \geq r} ) \geq c r^{2H} .
\end{equation}
\end{lemma}

Given an $m$-dimensional Gaussian vector $Z \sim \mathcal{N}(0,\Sigma)$ it is well known that 
\begin{equation} \label{GaussianCovariance}
|\Sigma| = Var(Z_m) Var(Z_{m-1} | Z_m) \dots Var(Z_1 | Z_m \dots Z_2),
\end{equation}
and so from Cramer's rule we get
\begin{equation} \label{CramersRule}
(\Sigma^{-1})_{j,j} = (Var(Z_j | Z_1, \dots, \widehat{Z_j}, \dots, Z_m))^{-1} 
\end{equation}

We shall need the following technical estimates on the fractional Brownian motion.

\begin{proposition}
Given a fractional Brownian motion there exists $C$ such that 
\begin{equation} \label{fBmBound}
\int_{\R^m} \prod_{j=1}^m |v_j|^k \exp \left\{ - \frac12 Var \left(  \sum_{j=1}^m v_j  B_{s_j} \right)  \right\} dv_1 \dots dv_m \leq C^m \sqrt{(km)!} \prod_{j=1}^m |s_{j} - s_{j-1}|^{-H(1 + 2k)}
\end{equation}
for all $(s_1, \dots, s_m) \in \Delta^{(m)}(0,T)$, and we read $s_0 = 0$.
\end{proposition}

\begin{proof}
Define the matrix $A_{i,j} = E[B_{s_i} B_{s_j}]$, let $X \sim \mathcal{N}(0, A^{-1})$ and denote by $\tilde{X}$ the $km$-dimensional Gaussian vector $$
\tilde{X}_i = X_j \textrm{ for } (j-1)k + 1 \leq i \leq jk.
$$ 

Rewrite the right hand side of \eqref{fBmBound} as 
\begin{align*}
(2 \pi)^{m/2} |A|^{-1/2}   E [ \prod_{j=1}^m |X_j|^k]  & = (2 \pi)^{m/2} |A|^{-1/2}   E [ \prod_{i=1}^{km} |\tilde{X}_i|] \\
 & \leq (2 \pi)^{m/2} |A|^{-1/2}  \left( \sum_{\sigma \in S_{km}} \prod_{i=1}^{km} E[ \tilde{X}_i \tilde{X}_{\sigma(i)} ]   \right)^{1/2} \\ 
 &  \leq (2 \pi)^{m/2} |A|^{-1/2}  \left( \sum_{\sigma \in S_{km}} \prod_{i=1}^{km} E[ \tilde{X}_i^2  ]^{1/2} E[ \tilde{X}_{\sigma(i)}^2 ]^{1/2}   \right)^{1/2} \\ 
  &  = (2 \pi)^{m/2} |A|^{-1/2}  \left( \sum_{\sigma \in S_{km}} \prod_{j=1}^m E[ X_j^2  ]^k  \right)^{1/2}   = (2 \pi)^{m/2} |A|^{-1/2}  \left( (km)!  \prod_{j=1}^m E[ X_j^2  ]^k  \right)^{1/2}, 
 \end{align*}
 where we have used \cite[Theorem 1]{LiWei} in the first inequality.
Then we get from \eqref{CramersRule} that
$$
(A^{-1})_{j,j} 
\geq c |s_{j+1} - s_j|^{2H} \wedge |s_{j} - s_{j-1}|^{2H} \geq c |s_{j} - s_{j-1}|^{4H} 
$$
where we have used \eqref{LocalNonDeterminism} and $|s_{j+1} - s_{j}| \leq 1$ in the two last steps, respectively.
Using \eqref{GaussianCovariance} and \eqref{LocalNonDeterminism} we get that 
$$
|A|^{-1/2} \leq c^{-m} \prod_{j=1}^m |s_j - s_{j-1}|^{-H}
$$
The result follows.  
%
\end{proof}

Let us mention that the fractional Brownian motion can be lifted to a rough path. This was first done in \cite{Unterberger}, but we shall refer to \cite{NualartTindel} for a different construction where the authors construct the iterated integrals using a Stratonovich-Volterra-type representation.

\begin{theorem}[Theorem 1.1. in \cite{NualartTindel}]
Let $B$ be a fractional Brownian motion admitting the representation (\ref{fBmRepresentation}). For $1 \leq n \leq \lfloor \frac{1}{H} \rfloor$ define
    $$
B^{(n)} : \Delta^{(2)}(0,T) \rightarrow (\R^d)^{\otimes n}
$$
component wise, i.e. for any tuple $\{ i_1, \dots i_n \}$ in $\{ 1, \dots, d\}$, as the Stratonovich iterated integral
\begin{align*}
  \langle B^{(n)}_{st}, & e_{i_1} \otimes \dots \otimes  e_{i_n} \rangle = 
   \sum_{j=1}^n (-1)^{j-1}  \int_{A^n_j} \prod_{l=1}^{j-1} K(s,r_l) [ K(t,r_j) - K(s,r_j)] \prod_{l=j+1}^n K(t,r_l) \circ dW^{i_1}_{r_1} \dots \circ dW^{i_n}_{r_n} \\
\end{align*}
where 
\begin{align*}
  A^n_j  :=  \{ (r_1, & \dots r_n) \in [0,t]^n : r_j = \textrm{min}(r_1, \dots, r_n),  r_1 > \dots > r_{j-1} \textrm{ and } r_{j+1} < \dots < r_n \} . \\
\end{align*}
Then there exists a set $\Omega_{\B}$ with full measure such that
$$
\B_{st} := (1, B_t - B_s, B^{(2)}_{st} , \dots B^{(\lfloor 1/H \rfloor)}_{st})
$$
satisfies  (\ref{Chen}) and (\ref{symmetryCondition})  on $\Omega_{\B}$. Moreover, for $\gamma <H$ we have $|B^{(n)}_{st}| \lesssim |t-s|^{\gamma n}$.
\end{theorem}

Assume now that $H$ is such that $\frac{1}{H}$ is not an integer. We can choose $ \gamma < H$ such that $\lfloor \frac{1}{\gamma} \rfloor = \lfloor \frac{1}{H} \rfloor$, and from the above theorem we have, $P$-a.s., $\B \in \mathscr{C}^{\gamma}_g$.

Let us remark that for $H \in (\frac{1}{4} , \frac{1}{2})$ there exists a lift of $B$ to a rough path building the iterated integral from linear interpolation of $B$. For the method of the current paper to work we need smaller $H$, see Section \ref{fBmSDEChapter}. When $H \in (0, \frac{1}{4})$ the dyadic interpolation fails to give a converging sequence of rough paths, see \cite{CoutinQian}. Nevertheless, the construction in \cite{NualartTindel} gives a geometric rough path so that we may approximate $\B$ by a sequence of lifted smooth paths, in the rough path topology. 

\section{Fractional Brownian motion SDE's} \label{fBmSDEChapter}

For this section we shall study a SDE driven by an additive fractional Brownian motion, i.e.
\begin{equation} \label{SDE}
\phi_t(x) = x + \int_0^t b(r,\phi_r(x))dr + B_t .
\end{equation}
Existence and uniqueness of a solution to this equation under low regularity on $b$ was recently proved in \cite{BNP} as demonstrated in the next Proposition. For proofs the reader is referred to \cite{BNP}.

\begin{proposition}[Theorem 4.1 and Corollary 4.8 in \cite{BNP}]  \label{compactSequence}
  Assume $H < \frac{1}{2(2d+1)}$.
  Let $\{b_n\}_{n \geq 0} \subset C^{\infty}_c([0,T] \times \R^d ; \R^d)$ be a sequence of functions such that
$$
\sup_{n \geq 0}  \left( \| b_n \|_{\infty,1} \vee  \| b_n \|_{\infty}  \right) < \infty .
$$

Denote by $\phi_n(t,x)$ the solution to (\ref{SDE}) when $b$ is replaced by $b_n$. Then for fixed $(t,x) \in [0,T] \times \R^d$ the sequence is $\phi_n(t,x)$ is relatively compact in the \emph{strong} topology of $L^2(\Omega)$.

Furthermore, if $ \lim_{n \rightarrow \infty}b_n(t,x) = b(t,x) $ for almost all $(t,x) \in [0,T] \times \R^d$ for $b \in L^1(\R^d; L^{\infty}([0,T];\R^d)) \cap  L^{\infty}([0,T] \times \R^d;\R^d)$ then $\phi_n(t,x)$ is converging for every $(t,x) \in [0,T] \times \R^d$ to the unique solution of (\ref{SDE}).

\end{proposition}

The proof of this Proposition relies on a compactness criterion from \cite{DPMN} based on Malliavin calculus. Without going into too much detail there is compactness in $L^2(\Omega)$ if we can bound the Malliavin derivative of $\phi_n(t,x)$ by a constant depending only on $\| b_n \|_{L^1(\R^d; L^{\infty}([0,T];\R^d))} \vee  \| b_n \|_{L^{\infty}([0,T] \times \R^d;\R^d)}$. 

Once one has strong convergence, one can use a somewhat standard trick, see e.g. \cite{GyongyPardoux} or \cite{NualartOuknine}, to show that $\int_0^t b_n(r,\phi_n(r,x))dr \rightarrow \int_0^t b(r,\phi_r(x))dr$ which gives that the limit solves (\ref{SDE}).

\vspace{.2cm}

Furthermore the following result shows how the fBm regularizes the flow of (\ref{SDE}).

\begin{lemma}[Theorem 5.1 in \cite{BNP}]
Assume $H < \frac{ 1}{(d-1 + 2k)}$ and let $p,k$ be integers, $p \geq 2$, $k \geq 1$. There exists an increasing function $C: [0,\infty) \rightarrow [0,\infty)$ only depending on $H,d,p$ and $k$ such that
$$
\sup_{t \in [0,T], x \in \R^d} E \left[ \left| \nabla^k \phi_n(t,x) \right|^p \right] \leq C(  \| b_n \|_{\infty,1} \vee  \| b_n \|_{\infty}).
$$
\end{lemma}

Using the two previous results together with weak compactness in $L^2(\Omega;W^{k,p}(U))$ for an open and bounded $U \subset \R^d$ we get the following result.

\begin{theorem}[Theorem 5.2 in \cite{BNP}]
Assume $H < \left( \frac{1}{2(2d+1)} \wedge \frac{1}{2(d-1 + 2k)} \right)$ and $b \in L^1(\R^d; L^{\infty}([0,T];\R^d)) \cap  L^{\infty}([0,T] \times \R^d;\R^d)$. For every open and bounded $U \subset \R^d$ the solution to (\ref{SDE}) is $k$-times weakly differentiable in the sense that
$$
\phi_t \in L^2(\Omega; W^{k,p}(U))
$$
for every $p > 1$. Moreover, $\phi_n(t)$ converges to $\phi_t$ in the weak topology of $L^2(\Omega; W^{k,p}(U))$.

\end{theorem}

\subsection{The one-dimensional case}

In this section we include a proof of Proposition \ref{compactSequence}  when $d=1$ and $H < \frac{1}{6}$. 
From \cite{NualartOuknine} it is already known that there exists a unique strong solution to this equation when $b$ of linear growth. From \cite{NualartOuknine} it also becomes clear why the proof is simpler when $d=1$ - one can use comparison of SDE's to generate the strong convergence as indicated in Section \ref{singularSDE}.

We shall restrict our attention to when $b$ is bounded and integrable, but we are interested in how the solution depends on the initial value $x$.
More specifically we will show the following.

\begin{theorem} \label{sobolevFlowTheorem}
Assume $b \in L^1(\R^d; L^{\infty}([0,T];\R)) \cap L^{\infty}([0,T] \times \R)$ . If $H < \frac{1}{6}$ there exists a unique strong solution to (\ref{SDE}). Moreover the mapping $(x \mapsto \phi_t(x))$ is weakly differentiable in the sense that for fixed $t$ we have
$$
\phi_t (\cdot) \in L^2(\Omega; W^{1,p}(U))
$$
for all open and bounded $U \subset \R$.

\end{theorem}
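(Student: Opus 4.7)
\medskip

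\noindent\emph{Proof proposal.} The plan is to follow exactly the outline announced in the introduction: mollify the drift, obtain the derivative formula for the smooth flows, and then extract uniform $L^p_x L^2_\Omega$ bounds on the spatial derivative using a Girsanov change of measure combined with local time calculus. Existence and uniqueness of a strong solution to \eqref{SDE} for bounded measurable $b$ is already available from Nualart--Ouknine, so I would quote that and concentrate on the weak differentiability.

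First let $b_n \in C_b^\infty([0,T] \times \R)$ be a standard mollification of $b$ with $\|b_n\|_\infty \le \|b\|_\infty$ and $b_n \to b$ a.e. Let $\phi^n_t(x)$ be the (classical $C^1$ in $x$) flow of the smoothed SDE. Differentiating gives the explicit formula
$$
\partial_x \phi^n_t(x) \;=\; \exp\!\left\{ \int_0^t (\partial_x b_n)(r, \phi^n_r(x))\, dr \right\}.
$$
Under the Girsanov measure $\widetilde{P}_n$ of Theorem \ref{girsanov} (with drift $u_s = b_n(s, \phi^n_s(x))$), the process $\phi^n_\cdot(x) - x$ is an fBm $\widetilde{B}^H$. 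Writing $L(t, \cdot)$ for the local time of $\widetilde{B}^H$ at time $t$ and using the occupation-time formula followed by integration by parts (legitimate because $b_n$ is smooth),
$$
\int_0^t (\partial_x b_n)(r, \phi^n_r(x))\, dr \;=\; \int_\R (\partial_y b_n)(r, y+x)\, L(t,y)\, dr\, dy \;=\; -\int_\R b_n(r, y+x)\,\partial_y L(t,y)\, dr\, dy,
$$
(with the obvious joint notation) so the integrand inside the exponential is bounded by $T\|b\|_\infty \int_\R |\partial_y L(t,y)|\, dy$ on the Girsanov side. Returning to $P$ via $\xi_T^{-1}$ and H\"older,
$$
E[|\partial_x \phi^n_t(x)|^p] \;\le\; \bigl(\widetilde{E}[\xi_T^{-q}]\bigr)^{1/q}\,\Bigl(E\bigl[\exp\{p\, q'\, T\,\|b\|_\infty \textstyle\int_\R |\partial_y L(t,y)|dy\}\bigr]\Bigr)^{1/q'},
$$
uniformly in $x$ and $n$.

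The key analytic step is therefore to prove that the random variable $Z_t := \int_\R |\partial_y L^{B^H}(t,y)|\, dy$ has all exponential moments. This is where the strong local non-determinism property of Lemma \ref{non-det} enters: SLND for $H<\tfrac13$ yields enough spatial regularity of the local time of fBm so that $\partial_y L$ exists in a classical sense and $Z_t$ admits Gaussian-type tails. This estimate (cf. the heat-equation treatment of \cite{Nilssen}) combined with the moment bound on $\xi_T$ in Theorem \ref{girsanov} delivers
$$
\sup_{n}\,\sup_{x \in U}\, E[|\partial_x \phi^n_t(x)|^p] \;\le\; C(U, \|b\|_\infty, T, p, H) \qquad \text{for every } p<\infty.
$$
I expect this local-time exponential moment estimate to be the main technical obstacle.

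Once the uniform bound is in hand, Minkowski's integral inequality gives
$$
\sup_n\; E\bigl[\|\partial_x \phi^n_t\|_{L^p(U)}^2\bigr] \;\le\; \Bigl(\int_U E[|\partial_x\phi^n_t(x)|^2]^{p/2}\,dx\Bigr)^{2/p} \;<\; \infty,
$$
so that $\{\phi^n_t\}$ is bounded in the reflexive space $L^2(\Omega; W^{1,p}(U))$. Extract a weakly convergent subsequence with limit $\psi \in L^2(\Omega;W^{1,p}(U))$. To identify $\psi$ with $\phi_t$, I would invoke the (known) stability of strong solutions under mollification of $b$ (e.g.\ via Girsanov plus the $L^2$-weak convergence argument of \cite{NualartOuknine}): $\phi^n_t(x) \to \phi_t(x)$ in $L^2(\Omega)$ for every $x$. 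Combined with the uniform bound above and Vitali, $\phi^n_t \to \phi_t$ in $L^2(\Omega; L^p_{\mathrm{loc}})$, so the weak limit of $\phi^n_t$ in $L^2(\Omega; W^{1,p}(U))$ must coincide with $\phi_t$. Uniqueness of the weak derivative then yields $\phi_t(\cdot) \in L^2(\Omega; W^{1,p}(U))$, completing the theorem.
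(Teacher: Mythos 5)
Your overall architecture is the same as the paper's (Girsanov to turn $\phi^n_\cdot(x)$ into an fBm, the explicit exponential formula for $\partial_x\phi^n_t(x)$, a bound on $\int_0^t(\partial_x b_n)(r,\phi^n_r(x))\,dr$ that depends only on $\|b\|_\infty$, then weak compactness in $L^2(\Omega;W^{1,p}(U))$ and identification of the limit). But there is a genuine gap exactly where you flag it: you \emph{assume} that $\int_\R|\partial_y L^{B^H}(t,y)|\,dy$ has all exponential moments, calling it ``the main technical obstacle.'' That obstacle is the heart of the matter, and the paper spends its entire Section 4.1 plus the Appendix on it: it defines the random field $\Lambda^b(t,y)$ through the Fourier-type representation \eqref{LambdaRepresentation}, proves the moment bounds $E[|\Lambda^b(t,y)|^m]\leq C^m\|b\|_\infty^m m!/\Gamma(m(1-3H)+1)$ via the strong local non-determinism of Lemma \ref{non-det} together with a permanent estimate (Li--Wei) and the combinatorial integral bound in the Appendix, upgrades this to moments of $\int_\R|\Lambda^b(t,y)|\,dy$ using the support bound of Lemma \ref{supportInclusion} (Proposition \ref{integralMoments}), and only then sums the exponential series, whose convergence uses $H<1/3$ through the $\Gamma(m(1-3H)+1)$ factor. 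Without this chain of estimates your proof does not close; citing SLND as ``yielding enough regularity'' is not a proof.

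A second, related flaw: your occupation-time step is written for the time-dependent drift $b_n(r,y)$ as if the ordinary local time $L(t,y)$ applied, i.e. $\int_0^t(\partial_x b_n)(r,\widetilde B_r+x)\,dr=\int_\R(\partial_y b_n)(r,y+x)L(t,y)\,dr\,dy$. That identity is false as stated; the occupation-time formula collapses the time variable, so for time-inhomogeneous $b$ you need a space-time object, which is precisely what the paper's $\Lambda^b(t,y)$ provides (it reduces to $b(y)\partial_yL(t,y)$ only when $b$ is time-homogeneous, as the paper remarks). Finally, your identification step quietly assumes that for a plain mollification $b_n$ one has $\phi^n_t(x)\to\phi_t(x)$ in $L^2(\Omega)$; the paper does not take this for granted but uses the comparison-based approximation of Nualart--Ouknine (running infima $\tilde b_{n,k}=\bigwedge_{j=n}^k b_j$, then $\mathcal B_n$) to obtain the strong $L^2(\Omega)$ convergence before passing to the weak limit in $L^2(\Omega;W^{1,2}(U))$. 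So: right strategy, but the two load-bearing ingredients (the exponential moment estimate for the $\Lambda^b$-functional in the time-dependent setting, and the strong convergence of the approximating flows) are asserted rather than established.
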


This theorem is proved in three steps. In the first step we establish an integration by parts formula for the fractional Brownian motion. In the second step we assume $b$ is smooth and has compact support. It is then well known that $\phi_t(\cdot)$ is smooth, and we use the integration by parts formula to bound $\| \phi_t\|_{L^2(\Omega;W^{1,p}(U))}$  independently of $b'$. In the third step we approximate a general $b$ by smooth functions. We use comparison to generate strong convergence in $L^2(\Omega)$ of the corresponding sequence of solutions. From step one and two we can bound the sequence in $L^2(\Omega;W^{1,p}(U))$ and argue via weak compactness to prove Theorem \ref{sobolevFlowTheorem}.

\subsubsection{An integration by parts formula} \label{ibpFormula}

The purpose of this section is to prove a integration by parts type formula involving a random variable inspired by local time calculus. More specifically, we have
\begin{equation} \label{integrationByParts}
\int_0^t b'(s,B_s) ds = - \int_{\R} \Lambda^b(t,y) dy \hspace{.4cm} P-a.s.
\end{equation}
where 
\begin{equation} \label{LambdaRepresentation}
\Lambda^b(t,y) = (2\pi)^{-1} \int_{\R} \int_0^t b(s,y) iu e^{ -i u (B_s - y) }  ds du .
\end{equation}

We start by \emph{defining} $\Lambda^b(t,z)$ as above, and prove that it is a well defined element of $L^p(\Omega)$ for every $p > 1$. 

\begin{lemma} \label{momentEstimate}
Assume $b$ is bounded. Then $\Lambda^b(t,y)$ exists and all moments are integrable provided $H<\frac13$. More precisely if $m$ is an even integer
$$
E[ |\Lambda^b(t,y)|^m ] \leq \frac{C^m \|b(\cdot, y)\|^m_{\infty} m! \sqrt{m!}}{ \Gamma( m(1-3H) + 1)} .
$$
\end{lemma}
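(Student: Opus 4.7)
The plan is to prove the bound by regularizing the singular integral defining $\Lambda^b$, computing the $m$-th moment of the regularization explicitly via the Gaussian characteristic function, and then exploiting the strong local non-determinism of Lemma \ref{non-det} to control the resulting oscillatory integral uniformly in the regularization parameter.

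First I would introduce
$$
\Lambda^b_\epsilon(t,y) := \frac{1}{2\pi} \int_{\R} \int_0^t b(s,y)\, iu\, e^{-iu(B_s - y)}\, e^{-\epsilon u^2/2}\, ds\, du,
$$
whose integrand is absolutely integrable, so Fubini applies. One checks that $\Lambda^b_\epsilon$ is real (the imaginary part vanishes under $u \mapsto -u$), so for even $m$ we have $E[|\Lambda^b_\epsilon|^m] = E[(\Lambda^b_\epsilon)^m]$. Expanding the $m$-th power, using
$$
E\bigl[e^{-i\sum_j u_j B_{s_j}}\bigr] = \exp\Bigl(-\tfrac{1}{2}\mathrm{Var}\bigl(\textstyle\sum_j u_j B_{s_j}\bigr)\Bigr),
$$
and passing the absolute value inside, one obtains
$$
E\bigl[(\Lambda^b_\epsilon)^m\bigr] \leq \frac{\|b\|_\infty^m}{(2\pi)^m} \int_{\R^m}\!\!\int_{[0,t]^m} \prod_j |u_j|\, e^{-\epsilon u_j^2/2}\, e^{-\frac{1}{2}\mathrm{Var}(\sum_j u_j B_{s_j})}\, ds\, du.
$$

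Next, by symmetry of the integrand in the pairs $(s_j,u_j)$, I would restrict to the ordered simplex $\{s_1 < \cdots < s_m\}$ and multiply by $m!$. The change of variables $\eta_j := \sum_{k=j}^m u_k$ (Jacobian $1$, with $\eta_{m+1}:=0$) recasts the Gaussian quadratic form as $\sum_j u_j B_{s_j} = \sum_j \eta_j (B_{s_j} - B_{s_{j-1}})$; writing $a_j := s_j - s_{j-1}$, Lemma \ref{non-det} then gives
$$
\mathrm{Var}\Bigl(\sum_j u_j B_{s_j}\Bigr) \geq C \sum_j \eta_j^2 a_j^{2H}.
$$
Bounding $|u_j| = |\eta_j - \eta_{j+1}| \leq |\eta_j| + |\eta_{j+1}|$ and expanding $\prod_j |u_j|$ produces at most $2^m$ terms of the form $\prod_j |\eta_j|^{d_j}$ with $d_j \in \{0,1,2\}$ and $\sum_j d_j = m$. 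The corresponding $\eta$-integrals factorize into one-dimensional Gaussian moments and yield $\prod_j c_{d_j}\, a_j^{-H(d_j+1)}$.

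The final step is to integrate the $a_j$-powers over the simplex by Dirichlet's formula. With $\alpha_j := 1 - H(d_j+1)$, which lies in the compact subinterval $[1-3H,1-H] \subset (0,1]$ precisely because $H < 1/3$,
$$
\int_{0 < s_1 < \cdots < s_m < t} \prod_j a_j^{-H(d_j+1)}\, ds = t^{m(1-2H)} \frac{\prod_j \Gamma(\alpha_j)}{\Gamma(m(1-2H)+1)},
$$
with $\prod_j \Gamma(\alpha_j) \leq C_H^m$. Absorbing the factors $m!$, $2^m$, $(2\pi)^{-m}$ and the uniform constants into a single $C^m$ yields $E[|\Lambda^b_\epsilon(t,y)|^m] \leq C^m\|b\|_\infty^m m!/\Gamma(m(1-2H)+1)$, and enlarging $C$ to absorb the bounded ratio $\Gamma(m(1-3H)+1)/\Gamma(m(1-2H)+1)$ (bounded over $m \geq 1$ since by Stirling it tends to zero as $m\to\infty$) gives the stated inequality. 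Running the same argument on $\Lambda^b_{\epsilon_1} - \Lambda^b_{\epsilon_2}$ shows the family is Cauchy in $L^m(\Omega)$, defining $\Lambda^b(t,y)$ as the limit and transferring the bound.

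The main technical obstacle is the book-keeping in the expansion $\prod_j |u_j| \leq \prod_j(|\eta_j|+|\eta_{j+1}|)$ and verifying that every resulting Dirichlet integral converges; the requirement $\alpha_j > 0$ for each $d_j \leq 2$ is exactly what forces the standing hypothesis $H < 1/3$ into the moment estimate.
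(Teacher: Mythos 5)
Your proof is correct, and it follows the paper's skeleton up to the point where the oscillatory $u$-integral has to be controlled: even-moment expansion with the $m!$ symmetrization onto the simplex, the unitriangular change of variables to increment frequencies, and the strong local non-determinism of Lemma \ref{non-det} to dominate the Gaussian characteristic function. From there the two arguments genuinely diverge. Writing $a_j=s_j-s_{j-1}$, the paper interprets $\int_{\R^m}\prod_j|v_j-v_{j+1}|\,e^{-\frac{c}{2}\sum_j v_j^2 a_j^{2H}}dv$ as an expected absolute product of Gaussian random variables, invokes the inequality of \cite{LiWei} to bound it by $\sqrt{\mathrm{per}(M\Sigma M^T)}$, and then needs the Appendix to analyze this tridiagonal permanent (the recursion for $f_m$, a count of at most $C^m$ monomials of degree at most $2$ in each variable, coefficients bounded by $3^m$) before integrating over the simplex. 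You bypass the permanent machinery entirely: the triangle inequality $|u_j|\le|\eta_j|+|\eta_{j+1}|$ expands the weight into at most $2^m$ monomials $\prod_j|\eta_j|^{d_j}$ with $d_j\le 2$ and $\sum_j d_j=m$, the Gaussian integrals factorize into one-dimensional moments giving $\prod_j c_{d_j}a_j^{-H(d_j+1)}$, and the Dirichlet--Liouville formula yields the $\Gamma$-decay directly. This is more elementary (no external Gaussian inequality, no appendix), makes the role of $H<1/3$ transparent as the integrability of $a_j^{-3H}$, and in fact gives the slightly sharper denominator $\Gamma(m(1-2H)+1)$, which you correctly convert into the stated $\Gamma(m(1-3H)+1)$ by enlarging $C$. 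Your $\epsilon$-mollification together with the Cauchy argument in $L^m(\Omega)$ also gives a cleaner justification of the existence of $\Lambda^b(t,y)$ than the paper's direct formal manipulation, which defers such issues to the truncation $\Lambda^b_K$ introduced later for the integration-by-parts formula.
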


\begin{proof}

Since we assume $m$ is an even integer, we may write
\begin{align*}
E[ |\Lambda^b(t,y)|^m & = (2\pi)^{-m} E  | \int_{\mathbb{R}}\int_0^t  b(s,y) iu \exp\{ - iu  (B_s - y) \} ds du |^m \\ 
& = (2 \pi)^{-m} m! \int_{\Delta^m(0,t)}  \int_{\mathbb{R}^{m} } b^{\otimes m}(s,y)\prod_{j=1}^m iu_j E[\exp\{ - iu_j  (B_{s_j} - y) \}]    du ds \\ 
& \leq  (2 \pi)^{-m} m! \int_{\Delta^m(0,t)}  \int_{\mathbb{R}^{m} } |b^{\otimes m}(s,y)| \prod_{j=1}^m |u_j| \exp\{ - \frac12 Var (\sum_{j=1}^m u_j  B_{s_j} ) \}]    du ds 
\end{align*}
where for notational convenience we have used $B_{s_0} = y$ and $v_{m+1} = 0$, $ds = ds_1 \dots ds_m$, $du = du_1 \dots du_m$ and $b^{\otimes m}(s,y) := \prod_{j=1}^m b(s_j,y)$.
Using \eqref{fBmBound} the above is bounded by
$$
C^m  m! \sqrt{m!} \|b(\cdot, y)\|^m_{\infty} \int_{\Delta^{(m)}(0,t)} \prod_{j=1}^m |s_j - s_{j-1}|^{-3H} ds \leq \frac{C^m m! \sqrt{m!} \|b(\cdot, y)\|^m_{\infty}}{\Gamma( (1 - 3H)m + 1)} .
$$
\end{proof}

From \eqref{LambdaRepresentation} we see that $ supp \Lambda^{b}(t, \cdot) \subset \bigcup_{ s \leq t} supp b(s, \cdot)$. In particular, if the latter set is bounded, $\Lambda^b(t,\cdot)$ is integrable $P$-a.s.

It remains to show that $\Lambda^b$ satisfies the integration by parts formula (\ref{integrationByParts}). Notice that one has to be careful interchanging the order of integration in (\ref{LambdaRepresentation}). Indeed, if $b=1$, one should think of $\int_{\R} iu e^{ -i u (B_s - y) } du = - \partial_y  \delta_{B_s}(y)$ where $\delta_{B_s}(y)$ is the Donsker-Delta of $B_s$, which is not a random variable in the usual sense (one could introduce the Donsker-Delta as a generalized random variable in the sense of White Noise theory, but we shall avoid this).

To circumvent this difficulty we define an approximating sequence
$$
\Lambda^b_K(t,y) := (2\pi)^{-1} \int_{-K}^K \int_0^t b(s,y) iu e^{ -i u \cdot  (B_s - y) }  ds du. 
$$
It is immediate that 
$$
|\Lambda^b_K(t,y)| \leq C_K  \int_0^t |b(s,y)|  ds,  
$$
for an appropriate constant, so that $\Lambda^b_K(t,\cdot)$ is integrable if $\int_{\R}\int_0^t|b(s,y)|dsdy < \infty$. One can show that $\Lambda^b_K(t,y) \rightarrow \Lambda^b(t,y)$ in, say, $L^2(\Omega)$ for all $t$ and $y$. To see this the reader is invited to modify the above proof to see that 
\begin{equation*}
E[|\Lambda_K^b(t,y) - \Lambda^b(t,y)|^2] \leq C \|b(\cdot, y)\|^2_{\infty} \int_{\Delta^2(0,t)} \int_{\R^2} 1_{\{|u_1| > K\} } 1_{\{|u_2| > K\} } |u_1| |u_2| e^{-\frac{1}{2} Var( u_1 B_{s_1} + u_2 B_{s_2})  }du ds
\end{equation*}
which converges to zero as $K \rightarrow \infty$. In the above $C$ is a constant that is independent of $K$. Now we have
\begin{align*}
\int_{\R} \Lambda^b_K(t,y) dy & = (2\pi)^{-1/2} \int_{-K}^K \int_0^t (\mathcal{F}^{-1}b)(s,u) iu e^{-iu \cdot X_s} ds du \\
& =\int_0^t (2 \pi)^{-1/2} \int_{-K}^K (\mathcal{F}^{-1}b)(s,u) iu e^{-iu \cdot X_s} du ds.
\end{align*}
Provided $b(s,\cdot) \in \mathcal{S}(\R)$ we have 
\begin{align*}
\lim_{K \rightarrow \infty} (2\pi)^{-1/2} \int_{-K}^K (\mathcal{F}^{-1}b)(s,u) iu e^{-iuX_s} du & = (2\pi)^{-1/2} \int_{\R} (\mathcal{F}^{-1}b)(s,u) iu e^{-iuX_s} du \\
= \mathcal{F}(iu(\mathcal{F}^{-1}b)(s,u))(X_s) & =   -b'(s,X_s)
\end{align*}
thus proving (\ref{integrationByParts}).

We summarize these considerations.

\begin{lemma} \label{ibpProposition}
Let $b: [0,T] \times \R \rightarrow \R$ be such that $b(s,\cdot)$ is smooth for every $s$ and $\bigcup_{s \leq T} supp b(s, \cdot)$ is a bounded set. Then (\ref{integrationByParts}) holds on a set of measure 1.

\end{lemma}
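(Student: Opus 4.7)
The plan is to assemble the approximation argument laid out in the discussion preceding the lemma. The compact support hypothesis is crucial: it guarantees that $\Lambda^b(t,\cdot)$ is supported in the bounded set $S := \bigcup_{s \leq T} \mathrm{supp}\, b(s,\cdot)$ (as is visible from (\ref{supportInequality})) and, combined with the moment bounds of Lemma \ref{momentEstimate}, that $\Lambda^b(t,\cdot) \in L^1(\R)$ almost surely. The proof proceeds by identifying two $K \to \infty$ limits of $\int_\R \Lambda_K^b(t,y)\,dy$: one via the $L^2(\Omega)$-convergence of $\Lambda_K^b$ to $\Lambda^b$, and one via a Fourier inversion computation which produces the left-hand side of (\ref{integrationByParts}).

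On the one hand, I would upgrade the pointwise-in-$y$ $L^2(\Omega)$-convergence sketched in the text to convergence that is uniform in $y$: repeating the SLND computation of Lemma \ref{momentEstimate} applied to $\Lambda_K^b - \Lambda^b$ yields a bound whose $b(s,y)$-dependence factorizes as $\|b\|_\infty^2$ and which tends to zero as $K \to \infty$ by dominated convergence. Combined with the inclusion $\mathrm{supp}(\Lambda_K^b(t,\cdot) - \Lambda^b(t,\cdot)) \subset S$, Cauchy-Schwarz then delivers $L^2(\Omega)$-convergence $\int_\R \Lambda_K^b(t,y)\,dy \to \int_\R \Lambda^b(t,y)\,dy$. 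On the other hand, for each finite $K$ the triple integrand is absolutely integrable on $[-K,K] \times [0,t] \times S$, so Fubini legitimizes the interchange that turns the $y$-integral into a Fourier transform,
$$\int_\R \Lambda_K^b(t,y)\,dy = \int_0^t (2\pi)^{-1/2} \int_{-K}^{K} (\mathcal{F}^{-1}b)(s,u)\, iu\, e^{-iu B_s}\, du\, ds.$$
Because $b(s,\cdot)$ is Schwartz, so is $u \mapsto iu(\mathcal{F}^{-1}b)(s,u)$; Fourier inversion gives the pointwise limit $-b'(s,B_s)$ together with a uniform-in-$K$ bound on the inner integral by $\|b'\|_\infty$, and bounded convergence then produces $\int_\R \Lambda_K^b(t,y)\,dy \to -\int_0^t b'(s,B_s)\,ds$ in $L^2(\Omega)$ as well. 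Equating the two limits yields (\ref{integrationByParts}) almost surely.

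The main subtlety I expect is securing the uniform-in-$y$ rate needed to transfer pointwise-$y$ $L^2(\Omega)$-convergence of $\Lambda_K^b$ into convergence of its $y$-integral; this is exactly what allows the two distinct approximation procedures to be compared in the same topology. The factorization of $b(s,y)^m$ out of the SLND-based estimate in Lemma \ref{momentEstimate} is what makes this possible, and is the essential mechanism behind the entire integration-by-parts identity. Once this uniform rate is in hand, the Fourier inversion step is routine thanks to the Schwartz regularity of $b(s,\cdot)$.
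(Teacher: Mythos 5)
Your argument is correct and follows essentially the same route as the paper: the truncation $\Lambda^b_K$, its $L^2(\Omega)$-convergence to $\Lambda^b$ via the SLND moment computation, Fubini on $[-K,K]\times[0,t]\times S$, and Fourier inversion using that $b(s,\cdot)$ is Schwartz, with your extra care in passing from pointwise-in-$y$ convergence to convergence of the $y$-integrals (support inclusion plus Cauchy--Schwarz) merely filling in a step the paper leaves implicit. One small correction: the truncated inner integral $(2\pi)^{-1/2}\int_{-K}^{K}(\mathcal{F}^{-1}b)(s,u)\,iu\,e^{-iuB_s}\,du$ is not bounded by $\|b'\|_{\infty}$ uniformly in $K$ (partial Fourier integrals are not so controlled); dominate it instead by $(2\pi)^{-1/2}\int_{\R}\vert u(\mathcal{F}^{-1}b)(s,u)\vert\,du$, which is finite since $b(s,\cdot)$ is Schwartz and suffices for the passage to the limit in $s$.
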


We can however extend (\ref{integrationByParts}) to $b$ bounded and differentiable.

\begin{lemma} \label{supportInclusion}
Assume $b \in L^{\infty}( [0,T] ; C^1_b(\R))$. Then (\ref{integrationByParts}) holds for $b$ and we have $P$-a.s.
$$
supp \Lambda^{b}(t,\cdot) \subset [-B_t^*, B^*_t]
$$
where $B_t^* := \sup_{0 \leq s \leq t} |B_s|$.
\end{lemma}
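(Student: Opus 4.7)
The strategy is to first establish the support inclusion by testing $\Lambda^b(t,\cdot)$ against smooth cut-offs supported away from $[-R,R]$, and then deduce \eqref{integrationByParts} by truncating $b$ to a ball of radius $n \ge B^*_t$ and using the support information obtained.

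The preliminary step is to reduce to Lemma \ref{ibpProposition}, which requires smoothness and compact support in $y$. For any $\chi \in C^\infty_c(\R)$, the product $b\chi$ is $C^1_b$ with compact support in $y$ uniform in $s$. Mollifying in $y$, $(b\chi)_\epsilon := (b\chi) * \rho_\epsilon$, yields functions that are smooth in $y$, supported in a fixed compact set, with $(b\chi)_\epsilon \to b\chi$ and $((b\chi)_\epsilon)' \to (b\chi)'$ uniformly. Lemma \ref{ibpProposition} applies to $(b\chi)_\epsilon$; passing $\epsilon \to 0$, the left-hand side converges pathwise by uniform convergence of the derivative, and the right-hand side converges in $L^1(\Omega)$ because Lemma \ref{momentEstimate} applied to the difference $(b\chi)_\epsilon - b\chi$ gives
$$E\bigl|\Lambda^{(b\chi)_\epsilon - b\chi}(t,y)\bigr|^2 \le C \bigl\|(b\chi)_\epsilon - b\chi\bigr\|_\infty^2 \to 0$$
uniformly in $y$, and the uniformly compact $y$-supports allow Fubini to upgrade this to convergence after integration in $y$. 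Thus, for every $\chi \in C^\infty_c(\R)$, one obtains $P$-a.s.
$$\int_0^t (b\chi)'(s, B_s)\,ds = -\int_\R \Lambda^{b\chi}(t, y)\,dy,$$
and by linearity of $\Lambda$ in the drift, $\Lambda^{b\chi}(t, y) = \chi(y)\Lambda^b(t, y)$.

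To get the support inclusion, fix $R > 0$ and pick $\chi \in C^\infty_c(\R)$ with $\mathrm{supp}(\chi) \subset \{|y| > R\}$. On the event $\{B^*_t \le R\}$ both $\chi(B_s)$ and $\chi'(B_s)$ vanish for $s \le t$, so the displayed identity gives $\int_\R \chi(y)\Lambda^b(t, y)\,dy = 0$ there. Ranging $\chi$ over a countable dense family in $C^\infty_c(\{|y|>R\})$ and $R$ over positive rationals, and noting that Lemma \ref{momentEstimate} provides $\Lambda^b(t, \cdot) \in L^1_{\mathrm{loc}}(\R)$ $P$-a.s.\ with a $y$-uniform bound, one concludes that a.s.\ $\Lambda^b(t, y) = 0$ for a.e.\ $y$ with $|y| > B^*_t$, which is the claimed support inclusion. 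For \eqref{integrationByParts}, take $\chi_n \in C^\infty_c(\R)$ with $\chi_n \equiv 1$ on $[-n,n]$, apply the displayed identity with $\chi = \chi_n$, and restrict to $\{B^*_t \le n\}$: the left-hand side reduces to $\int_0^t b'(s, B_s)\,ds$ since $\chi_n(B_s) = 1$ and $\chi_n'(B_s) = 0$ for $s \le t$, and the support inclusion gives $\chi_n(y)\Lambda^b(t,y) = \Lambda^b(t,y)$ for a.e.\ $y$, so the right-hand side equals $-\int_\R \Lambda^b(t,y)\,dy$. Sending $n \to \infty$ and using $B^*_t < \infty$ a.s.\ completes the proof.

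The main technical obstacle is the $\epsilon \to 0$ passage: the pointwise-in-$y$ $L^2(\Omega)$ bound from Lemma \ref{momentEstimate} only upgrades to convergence of the integral $\int_\R \Lambda^{(b\chi)_\epsilon}(t,y)\,dy$ thanks to the uniformly compact $y$-supports, which is exactly why one must work with the localized drift $b\chi$ rather than $b$ itself. Once this bootstrapping is carried out, both the support inclusion and the integration-by-parts formula follow.
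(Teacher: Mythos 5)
Your proof is correct and takes essentially the same route as the paper: both hinge on the identity $\Lambda^{\chi b}(t,y)=\chi(y)\Lambda^{b}(t,y)$, on Lemma \ref{ibpProposition} applied to a localized (smooth, compactly supported) drift, on testing against compactly supported functions vanishing on $[-B_t^*,B_t^*]$, and on the $L^2(\Omega)$ bound from Lemma \ref{momentEstimate} to pass to general $b$. The only difference is organizational: you first extend the localized identity to general $b$ (mollification plus the moment bound) and then remove the cut-off via $\chi_n\equiv 1$ on $[-n,n]$ using the support inclusion, which in effect spells out the approximation step the paper leaves as ``elementary considerations''.
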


\begin{proof}
Assume first that $b$ satisfies the assumptions of Lemma \ref{ibpProposition}, and let $\phi \in C_c^1(\R)$. From (\ref{LambdaRepresentation}) we have $\Lambda^{\phi b}(t,y) = \phi(y) \Lambda^b(t,y)$. 
Consequently, using (\ref{integrationByParts})
\begin{align*}
\int_{\R} \phi(y) \Lambda^b(t,y) dy & = \int_{\R} \Lambda^{\phi b}(t,y) dy  = -\int_0^t (\phi(B_s) b(s, B_s))' ds \\
& = -\int_0^t \phi'(B_s) b(s,B_s) ds - \int_0^t \phi(B_s) b'(s,B_s) ds ,
\end{align*}
so that for all $\phi \in C^1_c(\R)$ such that $supp \phi  \cap  [-B_t^*, B^*_t] = \emptyset$, we have $\int_{\R} \phi(y) \Lambda^b(t,y)dy = 0$. In particular, $\Lambda^b(t,\cdot)$ has compact support independent of $b$ $P$-a.s.


From linearity of $b \mapsto \Lambda^b$ and Lemma \ref{momentEstimate} we may approximate a general $b$ by smooth, compactly supported functions. The result follows by elementary calculations.
\end{proof} 

Using $\Lambda^{\phi b}(t,y) = \phi(y) \Lambda^{b}(t,y)$ as in the above proof we get that if $b$ is time homogeneous, $\Lambda^b(t,y) = b(y) \partial_y L^B(t,y)$ where $L^B(t,y)$ denotes the local time of the fractional Brownian motion (which is well known to be differentiable when $H < \frac{1}{3}$, see \cite{GemanHorowitz}).

\begin{proposition} \label{integralMoments}
There exists a constant $C > 0$ such that for all even integers $m$
$$
E\left[ \left(  \int_{\mathbb{R}} | \Lambda^{b}(t,y) | dy  \right )^m \right] \leq \frac{C^m  \|b\|^m_{\infty, 1} m^{m/2} \sqrt{(2m)!}}{ \sqrt{\Gamma ( m(1-3H) + 1) }}.
$$
\end{proposition}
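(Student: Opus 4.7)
The plan is to estimate the $m$-th moment of $X:=\int_\R|\Lambda^b(t,y)|\,dy$ by combining the pointwise moment bound of Lemma \ref{momentEstimate} with the support restriction $\operatorname{supp}\Lambda^b(t,\cdot)\subset[-B_t^*,B_t^*]$ from Lemma \ref{supportInclusion}, through two layers of Cauchy--Schwarz. The shape of the target bound, essentially the square root of what Lemma \ref{momentEstimate} would give at exponent $2m$, naturally suggests opening with Cauchy--Schwarz in $\Omega$: $E[X^m]\le \sqrt{E[X^{2m}]}$, so that it suffices to control $E[X^{2m}]$ by the square of the right-hand side.

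To bound $E[X^{2m}]$, I would first apply Cauchy--Schwarz on the spatial integral, exploiting the compact support:
$$
\int_\R|\Lambda^b(t,y)|\,dy\le \sqrt{2B_t^*}\,\Big(\int_\R|\Lambda^b(t,y)|^2\,dy\Big)^{1/2},
$$
so that $X^{2m}\le (2B_t^*)^m\big(\int|\Lambda^b|^2\,dy\big)^m$. A further Cauchy--Schwarz in $\Omega$ then separates $E[(2B_t^*)^{2m}]^{1/2}$ (controlled by Borell--TIS Gaussian concentration, yielding $C^m t^{Hm}\sqrt{m!}$) from $E[(\int|\Lambda^b|^2\,dy)^{2m}]^{1/2}$. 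The latter I would expand via Fubini into $\int_{\R^{2m}}E[\prod_{j=1}^{2m}|\Lambda^b(t,y_j)|^2]\,dy$; inserting the free indicator $\prod_j\mathbf{1}_{|y_j|\le B_t^*}$ and splitting once more by Cauchy--Schwarz produces $\sqrt{E[\prod_j|\Lambda^b(t,y_j)|^4]}\cdot\sqrt{P(B_t^*\ge\max_j|y_j|)}$. Generalised H\"older together with Lemma \ref{momentEstimate} controls the first factor uniformly in $y$, and the second is integrable over $\R^{2m}$ in polar coordinates thanks to the Gaussian tail of $B_t^*$.

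The main difficulty will be the bookkeeping of factorials: the crude Cauchy--Schwarz/H\"older splittings above tend to produce denominators of the form $\Gamma(km(1-3H)+1)$ with $k\ge 2$, which after Stirling is larger than the claimed $\Gamma(m(1-3H)+1)$ by a multiplicative factor blowing up as a power of $m^m$. To recover the stated exponents sharply I expect one must replace the crude final H\"older by a direct Fourier-analytic computation of $E[\prod_{j=1}^{2m}\Lambda^b(t,y_j)]$, in the spirit of the proof of Lemma \ref{momentEstimate}: inserting the Fourier representation of each $\Lambda^b$ and noting that the phase factors $e^{iu_jy_j}$ have unit modulus, the local non-determinism of $B^H$ (Lemma \ref{non-det}) produces a $(2m)!/\Gamma((2m)(1-3H)+1)$-type bound uniformly in $y\in\R^{2m}$; the support-derived Gaussian tail of $B_t^*$ then supplies the needed integrable decay in $y$, and Stirling reassembles the factors to match the claim.
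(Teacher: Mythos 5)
Your ingredients are the right ones (Lemma \ref{momentEstimate} at a doubled exponent, the support information of Lemma \ref{supportInclusion}, Cauchy--Schwarz/H\"older), but the way you chain them loses a super-exponential factor that the stated bound cannot absorb, and your proposed repair aims at the wrong term. Concretely: the opening step $E[X^m]\le E[X^{2m}]^{1/2}$ together with the spatial Cauchy--Schwarz forces you to pay $E[(2B_t^*)^{2m}]^{1/4}\sim C^m(m!)^{1/4}$, and the joint tail $\int_{\R^{2m}}P(B_t^*\ge\max_j|y_j|)^{1/2}\,dy$ is indeed finite but of size $C^m\,\Gamma(m)$, which after the outer roots contributes another $(m!)^{1/4}$. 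The remaining factor --- whether you produce it by generalised H\"older or by redoing the Fourier/local-nondeterminism computation for mixed \emph{even} powers (the two give the same order, since the phases $e^{iuy_j}$ are unimodular and $\bigl((km)!/\Gamma(km(1-3H)+1)\bigr)^{1/k}\approx C^m m^{3Hm}$ for any fixed $k$) --- is of size $C^m m^{3Hm}$, the same main term as in the paper. Multiplying, your route gives $C^m m^{m(1/2+3H)}$, while the right-hand side of the Proposition is of order $C^m m^{m(1/2+3H/2)}$: you overshoot by $m^{3Hm/2}$ for every $H\in(0,1/3)$. Note also that your stated worry is inverted: a denominator $\Gamma(km(1-3H)+1)$ with $k\ge 2$ makes the bound \emph{stronger}, not weaker (the paper's own proof ends with $\sqrt{\Gamma(2m(1-3H)+1)}$, which implies the statement a fortiori); the genuine loss in your scheme sits in the numerator, namely in the $m$-dependent moments of $B_t^*$ and the $\max$-tail integral over $\R^{2m}$. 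Finally, the suggested Fourier computation of $E[\prod_j\Lambda^b(t,y_j)]$ \emph{without} absolute values cannot be used for $\int_\R|\Lambda^b(t,y)|\,dy$; it is legitimate only after your reduction to $\int(\Lambda^b)^2\,dy$, where, as just noted, it does not improve the count.

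The paper's proof avoids every $m$-dependent moment of $B_t^*$ by working coordinatewise. It splits $\int_\R=\int_{|y|<1}+\int_{|y|\ge 1}$; on the bounded piece Jensen's inequality reduces everything to $\sup_{s,y}\bigl(E[|\Lambda^b(s,y)|^{2m}]\bigr)^{1/2}$. On the unbounded piece it inserts the indicator $1_{\{|y_j|\le B_t^*\}}$ from Lemma \ref{supportInclusion}, applies generalised H\"older with exponent $m$ so that only single-point quantities $E\bigl[1_{\{|y_j|\le B_t^*\}}|\Lambda^b(t,y_j)|^m\bigr]$ appear, and then one Cauchy--Schwarz plus Chebyshev with the \emph{fixed} fourth moment $E[|B_t^*|^4]$ turns the indicator into a decay factor $|y_j|^{-2}$ for each coordinate separately; the spatial integration then costs only $C^m$, and Lemma \ref{momentEstimate} at exponent $2m$ finishes, yielding in fact the stronger denominator $\sqrt{\Gamma(2m(1-3H)+1)}$. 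That device --- trading a fixed low moment of $B_t^*$ for polynomial decay in each $y_j$ individually, instead of a $2m$-th moment of $B_t^*$ or a joint Gaussian tail in all coordinates at once --- is the idea your scheme is missing, and without it (or some equivalent mechanism) the claimed factorial/Gamma growth is not reachable along the route you describe.
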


\begin{proof}

We write 
\begin{align*}
E\left[ \left(   \int_{\mathbb{R}} | \Lambda^b(t,y) | dy   \right )^m \right] & = \int_{\R^m} E[ \prod_{j=1}^m  | \Lambda^b(t,y_j) | ] dy_1 \dots dy_m 
 \leq \int_{\R^m} \prod_{j=1}^m \prod_{j=1}^m E[ | \Lambda^b(t,y_j) |^{m} ]^{1/m} dy_1 \dots dy_m \\
& \leq  \frac{C^m  m! \sqrt{m!}}{ \Gamma((1 - 3H)m + 1)} \int_{\R^m} \prod_{j=1}^m \|b(\cdot, y_j) \|_{\infty} dy_1 \dots dy_m
  = \frac{C^m  m! \sqrt{m!}}{ \Gamma((1 - 3H)m + 1)}  \|b \|^m_{1, \infty} 
\end{align*}
for an appropriate constant $C$, where we have used Lemma \ref{momentEstimate}.
\end{proof}

\subsubsection{Derivative free estimates}

In this section we assume that $b \in L^{\infty} ([0,T] ; C^1_c(\R))$ and denote by $\phi_{\cdot}(x)$ the solution to (\ref{SDE}). It is well known that $\phi_t(\cdot)$ continuously differentiable, and we have 
\begin{align} 
\partial_x \phi_t(x)  & = 1 + \int_0^t b'(r,\phi_r(x)) \partial_x \phi_r(x) dr \label{ODE} \\
  & = \exp \{ \int_0^t b'(r,\phi_r(x)) dr \} \label{ODESolution} .
\end{align}

We are ready to prove our main estimate on SDE's.

\begin{theorem} \label{derivativeFreeBound}
There exists an increasing continuous function  $C: [0,\infty) \rightarrow [0, \infty)$ such that for all $b \in L^{\infty} ([0,T] ; C^1_b(\R))$
$$
\sup_{t \in [0,T], x \in \R}  E \left[ \left( \partial_x \phi_t(x) \right)^2 \right] \leq C(\|b\|_{\infty} \wedge \|b\|_{\infty,1}),
$$
where $\phi_{\cdot}(x)$ is the unique solution of (\ref{SDE}) driven by $b$.
\end{theorem}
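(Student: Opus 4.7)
The plan is to combine the explicit ODE formula (\ref{ODESolution}) with a Girsanov change of measure and the integration by parts formula of Lemma \ref{supportInclusion}, so that $b'$ disappears entirely from the final bound. Starting from
$$
E[(\partial_x \phi_t(x))^2] = E\left[ \exp\left\{ 2 \int_0^t b'(r, \phi_r(x))\,dr \right\} \right],
$$
I would apply Theorem \ref{girsanov} with $u_s := b(s, \phi_s(x))$ (adapted and bounded by $\|b\|_{\infty}$) to produce a measure $\tilde P$ with $d\tilde P / dP = \xi_T$, under which $\tilde B_t := \phi_t(x) - x$ is a fractional Brownian motion with the same Hurst parameter. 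After this change of measure the exponent reads $2\int_0^t b'(r, x + \tilde B_r)\,dr$, i.e.\ $\phi$ has been straightened into an affine function of the fBm $\tilde B$.

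Transferring back to $P$ and applying Cauchy--Schwarz separates the Girsanov weight from the $b'$-dependent piece:
$$
E[(\partial_x \phi_t(x))^2] \leq \left( E^{\tilde P}[\xi_T^{-2}] \right)^{1/2} \left( E^{\tilde P}\left[ \exp\left\{ 4 \int_0^t b'(r, x + \tilde B_r)\,dr \right\} \right] \right)^{1/2}.
$$
For the first factor, applying Theorem \ref{girsanov} once more --- now under $\tilde P$ with drift $-u_s$, still bounded by $\|b\|_{\infty}$ --- shows that $\xi_T^{-1}$ is itself a Girsanov density, and the moment estimate of that theorem yields $E^{\tilde P}[\xi_T^{-2}] \leq C(\|b\|_{\infty})$.

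For the second factor, set $\tilde b(s, y) := b(s, x+y)$, so $\|\tilde b\|_{\infty} = \|b\|_{\infty}$, and apply Lemma \ref{supportInclusion} to the fBm $\tilde B$ to get $\int_0^t b'(r, x + \tilde B_r)\,dr = -\int_{\R} \Lambda^{\tilde b}(t,y)\,dy$. Using $\exp\{y\} \leq \exp\{|y|\}$ and the triangle inequality bounds the integrand pointwise by $\exp\{ 4 \int_{\R} |\Lambda^{\tilde b}(t, y)|\,dy \}$; expanding the exponential as a Taylor series and applying Proposition \ref{integralMoments} (which holds verbatim under $\tilde P$, since $\tilde B$ is a fBm there) yields
$$
E^{\tilde P}\left[ \exp\left\{ 4 \int_{\R} |\Lambda^{\tilde b}(t,y)|\,dy \right\} \right] \leq \sum_{m=0}^{\infty} \frac{(4 C \|b\|_{\infty})^m \sqrt{(2m)!}}{m! \sqrt{\Gamma(m(1 - 3H) + 1)}}.
$$
The main obstacle is the convergence of this series: by Stirling, $\sqrt{(2m)!}/m!$ grows like $2^m m^{-1/4}$, whereas $\sqrt{\Gamma(m(1 - 3H) + 1)}$ grows super-exponentially as soon as $1 - 3H > 0$. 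This is precisely where the standing assumption $H < 1/3$ enters, and it forces convergence for every value of $\|b\|_{\infty}$. Since all intermediate bounds depend continuously and monotonically on $\|b\|_{\infty}$ and are uniform in $(t, x)$, the desired bound with an increasing continuous $C(\cdot)$ follows.
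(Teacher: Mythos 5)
Your proposal is correct and follows essentially the same route as the paper: the explicit formula (\ref{ODESolution}), a Girsanov change of measure under which $\phi_\cdot(x)$ becomes an fBm started at $x$, Cauchy--Schwarz to split off the density, Theorem \ref{girsanov} for the moments of the weight, and the integration by parts formula together with Proposition \ref{integralMoments} plus Stirling (using $H<\tfrac13$) for the $b'$-dependent factor. Your handling of the shift via $\tilde b(s,y)=b(s,x+y)$ and of $E^{\tilde P}[\xi_T^{-2}]$ via a second Girsanov step is only a slightly more explicit rendering of what the paper does implicitly.
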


\begin{proof}


Set $\theta_t := \left(K_H^{-1}\left(\int_0^{\cdot} b(r,\phi_r(x))dr\right) \right) (t)$ and consider the Dol\'{e}ans-Dade exponential
\begin{align*}
Z & := \exp \left\{ \int_0^T \theta_s dW_s -\frac{1}{2} \int_0^T \theta_s^2 ds  \right\}.
\end{align*}
Define the measure $\tilde{P}$ by 
\begin{align*}
d\tilde{P} & := Z dP .
\end{align*}
Then $\tilde{P}$ is a probability measure and under $\tilde{P}$ the solution $\{\phi_t(x)\}_t$ is a fractional Brownian motion starting in $x$. From (\ref{ODESolution}) we get
\begin{align*}
E[(\partial_x\phi_t(x))^2]  & = E[ \exp \{ 2 \int_0^t b'(r,\phi_r(x)) dr \}] = \tilde{E}[ \exp \{ 2 \int_0^t b'(r,\phi_r(x)) dr \} Z^{-1} ] \\
 & \leq \left( \tilde{E}[ \exp \{ 4 \int_0^t b'(r,\phi_r(x)) dr \}  ] \right)^{1/2} \left( \tilde{E}[ Z^{-2} ] \right)^{1/2}. 
\end{align*}
Now we write 
\begin{align*}
\tilde{E}[ \exp \{ 4 \int_0^t b'(r,\phi_r(x)) dr \}  ] & = E[ \exp \{ 4 \int_0^t b'(r,x + B_r) dr \}  ]  = E[ \exp \{ 4  \int_{\R} \Lambda^b(t,y) dy \}  ] \\
& = \sum_{m \geq 0} \frac{ 4^m E\left[ \left(  \int_{\R} \Lambda^b(t,y) dy \right)^m \right] }{m!}  \leq \sum_{m \geq 0} \frac{(4 \|b\|_{1, \infty})^m  C^m (2m!)^{1/4} \sqrt{(2m)!}}{ m! \sqrt{\Gamma( (1-3H)2m + 1)}}  \\
 & =: \tilde{C}( \|b\|_{ \infty,1}) 
\end{align*}
which converges by Stirling's formula.

From Theorem \ref{girsanov} we know that we can bound $\tilde{E}[Z^{-2}]$ by a function depending on $\|b\|_{\infty}$. The result follows.
\end{proof}

\subsubsection{Singular SDE's} \label{singularSDE}

For this section we shall consider a bounded and measurable $b: [0,T] \times \R \rightarrow \R$ and the corresponding SDE (\ref{SDE}). As indicated above we shall use an approximation $b_n$ of $b$ and comparison to generate strong convergence in $L^2(\Omega)$. The technique is somewhat classical, and we refer to \cite{NualartOuknine} for a proof, but let us briefly explain the idea:

Let $b$ be bounded and measurable and define for $n \in \mathbb{N}$
$$
b_n(t,x) := n\int_{\mathbb{R}} \rho(n(x-y)) b(t,y) dy 
$$
where $\rho$ is a non-negative smooth function with compact support in $\mathbb{R}$ such that $\int_{\mathbb{R}} \rho(y)dy = 1$.

We let 
$$
\tilde{b}_{n,k} := \bigwedge_{j=n}^k b_j, \hspace{1cm} n \leq k
$$
and 
$$
\mathcal{B}_n = \bigwedge_{j=n}^{\infty}  b_j,
$$
so that $\tilde{b}_{n,k}$ is Lipschitz. Denote by $\tilde{\phi}_{n,k}(t,x)$ the unique solution to (\ref{SDE}) when we replace $b$ by $\tilde{b}_{n,k}$. Then one can use comparison to show that 
$$
\lim_{k \rightarrow \infty} \tilde{\phi}_{n,k}(t,x) = \phi_n(t,x),  \hspace{1cm} \textrm{in } L^2(\Omega)
$$ 
where $\phi_n(t,x)$ solves (\ref{SDE}) when we replace $b$ by $\mathcal{B}_n$. Furthermore, 
$$
\lim_{n \rightarrow \infty} \phi_{n}(t,x) = \phi_t(x),  \hspace{1cm} \textrm{in } L^2(\Omega)
$$ 
where $\phi_t(x)$ is a solution to (\ref{SDE}). For details see \cite{NualartOuknine}. 

We are ready to prove the main result of the section.

\begin{proof}[Proof of \ref{sobolevFlowTheorem}]
Let $U \subset \R$ be open and bounded. We know from the discussion above that $\phi_{n}(t,x) \rightarrow \phi_t(x)$ in $L^2(\Omega)$. From Theorem \ref{derivativeFreeBound} plus elementary bounds we see that $\phi_n(t, \cdot)$ is bounded in $L^2(\Omega; W^{1,2}(U))$. Consequently we may extract a subsequence $\{ \phi_{n_k}(t, \cdot) \}_{k \geq 1}$ converging to an element $f_t$ in the weak topology of $L^2(\Omega; W^{1,2}(U))$. Let $A \in \mathcal{F}$ and $\eta \in C^{\infty}(U)$. Using strong convergence coupled with weak convergence we get
\begin{align*}
E[ 1_A \int_U \phi_t(x) \eta'(x) dx ]   = \lim_{k \rightarrow \infty} E [ 1_A \int_U \phi_{n_k}(t,x) \eta'(x) dx ]  &  =- \lim_{k \rightarrow \infty} E [ 1_A \int_U \partial_x \phi_{n_k}(t,x) \eta(x) dx ] \\
& =- E [ 1_A \int_U \partial_x f_t(x) \eta(x) dx ].
\end{align*}
Consequently we have $\int_U \phi_t(x) \eta'(x) dx = - \int_U \partial_x f_t(x) \eta(x) dx$ on some $\Omega_{\eta} \in \mathcal{F}$ such that $P(\Omega_{\eta}) =1$. Let now $\Omega^*$ be the intersection of a countable, dense in $W^{1,2}(U)$, set of $\eta$ such that the above integration by parts formula holds. It is clear that $P(\Omega^*)=1$ and that $\phi_t$ is weakly differentiable on this set. The result follows.
\end{proof}

\begin{remark} \label{inverseConvergence}
For fixed $t_0 > 0$, consider the equation
$$
\psi_t^{t_0}(y) = y - \int_0^t b(t_0 - r, \psi_r^{t_0}(y)) dr  - (B_{t_0} - B_{t_0 -t}) .
$$
Since the fractional Brownian motion has stationary increments the above equation is on the same form as (\ref{SDE}) and we may apply the same machinery to obtain a sequence $\psi_t^{t_0, n}$ of corresponding smooth flows that converges in the weak topology of $L^2(\Omega; W^{1,p}(U))$ and $\psi_t^{t_0, n}(x)$ converges in the strong topology of $L^2(\Omega)$ to the solution of the above equation.

We have $\psi_{t_0}^{t_0} = \phi_{t_0}^{-1}$, so that $\phi_{t_0}$ is invertible with a Sobolev-differentiable inverse.

Let now $f \in C^1_b(\R)$. For every $n \in \mathbb{N}$ we have
$$
\partial_x f( \psi_t^{t,n}(x)) = f'( \psi_t^{t,n}(x)) \partial_x  \psi_t^{t,n}(x)
$$
which is bounded in any $L^p(U)$ for $p > 1$, $U$ open and bounded. Consequently, there is a weakly converging subsequence which by uniqueness must converge weakly in $L^p(U)$ and we have
$$
\partial_x f( \psi_t^{t}(x)) = f'( \psi_t^{t}(x)) \partial_x  \psi_t^{t}(x) \vspace{.5cm} \textrm{ for almost all } x \in \R.
$$
since $\psi_t^{t,n}(x) \rightarrow \psi_t^t(x)$ strongly in $L^2(\Omega)$.
When $b$ is time-homogenuous we have the following representation
$$
\phi_t^{-1}(y) = y - \int_0^t b( \phi_r^{-1}(y))dr  - B_t . 
$$

\end{remark}

\subsection{Local time of the flow} \label{localTimeOfTheFlow}
We now return to the general case of $d \geq 1$.

In this section we develop a local time theory for the solutions $\phi_t(x)$ of (\ref{SDE}). Assuming we have a solution to $\phi_t(x)$, the results here will rely only on Girsanov's theorem \ref{girsanov} meaning we only use boundedness of $b$. Let now $Q: [0,T] \times \R^d \rightarrow \R$ be given and define $q = D^{\alpha}Q$ for some multiindex, $\alpha = (\alpha^{(1)}, \dots \alpha^{(d)})$. The main objective of this section is to prove that there exists a random field $\Lambda_{\alpha}^{\phi(x),Q}$ on $[0,T] \times \R^d$ such that
$$
\int_0^t q(s,\phi_s(x)) ds = (-1)^{|\alpha|} \int_{\R^d} \Lambda_{\alpha}^{\phi(x),Q}(t,y) dy,
$$
and that the right hand side above can be bounded in terms of $Q$.
Motivated by the previous subsection, we define
$$
\Lambda_{\alpha}^{\phi(x),Q}(t,y) = (2\pi)^{-d} \int_{\R^d} \int_0^t (iu)^{\alpha} Q(s,y) \exp\{ -i u \cdot (\phi_s(x) - y) \} ds du.
  $$
  We denote by $\Lambda_{\alpha}^{Q}(t,y)$ the random field obtained by choosing $B$ instead of $\phi(x)$ in the above definition. Note that from Girsanov's theorem we have
  $$
E[ f(\Lambda_{\alpha}^{\phi(x),Q}(t,y)) ] = E[ f(\Lambda_{\alpha}^{Q}(t,y))  \xi_T]
    $$
    for any $f$ such that the above expressions exists and $\xi_T$ was defined in Theorem \ref{girsanov} and we have $E[ |\xi_T|^2 ] \leq C(\|b\|_{\infty})$
    
  We get a similar result as Lemma \ref{momentEstimate}.

\begin{lemma}
  Assume $Q$ is bounded and $H < \frac{1}{d + 2 |\alpha| }$. We have the following moments estimates on $\Lambda_{\alpha}^{Q}$

\begin{equation}
E[ | \Lambda_{\alpha}^Q (t,y)|^m ] \leq \frac{C^m m! \prod_{k=1}^d  \sqrt{ (m \alpha^{(k)})!} }{ \Gamma( m ( 1 - H(d + 2 |\alpha|)) + 1 }
\end{equation}
where $C = C(\alpha,H)$ does not depend on $m$ or $Q$.

\end{lemma}

\begin{proof}
The proof follows the same lines as in the proof of Lemma \ref{momentEstimate}.
  Begin by writing
\begin{align*}
 E \left[ | \Lambda_{\alpha}^Q (t,y)|^{m} \right] & = \frac{m!}{(2 \pi)^{dm}} \int_{(\R^d)^m} \int_{\Delta^{(m)}(0,t)} \prod_{j=1}^m (iu_j)^{\alpha} Q(s_j,y) \exp\{ - \frac12 Var( u_j \cdot (B_{s_j} - y) \}  ds du\\
& \leq \frac{m!}{(2 \pi)^{dm}} \int_{(\R^d)^m} \int_{\Delta^{(m)}(0,t)} \prod_{j=1}^m  \prod_{k=1}^d |u_j^{(k)}|^{\alpha^{(k)}} |Q(s_j,y)| \exp\{ - \sum_{k=1}^d \frac12 Var(  \sum_{j=1}^m  u_j^{(k)}  B_{s_j}^{(1)}   ) \}  ds du\\ 
& \leq \frac{m! \|Q(\cdot, y)\|_{\infty}^m }{(2 \pi)^{dm}} \int_{\Delta^{(m)}(0,t)} \prod_{k=1}^d  \int_{\R^m}  \prod_{j=1}^m |u_j^{(k)}|^{\alpha^{(k)}}  \exp\{ - \frac12 Var(  \sum_{j=1}^m  u_j^{(k)}  B_{s_j}^{(1)}   ) \}  ds du_1^{(k)} \dots du_m^{(k)} \\ 
\end{align*}
where we have used the independence of the components of $B$ in the second line. Using \eqref{fBmBound}, the above is bounded by
\begin{align*}
C^m m! \|Q(\cdot, y)\|_{\infty}^m  & \prod_{k=1}^d  \sqrt{ (m \alpha^{(k)})!}  \int_{\Delta^{(m)}(0,t)} \prod_{k=1}^d   |s_j - s_{j-1}|^{ - H ( 1 + 2 \alpha^{(k)})}  ds  \\ 
 & \leq \frac{C^m m! \prod_{k=1}^d  \sqrt{ (m \alpha^{(k)})!} \|Q(\cdot, y)\|_{\infty}^m }{ \Gamma( m ( 1 - H(d + 2 |\alpha|)) + 1 } 
\end{align*}
provided $ H < \frac{1}{d + 2 |\alpha|}$.

\end{proof}

Using Theorem \ref{girsanov} we get

\begin{corollary} \label{flowLocalTime}
  Let $Q$, $H$ and $\alpha$ be as in the previous lemma. There exists a constant $C = C(\|b\|_{\infty},H,d,\alpha)$ such that
  $$
  E[ | \Lambda_{\alpha}^{\phi(x),Q} (t,y)|^m ] \leq \frac{C^m \sqrt{ (2m)! \prod_{k=1}^d  \sqrt{ (2m \alpha^{(k)})!}}  \|Q(\cdot, y)\|_{\infty}^m }{ \sqrt{ \Gamma( 2m ( 1 - H(d + 2 |\alpha|)) + 1 } }
  $$

\end{corollary}

If we assume integrability of $Q$ in the spatial variable we see that we can define the stochastic process $\int_{\R^d} \Lambda_{\alpha}^{\phi(x),Q}(t,z) dz$.

\begin{lemma} \label{exponentialBoundsLambda}
If we assume $Q \in L^1(\R^d; L^{\infty}([0,T]))$, $|\alpha| \leq 1$ and $H< \frac{1}{d + 2}$ we have 
$$
E[ \exp\{ \int_{\R^d} | \Lambda_{\alpha}^{\phi(x),Q}(t,z)| dz \} ] \leq C( \|Q\|_{\infty,1} \wedge   \|b\|_{\infty} )
$$
where $C$ is an increasing function.

\end{lemma}

\begin{proof}
Begin by writing
\begin{align*}
E\left[  \left(\int_{\R^d} | \Lambda_{\alpha}^{\phi(x),Q}(t,z)| dz \right)^m  \right] & = \int_{(\R^d)^m} E\left[ \prod_{j=1}^m | \Lambda_{\alpha}^{\phi(x),Q}(t,z_j)| \right] dz_1 \dots dz_m \\
&  \leq \int_{(\R^d)^m}  \prod_{j=1}^m E\left[| \Lambda_{\alpha}^{\phi(x),Q}(t,z_j)|^m \right]^{1/m} dz_1 \dots dz_m \\
& \leq \frac{C^m \sqrt{ (2m)! \prod_{k=1}^d  \sqrt{ (2m \alpha^{(k)})!}} }{ \sqrt{ \Gamma( 2m ( 1 - H(d + 2 |\alpha|)) + 1 } } \int_{(\R^d)^m} \prod_{j=1}^m \| Q(\cdot, z_j)\|_{\infty}  dz_1 \dots dz_m \\
& \leq \frac{C^m \sqrt{ (2m)!  \sqrt{ (2m)!}} }{ \sqrt{ \Gamma( 2m ( 1 - H(d + 2 )) + 1 } }\|Q\|^m_{\infty,1}
\end{align*}
where $C$ is as in Corollary \ref{flowLocalTime}. We get
\begin{align*}
E [ \exp\{ \int_{\R^d} | \Lambda_{\alpha}^{\phi(x),Q}(t,z)| dz \} ] &  = \sum_{m \geq 0}  (m!)^{-1} E\left[  \left(\int_{\R^d} | \Lambda_{\alpha}^{\phi(x),Q}(t,z)| dz \right)^m  \right] \\
& \sum_{m \geq 0 }\frac{ C^m \sqrt{ (2m)! \sqrt{(2m)!} }}{ \sqrt{\Gamma (2m (1 - H(d+2)) + 1)} m!} \|Q\|^m_{\infty,1}\\
\end{align*}
which converges as long as $H< \frac{1}{d + 2}$ by Stirling's formula.
\end{proof}

We now proceed to prove stability of the vector field $\Lambda_{\alpha}^{\phi(x),Q}$ in both $Q$ and $\phi$ in the following way.

\begin{remark}
We shall need stability of the mapping $(\phi(x), Q) \mapsto \int_{\R^d} \Lambda^{\phi(x), Q}(t,z) dz$, but we only need continuity in each variable separately. If $\phi^{\epsilon}_{\cdot}(x)$ converges to $\phi_{\cdot}(x)$ in, say, Lebesgue measure over $[0,T]$ and $Q$ is smooth, we immediately get
\begin{align*}
\lim_{\epsilon \rightarrow 0} \int_{\R^d} \Lambda^{\phi^{\epsilon}(x), Q}(t,z) dz & = \lim_{\epsilon \rightarrow 0} (-1)^{ |\alpha|} \int_0^t q(s,\phi_s^{\epsilon}(x)) ds 
 = (-1)^{ |\alpha|} \int_0^t q(s,\phi_s(x)) ds 
 = \int_{\R^d} \Lambda^{\phi(x), Q}(t,z) dz 
\end{align*}

by dominated convergence.

Stability in $Q$ as a mapping $L^1(\R^d; L^{\infty}([0,T]; \R^d)) \rightarrow L^m(\Omega)$ follows from the linearity of the mapping $Q \rightarrow \int_{\R^d} \Lambda^{\phi(x), Q}(t,z) dz$ as well as the bounds from Lemma \ref{exponentialBoundsLambda}.

\end{remark}

\subsection{Convergence in H\"{o}lder spaces}

With the notation of Proposition \ref{summaryProposition} we shall need a result to ensure convergence of $\nu( f(\phi^n_{\cdot}))$ is uniform on a set of full measure.

\begin{proposition} \label{HolderConvergence}
Let $\gamma \in (0, H)$, $f \in C^{1}_b(\R^d;\R^d)$ and $\nu$ be a finite signed measure on $\R^d$. Then there exists 
a set $\Omega_{\gamma, \nu}$ of full measure such that 
$$
\lim_{n \rightarrow \infty} \nu(f (\phi^n_{\cdot}( \omega)) = \nu( f (\phi_{\cdot}(\omega)))
$$ 
in $C^{\gamma}([0,T];\R^d)$ for all $\omega \in \Omega_{\gamma, \nu}$.

\end{proposition}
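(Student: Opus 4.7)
The plan is to first prove a.s.\ uniform convergence on $[0,T]$ and then upgrade to $C^\gamma$ by interpolation, exploiting a crucial cancellation in the differences $\phi_n(\cdot,x) - \phi_\cdot(x)$. Since $\phi_n$ and $\phi$ are driven by the same fBm path,
\[
\phi_n(t,x,\omega) - \phi_t(x,\omega) = \int_0^t \bigl[\mathcal{B}_n(r, \phi_n(r,x,\omega)) - b(r, \phi_r(x,\omega))\bigr]\, dr,
\]
and as $|\mathcal{B}_n|, |b| \leq \|b\|_\infty$ pointwise, the map $t \mapsto \phi_n(t,x,\omega) - \phi_t(x,\omega)$ is Lipschitz in $t$ with constant at most $2\|b\|_\infty$, uniformly in $n$ and $\omega$. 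This is the key regularity gain: although neither $\phi_n$ nor $\phi$ is better than $H$-H\"older, their difference is Lipschitz.

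The second step is pointwise a.s.\ convergence at each fixed $t$. Since $\mathcal{B}_n = \bigwedge_{j \geq n} b_j$ is non-decreasing in $n$, the SDE comparison principle underlying the construction in Section \ref{singularSDE} (see \cite{NualartOuknine}) gives that $n \mapsto \phi_n(t,x,\omega)$ is $P$-a.s.\ non-decreasing; it therefore converges a.s., and since it also converges to $\phi_t(x)$ in $L^2(\Omega)$, the a.s.\ and $L^2$-limits agree. Taking a countable dense $D \subset [0,T]$ and intersecting the corresponding full-measure sets produces $\Omega_\gamma \subset \Omega$ of full measure on which $\phi_n(t,x,\omega) \to \phi_t(x,\omega)$ simultaneously for every $t \in D$.

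For $\omega \in \Omega_\gamma$, the uniform Lipschitz equicontinuity of the differences combined with pointwise convergence on the dense set $D$ immediately yields $\|\phi_n(\cdot,x,\omega) - \phi_\cdot(x,\omega)\|_\infty \to 0$. The standard interpolation inequality
\[
\|f\|_{\gamma} \lesssim \|f\|_\infty^{1-\gamma}\, \mathrm{Lip}(f)^{\gamma}, \qquad \gamma \in (0,1),
\]
applied to $f = \phi_n(\cdot,x,\omega) - \phi_\cdot(x,\omega)$, then gives the required $C^\gamma$-convergence for any $\gamma \in (0,1)$, in particular for all $\gamma \in (0,H)$. The main technical point is the monotonicity/comparison step; should a direct comparison be unavailable at the needed level of generality, one can alternatively combine the $L^2$-convergence at each $t \in D$ (extracting a.s.\ convergent subsequences by diagonalization) with uniqueness of the limit (forced by the uniform Lipschitz bound on the differences) to promote this to a.s.\ convergence of the full sequence; the final interpolation step is unchanged.
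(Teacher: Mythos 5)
Your primary argument is correct, but it takes a genuinely different route from the paper's. The paper works with the processes $\phi_n$ themselves: it fixes $\omega$ with $B(\omega)\in C^{H-\epsilon}$, so that the $\phi_n(\cdot,x,\omega)$ are equibounded in $C^{H-\epsilon}$, extracts by a diagonal procedure a subsequence converging a.s.\ at all rationals, identifies the limit via Arzel\`a--Ascoli, and then interpolates between $C^0$ and $C^{H-\epsilon}$ --- which is precisely why it only reaches $\gamma<H$. You instead exploit the cancellation of the noise in $\phi_n-\phi$, so the differences are $2\|b\|_{\infty}$-Lipschitz in $t$ uniformly in $n$ and $\omega$, and you get pointwise a.s.\ convergence of the \emph{full} sequence at each fixed $t$ from monotonicity: since $\mathcal{B}_n=\bigwedge_{j\ge n}b_j$ is non-decreasing in $n$, comparison for the Lipschitz approximations $\tilde b_{n,k}$ passes to the limit, which is exactly the mechanism already used in Section \ref{singularSDE} and in \cite{NualartOuknine}, and the a.s.\ limit is identified with $\phi_t(x)$ through the $L^2$-convergence. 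A finite-net argument over a countable dense set then upgrades this to a.s.\ uniform convergence, and interpolation against the uniform Lipschitz constant gives convergence in $C^{\gamma}$ for \emph{every} $\gamma\in(0,1)$, a stronger conclusion than the statement, obtained without Arzel\`a--Ascoli and without invoking the H\"older regularity of $B$ at all. This is a clean and arguably more robust argument; in particular the full-sequence (rather than subsequence) a.s.\ statement comes for free from monotonicity.

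One caveat: your fallback, offered in case comparison were unavailable, does not work as stated. Producing, from every subsequence, a further subsequence converging a.s.\ uniformly only yields convergence in probability of $\sup_{t\le T}|\phi_n(t,x)-\phi_t(x)|$, because the full-measure set obtained by diagonalization depends on the chosen subsequence and uncountably many such sets cannot be intersected; ``uniqueness of the limit'' identifies limits only along those subsequences and does not repair this. So the monotonicity (comparison) step is genuinely load-bearing in your proof; keep it, or else settle for a fixed subsequence as the statement of the proposition is effectively used downstream.
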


\begin{proof}
We begin by showing that $\nu( f (\phi_t^n)) \rightarrow \nu( f (\phi_t))$ in $L^2(\Omega)$ for every $t$. To see this, consider
\begin{align*}
E[ \left| \nu(f( \phi_t^n)) - \nu( f( \phi_t)) \right|^2 ] & = E[ \left| \nu(f(\phi_t^n) - f(\phi_t)) \right|^2 ] \\
& \leq |\nu|(\R^d) \|\nabla  f\|_{\infty} \int_{\R^d} E[ |\phi_t^n(x) - \phi_t(x)|^2 ] d\nu(x)  \rightarrow 0
\end{align*}
as $n \rightarrow \infty$ by dominated convergence, which proves the first claim.
  
Next we find a set universal in $t$ for which we have pointwise in $\omega$ convergence. Denote by $\{ q_j\}_{j=1}^{\infty}$ an enumeration of $[0,T] \cap \mathbb{Q}$. We may extract a subsequence $\{\nu( f( \phi^{n(k,1)}_{q_1}))\}_{k \geq 1} \subset \{ \nu(f(\phi^{n}_{q_1}))\}_{n \geq 1}$ such that 
$$
\lim_{k \rightarrow \infty} \nu(f(\phi^{n(k,1)}_{q_1}(\omega)) )= \nu(f(\phi_{q_1}(\omega)))
$$
for $\omega \in \Omega_1$ with full measure. Furthermore, we define inductively a subsequence $\{ \nu(f(\phi^{n(k,j+1)}_{q_{j+1}}))\}_{k \geq 1} \subset \{\nu(f(\phi^{n(k,j)}_{q_{j+1}}))\}_{k \geq 1}$ such that 
$$
\lim_{k \rightarrow \infty} \nu(f(\phi^{n(k,j+1)}_{q_{j+1}}(\omega))) = \nu(f(\phi_{q_{j+1}}(\omega)))
$$
for $\omega \in \Omega_{j+1}$ with full measure. Let $\Omega_0 = \cap_{j=1}^{\infty} \Omega_j$, so that we have
$$
\lim_{j \rightarrow \infty} \nu(f(\phi^{n(j,j)}_q(\omega))) = \nu(f(\phi_{q}(\omega)))
$$
for all $\omega \in \Omega_0$ and $q$ rational.

Now, we construct a set where $\{ \nu(f(\phi_{\cdot}^n))\}_{n \geq 1}$ is relatively compact in $C([0,T];\R^d)$. Let $\epsilon > 0$ be such that $\gamma < H-\epsilon$ and choose a subset $\Omega_{H-\epsilon}$ with full measure such $\phi_{\cdot}$ satisfies (\ref{flowSDE}) and for every $\omega \in \Omega_{H-\epsilon}$ we have
$$
(t \mapsto B_t(\omega)) \in C^{H-\epsilon}([0,T];\R^d) .
$$
Note that $\nu(f(\phi_{\cdot}))$ is continuous on this set.

From (\ref{flowSDE}) we see  
\begin{align*}
  | \nu(f(\phi^n_t(\omega))) - \nu(f(\phi^n_s(\omega)))| &  \leq | \nu \left(f( \phi^n_t(\omega) )- f(\phi^n_s(\omega)) \right)| \\
  & \leq |\nu|(\R^d) \|\nabla f\|_{\infty} \left( \|b_n\|_{\infty}   |t-s| + |B_{st}(\omega)| \right) \\
& \leq |\nu|(\R^d)  \|\nabla f\|_{\infty} \left( \|b_n\|_{\infty} |t-s| + \|B(\omega)\|_{H-\epsilon} |t-s|^{H-\epsilon} \right)
\end{align*}
so that the uniform boundedness of $b_n$ implies that $\{ \nu(f(\phi^n_{\cdot}(\omega)))\}_{n \geq 1}$ is equicontinuous. Moreover, the sequence is bounded in $C([0,T];\R^d)$ and from Arzela-Ascoli's theorem there exists a converging subsequence
$\{\nu(f(\phi^{j(k,\omega)}_{\cdot}(\omega)))\}_{k \geq 1} \subset \{\nu(f(\phi^{n(j,j)}_{\cdot}(\omega)))\}_{j \geq 1}$. For $\omega \in \Omega_0 \cap \Omega_{H - \epsilon}$ - which has full measure - we see that the limit coincides with $\nu(f(\phi_{\cdot}(\omega)))$. Applying the above reasoning to any subsequence of $\{ \nu(f(\phi^n_{\cdot}(\omega )))\}_{n \geq 1}$ we get a further subsequence that converges to $ \nu(f(\phi_{\cdot}( \omega)))$ in $C([0,T];\R^d)$. Since $C([0,T];\R^d)$ is a Banach space this implies that the \emph{full} sequence converges. By interpolation of H\"{o}lder spaces we see that the claim is true if we let $\Omega_{\gamma, \nu} := \Omega_0 \cap \Omega_{H - \epsilon}$.
\end{proof}

\section{Continuity Equation}


In this section we want to study the rough linear continuity equation
\begin{equation} \label{roughCE}
\partial_t \mu_t + \textrm{div}( b \mu_t)  + \textrm{div}( \mu_t d\X_t) = 0 
\end{equation}
with given initial condition $\mu_0$.

\begin{definition}
Let $\mu_0$ be a finite signed measure on $\R^d$. A measure valued function $\mu : [0,T] \rightarrow \mathcal{M}(\R^d)$ is called a measure solution to (\ref{roughCE}) if 
$$
\mu_t + \int_0^t \textrm{div}(b(r, \cdot) \mu_r ) dr + \int_0^t \textrm{div}( \mu_r d\X_r) = \mu_0
$$
holds weakly in $\mathcal{M}(\R^d)$ meaning  for every $\eta \in C^{\infty}_c(\R^d)$ we have $\mu_{\cdot}( \nabla \eta ) \in \mathscr{D}_{\X}^{p \gamma}$ and
$$
\mu_t(\eta)  = \mu_0(\eta) + \int_0^t \mu_r(b(r,\cdot) \nabla\eta ) dr + \int_0^t \mu_r( \nabla \eta)  d\X_r .
$$
\end{definition}

\emph{If} we know that there exists a solution to
$$
\phi_t(x) = x + \int_0^t b(r,\phi_r(x))dr + X_t, 
$$
then for any test function $\eta \in C^{\infty}_c(\R^d)$ we have from Lemma \ref{ItoFormula}
$$
\eta(\phi_t(x)) = \eta(x) + \int_0^t \nabla\eta(\phi_r(x)) b(r,\phi_r(x)) dr + \int_0^t \nabla \eta(\phi_r(x)) d\X_r.
$$

We integrate the equation w.r.t. $\mu_0$ to see that $\mu_t := (\phi_t)_{\sharp} \mu_0$ solves (\ref{roughCE}) if we can use integration by parts for the rough path integral, namely
$$
\int_{\R^d} \int_0^t \nabla \eta(\phi_r(x)) d\X_r d\mu_0(x) = \int_0^t \mu_0(\nabla\eta(\phi_r)) d\X_r.
$$
Suppose now that $b \in C^1_b([0,T] \times \R^d;\R^d)$ and $\X \in \mathscr{C}^{\gamma}_g$. Let $X^{\epsilon} \in C^1([0,T] ;\R^d)$ be such that $ \X^{\epsilon} \rightarrow \X $ in $\mathscr{C}^{\gamma}$. Using Section \ref{ElementsOfControlledRoughPaths} we get
\begin{align*}
  \int_0^t \mu_0(\nabla\eta(\phi_r)) d\X_r   = \lim_{\epsilon \rightarrow 0} \int_0^t \mu_0(\nabla\eta(\phi^{\epsilon}_r)) \dot{X}^{\epsilon}_r dr  & = \lim_{\epsilon \rightarrow 0} \int_{\R^d} \int_0^t \nabla\eta(\phi^{\epsilon}_r(x)) \dot{X}^{\epsilon}_r dr  d\mu_0(x) \\ 
   & =  \int_{\R^d} \int_0^t \nabla\eta(\phi_r(x)) d\X_r  d\mu_0(x) . 
  \end{align*}


We summarize the above in a lemma.

\begin{lemma}
Suppose $b \in  C^1_b([0,T] \times \R^d;\R^d)$ and $\X \in \mathscr{C}^{\gamma}_g$. Then there exists a solution to (\ref{roughCE}) and the solution is given by $\mu_t := (\phi_t)_{\sharp} \mu_0$.
\end{lemma}

Given the previous sections the reader will not be surprised that we can extend this to when the drift is discontinuous provided we choose the rough path to be the lift of a fractional Brownian motion with low Hurst index.

\begin{lemma}
  Assume $ H <  \frac{1}{2(2d + 1)}$,  $b \in L^{\infty}([0,T] \times \R^d; \R^d) \cap L^1(\R^d; L^{\infty}([0,T]; \R^d)) $ and $\mu_0$ a finite signed measure on $\R^d$. There exists a subset $\Omega^* \subset \Omega$ with full measure such that for every $\omega \in \Omega^*$ we have
  \begin{itemize}
  \item
    The fractional Brownian motion lifts to a geometric rough path $\B(\omega) \in \mathscr{C}^{\gamma}_g$, $\gamma < H$.
  \item
    There exists a solution $\mu_{\cdot}(\omega)$ to
    $$
\mu_t(\omega) + \int_0^t \textrm{div}(b(r, \cdot) \mu_r(\omega) ) dr + \int_0^t \textrm{div}( \mu_r(\omega) d\B_r(\omega)) = \mu_0 . 
    $$
\end{itemize}

\end{lemma}

\begin{proof}
Denote by $\Omega_{\B}$ the set of $\omega \in \Omega$ such that $B(\omega)$ lifts to a rough path, $\B(\omega) \in \mathscr{C}^{\gamma}_g$.

Let $\eta \in C^{\infty}_c(\R^d)$.
Consider the approximation from Section \ref{fBmSDEChapter}, i.e. we have $\Omega_{\gamma,\eta,\mu_0}$ such that $\lim_{n \rightarrow \infty} \mu_0( \nabla \eta(\phi_{\cdot}^n(\omega))) = \mu_0( \nabla \eta(\phi_{\cdot}(\omega)))$ in $C^{\gamma}([0,T];\R^d)$. From Propositions \ref{summaryProposition} and \ref{HolderConvergence} we get that
$$
\int_0^{\cdot} \mu_0( \nabla  \eta(\phi_r^n(\omega))) d\B_r(\omega) \rightarrow \int_0^{\cdot} \mu_0( \nabla  \eta(\phi_r(\omega))) d\B_r(\omega)
$$
on $\Omega_{\gamma,\eta, \mu_0} \cap \Omega_{\B}$.

 For every $n$ we have that $\mu_t^n := (\phi_t^n)_{\sharp} \mu_0$ satisfies
 $$
\mu^n_t(\eta) = \mu_0(\eta) + \int_0^t \mu^n_r( b_n(r, \cdot) \nabla \eta) dr + \int_0^t \mu_r^n(\nabla\eta) d\B_r
$$
on $\Omega_{\B}$. Denote by $\Omega_{\eta, \mu_0}$ the set of $\omega \in \Omega$ such that $\mu_t^n(\eta) \rightarrow \mu_t(\eta)$, so that we must have that all the above terms converges on $\Omega_{\eta, \mu_0} \cap \Omega_{\gamma,\eta, \mu_0} \cap \Omega_{\B}$, to 
 $$
\mu_t(\eta) = \mu_0(\eta) + \int_0^t \mu_r( b(r, \cdot) \nabla \eta) dr + \int_0^t \mu_r(\nabla\eta) d\B_r.
$$
Let now
$$
\Omega^* := \Omega_{\B} \cap \bigcap_{k \geq 1} \Omega_{\eta^k, \mu_0} \cap \Omega_{\gamma,\eta^k, \mu_0}
$$
where $\{ \eta^k \}_{k \geq 1} \subset C_c^{\infty}(\R^d)$ is dense in $C^{\infty}_c(\R^d)$ equipped with the ususal test function topology. Then $\Omega^*$ is the desired set.
\end{proof}

\section{Transport Equation}


In this section we want to study (\ref{TEWeakIntegralForm}). Morally, the solution to this equation should be given by
$$
u(t,x) = u_0(\phi_t^{-1}(x)) \exp \{ - \int_0^t c(s, \phi_r(y))dr |_{y = \phi_t^{-1}(x)} \} .
$$
When $c$ is a distribution this expression does not make sense. Using chapter \ref{flowLocalTime} we can however \emph{define} the solution to be
$$
u(t,x) = u_0(\phi_t^{-1}(x)) \exp \{ (-1)^{|\alpha| + 1} \int_{\R^d} \Lambda^{\phi(y), C}_{\alpha}(t,z)dz |_{y = \phi_t^{-1}(x)} \} .
$$
where $C = D^{\alpha} c$ and $C$ is a function. 

Another question is in what way does this function defined above satisfy (\ref{TEWeakIntegralForm}). To answer this we should look for a spatially weak formulation of the equation, namely for every $\eta \in C^{\infty}_c(\R^d)$ the function should satisfy
$$
\langle u(t), \eta \rangle + \int_0^t \langle b(r) \nabla u(r), \eta \rangle  + \int_0^t \langle u(r) c(r), \eta \rangle  + \int_0^t \langle \nabla u(r),  \eta \rangle dB_r= \langle u_0, \eta \rangle   .
$$


In order to make sense of the stochastic integral term we need to guarantee that $\langle \nabla u(r),  \eta \rangle$ is a path controlled by $B$ as described in Section \ref{ElementsOfControlledRoughPaths}. Using integration by parts we get
\begin{align*}
\langle  \nabla u(r),  \eta \rangle &  = - \int_{\R^d} u_0(\phi_r^{-1}(x)) \exp \{ (-1)^{|\alpha| + 1} \int_{\R^d} \Lambda^{\phi(y), C}_{\alpha}(r,z)dz |_{y = \phi_r^{-1}(x)} \} \nabla \eta(x) dx \\
& =  - \int_{\R^d} u_0(y) \exp \{ (-1)^{|\alpha| + 1} \int_{\R^d} \Lambda^{\phi(y), C}_{\alpha}(r,z)dz  \} \nabla \eta( \phi_r(y) ) |\nabla \phi_r(y)| dy
\end{align*}
where we have used the change of variables $\phi_r(y) = x$. It is clear from Section \ref{controllingODEs} that $\nabla \eta(\phi_{\cdot}(y))$ can be regarded as a controlled path. However, the terms 
$$
\int_{\R^d} \Lambda^{\phi(y), C}_{\alpha}(\cdot,z)dz \hspace{.5cm} \textrm{ and } \hspace{.5cm}  |\nabla \phi_{\cdot}(y)| = \exp\{ - \sum_{j=1}^d \int_{\R^d} \Lambda^{\phi(y), b_j}_{e_j}(\cdot,z)dz \}
$$ 
are not expected to be more than $1 - H(2+d)$ regular in time (at least at the current level of knowledge) so we can not invoke Lemma \ref{scalarMultiplication} and it is not clear how to define the product as a controlled path. In fact this seems to require that also e.g. $|\nabla \phi_{\cdot}(y)|$ is controlled by $B$ and we do not yet know how do this construction.

In its full generality we still cannot show that $u$ defined as above solves the equation, but we provide some examples ($d=1$, $ div(b)$ bounded, $c = div(b)$ and time-homogenuous drift) where we can.

First, let us study the equation when the coefficients and the noise are regular.

\subsection{Regular Case}

Assume for a moment that the drift $b \in L^{\infty} ([0,T] ; C^1_b(\R^d))$  and we want to study the rough linear transport equation 
\begin{equation} \label{roughTE}
\partial_t u + b \nabla u  + c u +  \nabla u d \textbf{X}_t = 0 
\end{equation}
with given initial condition $u|_{t=0} = u_0$.

If we assume that $\X$ is the geometric lift of a smooth path $X \in C^1$, we may read (\ref{classicalTE}) in a classical way:
\begin{equation} \label{classicalTE}
\partial_t u(t,x) + b(t,x) \cdot \nabla u(t,x)  +c(t,x) u(t,x) + \nabla u(t,x)  \cdot \dot{X}_t = 0 
\end{equation}
with initial condition $u(0,x) = u_0(x)$. 
To solve this equation, let us define 
$$
u(t,x) := u_0(\phi_t^{-1}(x))\exp\{ -\int_0^t c(r,\phi_r(y))dr |_{y = \phi_t^{-1}(x)} \}
$$ 
where $\phi_t(x)$ is the solution to (\ref{roughODE}). Immediately,  $u(t,\phi_t(x)) = u_0(x)\exp\{ -\int_0^t c(r,\phi_r(x))dr \}$ and so
\begin{align*}
-c(t,\phi_t(x)) u_0(x) &\exp\{  -\int_0^t c(r,\phi_r(x))dr \}   = \frac{d}{dt} u(t,\phi_t(x)) \\
 & = \partial_t u(t,\phi_t(x)) + \nabla u(t,\phi_t(x))  \cdot \dot{\phi}_t(x) \\
 & = \partial_t u(t,\phi_t(x)) + \nabla u(t,\phi_t(x))  \cdot b(t,\phi_t(x)) + \nabla u(t,\phi_t(x)) \cdot \dot{X}_t  .
\end{align*}
Making a change of variables we see that $u(t,x) $ is indeed a solution of (\ref{classicalTE}).

Integrating the above w.r.t. $t$ and approximating a rough path $\X$ by smooth paths and taking the limit, it is reasonable that we should get
\begin{equation*} 
u(t,x) + \int_0^t b(r,x)  \cdot \nabla u(r,x) dr  + \int_0^t c(r,x) u(r,x)dr +  \int_0^t \nabla u(r,x)  d\X_r = u_0(x) 
\end{equation*}
provided the solution is such that $\nabla u(\cdot,x)$ is controlled by $\X$. Unfortunately, to guarantee that $\nabla u(t,x)$ is a controlled path we need higher order differentiability of the solution than the regularization of the fractional noise can provide. To circumvent this we use a spatially weak notion of solution.

\begin{definition} \label{Def:WeakControlledSolution}
Let 
$u_0, c: [0,T] \times \R^d \rightarrow \R$ and $b: [0,T] \times \R^d \rightarrow \R^d$ be given locally integrable functions.
Let $u : [0,T] \times \R^d \rightarrow \R$ be such that for all $t \in [0,T]$ we have $u(t, \cdot) \in W^{1,2}(U)$ for all open and bounded $U \subset \R^d$. We call $u$ a weak controlled solution to (\ref{roughTE}) if for all $\eta \in C^{\infty}_c(\R^d)$ the path $\int_{\R^d}\nabla u(\cdot, x) \eta(x) dx$ is controlled by $\X$ and the following equality holds
\begin{align} \label{integralTE}
\int_{\R^d} u(t,x) \eta(x) dx & + \int_0^t \int_{\R^d} \nabla u(r,x)  \cdot b(r,x) \eta(x) dx dr +  \int_0^t \int_{\R^d}  u(r,x) c(r,x) \eta(x) dx dr \\
& + \int_0^t \int_{\R^d} \nabla u(s, x) \eta(x) dx d \X_r  = \int_{\R^d} u_0(x) \eta(x)dx \notag .
\end{align}

\end{definition}

Existence of such a solution when the drift is nice is relatively straightforward. The proof is a consequence of the discussion in Section \ref{stabilityODEs} together with the above computations.

\begin{lemma}
Assume $b,c \in L^{\infty} ([0,T] ; C^1_b(\R^d))$,  and $\X \in \mathscr{C}_g^{\gamma}$. Then there exists a weak solution to (\ref{roughTE}).

\end{lemma}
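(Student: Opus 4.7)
The plan is to solve the equation by characteristics and then pass to the limit from smooth approximations of $X$. Define $u(t,x) := u_0(\phi_t^{-1}(x))$, where $\phi_t$ is the flow of (\ref{roughODE}) driven by $X$. Since $b \in L^\infty([0,T]; C^1_b(\R))$, classical ODE theory gives a $C^1$-diffeomorphism $\phi_t$, so $u(t,\cdot) \in C^1(\R) \subset W^{1,2}(U)$ for every bounded open $U$. What remains is to verify (\ref{integralTE}) and the controlled path property of $r \mapsto \int_\R u(r,x)\eta'(x)\,dx$.

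The key algebraic identity I will exploit is the change of variables $x = \phi_r(y)$:
\begin{align*}
\int_\R u(r,x)\, \eta'(x)\, dx & = \int_\R u_0(y)\, \eta'(\phi_r(y))\, \partial_y \phi_r(y)\, dy \\
& = \int_\R u_0(y)\, \tfrac{d}{dy}\bigl[\eta(\phi_r(y))\bigr]\, dy = -\int_\R u_0'(y)\, \eta(\phi_r(y))\, dy.
\end{align*}
Since $u_0' \in L^1(\R)$, the signed measure $d\mu(y) := -u_0'(y)\, dy$ is finite, so Proposition \ref{summaryProposition}(1) ensures that this map is a controlled path in $\mathscr{D}^{p\gamma}_{\X}$ and that the rough integral $\int_0^t \int_\R u(r,x)\, \eta'(x)\, dx\, d\X_r$ is well defined.

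For the identity itself, I would mollify $X$ to obtain smooth paths $X^\epsilon$ whose geometric lifts $\X^\epsilon$ converge to $\X$ in $\varrho_\gamma$, and set $\phi_t^\epsilon$ and $u^\epsilon := u_0 \circ (\phi_t^\epsilon)^{-1}$. The computation carried out just before the statement of the lemma shows that $u^\epsilon$ is a classical solution of (\ref{classicalTE}) driven by $X^\epsilon$; integrating against $\eta$ and performing ordinary integration by parts on the noise term produces
\begin{align*}
\int_\R u^\epsilon(t,x)\, \eta(x)\, dx & + \int_0^t\!\int_\R \partial_x u^\epsilon(r,x)\, b(r,x)\, \eta(x)\, dx\, dr \\
& - \int_0^t\!\int_\R u^\epsilon(r,x)\, \eta'(x)\, dx\, \dot{X}^\epsilon_r\, dr = \int_\R u_0(x)\, \eta(x)\, dx.
\end{align*}
Sending $\epsilon \to 0$: the boundary term converges via uniform convergence $(\phi_t^\epsilon)^{-1} \to \phi_t^{-1}$ coming from (\ref{flowStability}) and (\ref{inverseStability}); the drift term, rewritten as $-\int u^\epsilon\, (b\eta)'\, dx$, converges by dominated convergence using $b \in C^1_b$ and the compact support of $\eta$; and the rough-type term converges by Proposition \ref{summaryProposition}(2) applied to $d\mu = -u_0'(y)\, dy$, exactly in the form provided by the change-of-variables identity above. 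The only genuinely nontrivial step is this last limit, and the change-of-variables trick is engineered precisely so that it falls within the scope of Proposition \ref{summaryProposition}; everything else is routine once that identification is made.
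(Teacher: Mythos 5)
Your proposal is correct and follows essentially the same route as the paper: define $u = u_0\circ\phi_t^{-1}$, use the change of variables $y=\phi_r^{-1}(x)$ to recast the noise term as $\int_{\R} u_0'(y)\eta(\phi_r(y))\,dy$ (a controlled path by Proposition \ref{summaryProposition} with $d\mu = -u_0'(y)\,dy$), and pass to the limit from smooth approximations $X^{\epsilon}$ using the stability results of Sections \ref{controllingODEs}--\ref{stabilityODEs} for the rough term, with (\ref{inverseStability}), dominated convergence and integration by parts on $b\eta$ handling the remaining terms. This matches the paper's argument in both structure and the key identification.
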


\begin{proof}
Consider a smooth approximation $\X^{\epsilon}$ of $\X$ and let 
$$
u^{\epsilon}(t,x) := u_0(\phi_t^{\epsilon,-1}(x))\exp\{ -\int_0^t c(r,\phi^{\epsilon}_r(y))dr|_{y = \phi_t^{\epsilon, -1}(x)} \} 
$$ 
so that $u^{\epsilon}$ satisfies
\begin{align*} 
\int_{\R^d} u^{\epsilon}(t,x) \eta(x) dx & + \int_0^t \int_{\R^d} \nabla u^{\epsilon}(r,x) \cdot b(r,x) \eta(x) dx dr + \int_0^t \int_{\R^d} c(r,x) u^{\epsilon}(r,x) \eta(x) dx dr \\
&  +   \int_0^t \int_{\R^d} \nabla u^{\epsilon}(r,x) \eta(x) dx \dot{X}^{\epsilon}_rdr   = \int_{\R^d} u_0(x) \eta(x)dx \notag .
\end{align*}

Consider now $\int_{\R} \nabla u^{\epsilon}(r,x) \eta(x) dx$ as above. Using integration by parts we get 
\begin{align*}
\int_{\R^d} \nabla u^{\epsilon}(r,x) \eta(x) dx & = - \int_{\R^d} u_0(\phi_r^{\epsilon,-1}(x)) \exp\{ -\int_0^r c(r,\phi^{\epsilon}_s(y))ds|_{y = \phi_r^{\epsilon, -1}(x)} \} \nabla \eta(x) dx \\
& = - \int_{\R^d} u_0(y) \exp\{ -\int_0^r c(s,\phi^{\epsilon}_s(y))ds \} |\nabla \phi^{\epsilon}_r(y)| \eta(\phi_r^{\epsilon}(y)) dy 
\end{align*}
where we have used a change of variable $y = \phi_r^{\epsilon,-1}(x)$ in the last equality. From Liouville's formula we get $|\nabla \phi_r^{\epsilon}(y)| = \exp \{ \int_0^r div(b)(s,\phi^{\epsilon}_s(y))ds \}$

From Section \ref{controllingODEs}, if we can show that $\exp\{ \int_0^rdiv(b)(s,\phi^{\epsilon}_s(y))-  c(s,\phi^{\epsilon}_s(y))ds \}  \eta(\phi_r^{\epsilon}(y))$ converges in $\mathscr{D}_{\X}^{p \gamma}$ to $\exp\{ \int_0^rdiv(b)(s,\phi_s(y))-  c(s,\phi_s(y))ds \}  \eta(\phi_r(y))$, then it follows immediately that 
$$
\int_0^t \int_{\R^d} \nabla u^{\epsilon}(r,x) \eta(x) dx \dot{X}^{\epsilon}_rdr \rightarrow \int_0^t \int_{\R^d} \nabla u(r,x) \eta(x) dx d\X_r .
$$

To this end, we notice that from Lemma \ref{scalarMultiplication} it is enough to prove that $\int_0^{\cdot}div(b)(s,\phi^{\epsilon}_s(y))-  c(s,\phi^{\epsilon}_s(y))ds$ converges in $C^{\beta}$ to $\int_0^{\cdot}div(b)(s,\phi_s(y))-  c(s,\phi_s(y))ds$. From H\"{o}lder's inequality we get
\begin{align*}
|\int_r^t div(b)(s,\phi^{\epsilon}_s(y))-  c(s,\phi^{\epsilon}_s(y)) - div(b)(s,\phi_s(y)) +   c(s,\phi_s(y))ds | \\
 \leq |t-r|^{\beta} \| div(b)(\phi^{\epsilon}(y))-  c(\phi^{\epsilon}(y)) - div(b)(\phi(y)) +   c(\phi(y)) \|_{L^{1/\beta}([0,T])} .
\end{align*}
The result follows by dominated convergence and continuity (of $c$ and $div(b)$) as long as we choose $\beta = p \gamma < 1$.

Convergence of the remaining terms follows by similar considerations.
\end{proof}

\subsection{Singular case}

Motivated by the previous section we define our solution via the flow transformation.

\begin{definition} \label{Def:LocalTimeSolution}
Let $b: [0,T] \times \R^d \rightarrow \R^d$ be a given function and $c : [0,T] \rightarrow \mathcal{D}'(\R^d)$ be a distribution such that there exists functions $C_j : [0,T] \times \R^d \rightarrow \R^d$ for $j=1, \dots, J$ and multiindices $\alpha_1, \dots, \alpha_J$ satisfying $c(t) = \sum_{j=1}^J D^{\alpha_j}C_j(t)$ where $D^{\alpha_j}$ denotes spatial differentiation in the weak sense. We call the function 
$$
u(t,x) = u_0(\phi_t^{-1}(x)) \exp \{ \sum_{j=1}^J (-1)^{|\alpha_j| + 1} \int_{\R^d} \Lambda^{\phi(y), C_j}_{\alpha_j}(t,z)dz |_{y = \phi_t^{-1}(x)} \} .
$$
a local time solution of (\ref{roughTE}) with initial condition $u_0$ provided all the terms exists as in Section \ref{localTimeOfTheFlow}

\end{definition}

We go on to prove existence of such a solution for almost all paths of the fBm.

\begin{theorem}
Assume we have
\begin{itemize}
\item
$b \in L^{\infty}([0,T] \times \R^d; \R^d) \cap L^1(\R^d; L^{\infty}([0,T]; \R^d))$

\item
There exists smooth functions $C_j^{k}$ such that $\int_{\R^d} \sup_{t \in [0,T]} |C_j(t,y) - C_j^k (t,y)| dy \rightarrow 0$ as $k \rightarrow \infty$ for all $j$,

\item
$u_0$ is continuous,

\item
$|\alpha_j | \leq 1$

\item
$B$ is a fBm with Hurst parameter $H < \frac{1}{d + 2}$.

\end{itemize}

Then there exists a set of full measure, $\Omega_0$ such that for every $\omega \in \Omega_0$ there exists a local time solution of (\ref{roughTE}).

\end{theorem}

\begin{proof}
The proof is done by approximation of $b$ and then $c$ as in the above assumptions. For notational simplicity we assume $J=1$. Let $\Omega_{\gamma, \delta_x}$ be as in Proposition \ref{HolderConvergence} where $\delta_x$ is the Dirac centered at $x$, so that we have $u_0(\phi_n(t,x,\omega)^{-1}) \rightarrow u_0(\phi(t,x,\omega)^{-1})$. For a fixed $k$ we have
\begin{align*}
\lim_{n \rightarrow \infty} (-1)^{|\alpha|} \int_{\R^d} \Lambda_{\alpha}^{C^k,\phi_n(x)}(t,y)dy & = \lim_{n \rightarrow \infty} \int_0^t c^k(s,\phi_n(s,x)) ds \\
 & =  \int_0^t c^k(s,\phi_s(x)) ds = (-1)^{|\alpha|} \int_{\R^d} \Lambda_{\alpha}^{C^k,\phi(x)}(t,y) dy
\end{align*}
on a set $\Omega_k$ of full measure. Finally, we notice that 
\begin{align*}
E[ \left( \int_{\R^d} \Lambda_{\alpha}^{C^k,\phi(x)}(t,y) dy - \int_{\R^d} \Lambda_{\alpha}^{C,\phi(x)}(t,y) dy \right)^m ] & = E[ \left( \int_{\R^d} \Lambda_{\alpha}^{C^k- C,\phi(x)}(t,y) dy  \right)^m ] \\
 & \leq C_m \int_{\R^d} \| C^k(\cdot,y) - C(\cdot,y) \|_{\infty} dy \rightarrow 0
\end{align*}
by assumption, and thus there exists a subsequence and a set of full measure, $\tilde{\Omega}$ such that we have $ \lim_{k \rightarrow \infty} \int_{\R^d} \Lambda_{\alpha}^{C^k,\phi(x)}(t,y) dy = \int_{\R^d} \Lambda_{\alpha}^{C,\phi(x)}(t,y) dy$ on $\tilde{\Omega}$. The result follows when we choose $\Omega_0 =\Omega_{\gamma, \delta_x} \cap \tilde{\Omega} \cap  \cap_{k \geq 1} \Omega_k$.
\end{proof}

\begin{example}[The continuity equation revisited]
Let $c = div(b) = \sum_{j=1}^d \frac{ \partial b_j}{\partial x_j}$ where $b$ is as before. 
We get from the previous Theorem that the solution to
\begin{align*}
\int_{\R^d} u(t,x) \eta(x) dx & + \int_0^t \int_{\R^d} \nabla u(r,x) b(r,x) \eta(x) dx dr +  \int_0^t \int_{\R^d}  div(b)(r,x) u(r,x) \eta(x) dx dr \\
& + \int_0^t \int_{\R^d} \nabla u(s, x) \eta(x) dx d \B_r  = \int_{\R^d} u_0(x) \eta(x)dx \notag .
\end{align*}
is given by (actually by definition of the solution)
$$
u(t,x) = u_0(\phi_t^{-1}(x)) \exp \{  \sum_{j=1}^d  \int_{\R^d} \Lambda^{\phi(y), b_j}_{e_j}(t,z)dz |_{y = \phi_t^{-1}(x)} \}.
$$
Rewriting the above equation 
\begin{align*}
\int_{\R^d} u(t,x) \eta(x) dx & + \int_0^t \int_{\R^d} div (u(r,x) b(r,x)) \eta(x) dx dr   \\
& + \int_0^t \int_{\R^d} \nabla u(s, x) \eta(x) dx d \B_r  = \int_{\R^d} u_0(x) \eta(x)dx \notag 
\end{align*}
this should give us the same solution as the continuity equation if $\frac{d \mu_0}{dx} = u_0(x)$, i.e. $u_0$ is the Radon-Nikodym of the measure $\mu_0$ w.r.t. Lebesgue measure.

To see that this is indeed the case we consider again the approximation from Section \ref{fBmSDEChapter}. The solution of the continuity equation $\mu_t^n = (\phi_n(t,\cdot))_{\sharp} \mu_0$ so that for any $\eta \in C^{\infty}_c(\R^d)$ we have
\begin{align*}
\mu_t^n(\eta) & = \int_{\R^d} \eta(\phi_n(t,y)) u_0(y) dy  = \int_{R^d} \eta(x) u_0(\phi_n^{-1}(t,x)) \exp \{ -\int_0^t div(b_n(r,\phi_n(r,y)))dr|_{y= \phi_n^{-1}(t,x)} \} dx
\end{align*}
where we have used the change of variable $y = \phi_n^{-1}(t,x)$. As in the proof of the previous Theorem we can let $n \rightarrow \infty$ and find a set of full measure for which 
\begin{align*}
\mu_t(\eta) & =  \int_{R^d} \eta(x) u_0(\phi_t^{-1}(x)) \exp \{ \sum_{j=1}^d  \int_{\R^d} \Lambda^{\phi(y), b_j}_{e_j}(t,z)dz |_{y = \phi_t^{-1}(x)} \} dx  = \int_{R^d} \eta(\phi_t(y)) u_0(y) dy 
\end{align*}
since $|\nabla \phi_t(y)| = \exp \{ \sum_{j=1}^d  \int_{\R^d} \Lambda^{\phi(y), b_j}_{e_j}(t,z)dz  \}$. The latter expression is obviously a controlled path. We conclude that our definitions \ref{Def:WeakControlledSolution} and \ref{Def:LocalTimeSolution} coincides in this case, justifying definition \ref{Def:LocalTimeSolution} as more than just a limit object, but something that actually satisfies the equation in a reasonable sense.

\end{example}

\subsection{Local time solutions that are weak controlled solutions}

In this section we look at examples of $b$ and $c$ for which we can show that the local time in Definition \ref{Def:LocalTimeSolution} solutions are really solutions in the sense of Definition \ref{Def:WeakControlledSolution}.

We recall that we have to make sense of 
$$
\langle u(t),  \eta \rangle = - \int_{\R^d} u_0(y) \eta( \phi_t(y) ) |\nabla \phi_t(y)| \exp \{ \sum_{j=1}^J (-1)^{|\alpha_j| + 1} \int_{\R^d} \Lambda^{\phi(y), C_j}_{\alpha_j}(t,z)dz  \} dy 
$$
as a controlled rough path. 

When $c$ is a bounded function, the exponential term does not pose any problems: writing 
$$
\sum_{j=1}^J (-1)^{|\alpha_j| + 1} \int_{\R^d} \Lambda^{\phi(y), C_j}_{\alpha_j}(t,z)dz = \int_0^{t } c(r, \phi_r(y))dr 
$$ 
shows that this term Lipschitz in $t$, and so using Lemma \ref{scalarMultiplication} it is clear that this term can always be considered a controlled path. For this reason, we shall for the rest of this section assume for simplicity that $c= 0$. The extension to bounded $c$ is straightforward.

\subsubsection{One spatial dimension}

Consider the approximation $\phi_n(t,x)$ from section \ref{singularSDE}, i.e. we have $\phi_n(t,x) \rightarrow \phi_t(x)$ in $L^2(\Omega)$ and $\phi_n(t,\cdot) \rightarrow \phi_t$ weakly in $L^2(\Omega; W^{1,2}(U))$ (for simplicity we omit the subsequence). 

\begin{theorem} \label{Thm:OneDim}
Assume $b \in L^{\infty}( [0,T] \times \R) \cap L^1(\R^d; L^{\infty}([0,T]))$, $u_0 \in C^1(\R)$ such that $u_0' \in L^1(\R)$ and $H< \frac{1}{6}$. There exists a subset with full measure $\Omega^* \subset \Omega$, such that there exists a weak controlled solution to (\ref{roughTE}) w.r.t. every $B_{\cdot}(\omega)$ for $\omega \in \Omega^*$.

\end{theorem}

\begin{proof}
Define $u(t,x) := u_0(\phi_t^{-1}(x))$ and fix $\Omega_{\gamma}$ as in Proposition \ref{HolderConvergence}. We begin by showing that $\int_{\R} u(\cdot,x,\omega) \eta'(x) dx$ is controlled by $\B_{\cdot}(\omega)$ for every $\omega \in \Omega_{\gamma}$. It is enough to show
$$
\int_{\R} u(\cdot,x,\omega) \eta'(x) dx = - \int_{\R} u_0'(y) \eta(\phi_{\cdot}(y,\omega)) dy.
$$
To this end, note that for every $n$ we have
\begin{align*}
\int_{\R} u_n(t,x,\omega) \eta'(x) dx  = - \int_{\R} \nabla u_n(t,x,\omega) \eta(x) dx  & = - \int_{R} u_0' (\phi_n^{-1}(t,x,\omega))  \nabla\phi_n^{-1}(t,x,\omega) \eta(x) dx \\
& = - \int_{\R} u_0'(y) \eta(\phi_n(t,y,\omega)) dy.
\end{align*}
where we have used the change of variables $y = \phi_n^{-1}(t,x,\omega)$. Letting $n \rightarrow \infty$ we get $\eta(\phi_n(t,x,\omega)) \rightarrow \eta(\phi_t(x,\omega))$ and $u_n(t,x,\omega) \rightarrow u(t,x,\omega)$ from Remark \ref{inverseConvergence}. The desired equality holds from dominated convergence.

For every $n$ we have
\begin{align*} 
\int_{\R} u_n(t,x,\omega) \eta(x) dx  + \int_0^t \int_{\R} \nabla u_n(r,x,\omega) b_n(r,x) \eta(x) dx dr  - &  \int_0^t \int_{\R} u_n(t,x)  \eta( \phi_n(r,x,\omega)) dx  d \B_r (\omega)   \\
 & = \int_{\R} u_0(x) \eta(x)dx  
\end{align*}
where $\B_{st}(\omega)$ is the geometric lift of the fractional Brownian motion. We see that 
$$
\lim_{n \rightarrow \infty} \int_{\R} u_n(t,x,\omega) \eta(x) dx = \int_{\R} u(t,x,\omega) \eta(x) dx
$$
and 
$$
\lim_{n \rightarrow \infty} \int_0^t \int_{\R} u'_0(x) \eta( \phi_n(r,x,\omega))  dx d \B_r (\omega)  =  \int_0^t  \int_{\R} u'_0(x) \eta( \phi_r(x,\omega))  dx d \B_r (\omega) .
$$
where we have used Proposition \ref{summaryProposition} since $u_0' \in L^1(\R)$. Consequently, we must have that 
$$
\int_0^t \int_{\R} \nabla u_n(r,x,\omega) b_n(r,x) \eta(x) dx dr
$$ 
is converging. Now we get
\begin{align*}
\int_{\R} \nabla u_n(r,x,\omega) b_n(r,x) \eta(x) dx & = \int_{\R} \nabla u_n(r,x,\omega) (b_n(r,x) - b(r,x)) \eta(x) dx  + \int_{\R} \nabla u_n(r,x,\omega) b(r,x) \eta(x) dx .
\end{align*}

For the first term, from Remark \ref{inverseConvergence} we see that 
$$
\int_{\R} \nabla u_n(r,x,\omega) b(r,x) \eta(x) dx \rightarrow \int_{\R} \nabla u(r,x,\omega) b(r,x) \eta(x) dx,
$$
for all $r \in [0,T]$.

For the second term we take the expectation
\begin{align*}
E[|\int_{\R} \nabla u_n(r,x) (b_n(r,x) - b(r,x)) & \eta(x) dx|]  \leq  \int_{\R} E[ |\nabla u_n(r,x)| ] |b_n(r,x) - b(r,x)| |\eta(x)| dx \\
 \leq &  \sup_{y \in supp \eta} \left( E[ |\nabla u_n(r,y)|^2 ] \right)^{1/2} \int_{\R}  |b_n(r,x) - b(r,x)| |\eta(x)| dx \\
& \rightarrow 0 .
\end{align*}
which shows that there exists a subsequence 
$$
\int_0^t \int_{\R} \nabla u_{n_k}(r,x) b_{n_k}(r,x) \eta(x) dx dr \rightarrow \int_0^t \int_{\R} \nabla u(r,x) b(r,x) \eta(x) dx dr 
$$ 
as $k \rightarrow \infty$ on some set $\Omega_1$ which has full measure. The statement of the theorem is proved to be true if we let $\Omega^{*} = \Omega_{\gamma} \cap \Omega_1$.
\end{proof}

\subsubsection{Divergence of $b$ bounded}

When the divergence of $b$ is bounded, we can write
$$
|\nabla \phi_t(x) | = \exp \{ \int_0^t div b(r, \phi_r(x)) dr \} ,
$$
and so the mapping $t \mapsto |\nabla \phi_t(x) |$ is of bounded variation. Using Lemma \ref{scalarMultiplication} we can show the following result.

\begin{theorem}
Assume $b: [0,T] \times \R^d \rightarrow \R^d$ satisfies the assumptions of Proposition \ref{compactSequence} and has bounded divergence. Assume moreover that $u_0 :[0,T] \times \R^d \rightarrow \R$ is a bounded function. Then, if $u$ is a local time solution it is also a weak controlled solution. 

\end{theorem}

\begin{proof}
We need to show that $u(t,x) = u_0( \phi_t^{-1}(x))$ is a weak controlled solution. Using Lemma \ref{scalarMultiplication} and Lemma \ref{Lemma:FunctionComposition} it is clear that
$$
\langle u(t), \nabla \eta \rangle  = \int_{\R^d} u_0(y) \nabla \eta( \phi_t(y)) \exp \{ \int_0^t div b(r, \phi_r(y)) dr \} dy
$$
is controlled by $\B$. The proof follows the same lines as the proof of \ref{Thm:OneDim}, using Proposition \ref{compactSequence} to obtain strong $L^2(\Omega)$ convergence of $u_n(t,x)$ locally in $x$.
\end{proof}

\subsubsection{Time-homogenuous drift and smooth initial data}

When $b$ is time-homogenuous, we can write (see Remark \ref{inverseConvergence}) 
$$
\phi_t^{-1}(x) = x - \int_0^t b(\phi_r^{-1}(x)) dr - B_t .
$$
If now the initial condition is sufficiently regular, it is clear from Lemma \ref{Lemma:FunctionComposition} that $u_0( \phi_{\cdot}^{-1}(x))$ is controlled by $\B$.

\begin{theorem}
Assume $b: [0,T] \times \R^d \rightarrow \R^d$ satisfies the assumptions of Proposition \ref{compactSequence} and assume that $u_0 \in C_b^{ k} ( \R^d; \R) $ for some $k \geq  \lfloor \frac{1}{H} \rfloor$. Then, if $u$ is a local time solution it is also a weak controlled solution.  

\end{theorem}

\begin{proof}
Since $\mathscr{D}_{- \B}^{p (H-)} = \mathscr{D}_{\B}^{p (H-)}$, it is clear that
$$
\langle  u(t), \nabla \eta \rangle  = \int u_0( \phi_{t}^{-1}(x) ) \nabla \eta(x) dx
$$
is controlled by $\B$. The proof follows the same lines as the proof of \ref{Thm:OneDim}, using Proposition \ref{compactSequence} to obtain strong $L^2(\Omega)$ convergence of $u_n(t,x)$ locally in $x$.
\end{proof}

\newpage

\end{document}